\newtheorem{theorem}{Theorem}[section]
\newtheorem{lemma}[theorem]{Lemma}
\newtheorem{corollary}[theorem]{Corollary}
\newtheorem{proposition}[theorem]{Proposition}
\theoremstyle{definition}
\newtheorem{example}[theorem]{Example}
\newtheorem{definition}[theorem]{Definition}
\newtheorem{definition-lemma}[theorem]{Definition-Lemma}
\newtheorem{definition-theorem}[theorem]{Definition-Theorem}
\newtheorem{remark}[theorem]{Remark}
\newtheorem*{notation}{Notation}
\newcommand{\ac}{\textup{!`}}
\title[Batalin-Vilkovisky algebra structure]{Batalin-Vilkovisky algebra
structure on Poisson manifolds with diagonalizable modular symmetry}
\author{Xiaojun Chen}
\author{Leilei Liu}
\author{Sirui Yu}
\author{Jieheng Zeng}
\address{Chen, Yu: School of Mathematics, Sichuan University, Chengdu,
Sichuan Province, 610064 P.R. China}
\email{xjchen@scu.edu.cn, banaenoptera@163.com}
\address{Liu: School of Mathematics (Zhuhai), Sun Yat-sen University, Zhuhai,
Guangdong Province, 519082 P.R. China}
\email{liuleilei@mail.sysu.edu.cn}
\address{Zeng: School of Mathematics, Peking University, Beijing, 100871 P.R. China}
\email{zengjh662@163.com}
\date{}
\begin{document}

\begin{abstract}
We study the ``twisted" Poincar\'e duality
of smooth Poisson manifolds, and show that, if the modular vector field is diagonalizable, 
then there is a mixed complex
associated to the Poisson complex, which, combining with the twisted Poincar\'e duality,
gives a Batalin-Vilkovisky algebra structure on the Poisson
cohomology. This generalizes the previous results
obtained by Xu for unimodular Poisson manifolds.
We also show that the Batalin-Vilkovisky algebra structure is preserved under
Kontsevich's deformation quantization, and in the case of polynomial algebras
it is also preserved by Koszul duality.

\noindent\textbf{Keywords:}
modular vector field, Poincar\'e duality, Koszul duality, deformation quantization,
Batalin-Vilkovisky

\noindent\textbf{MSC 2020:} 53D17, 55D05, 17B63.
\end{abstract}

\maketitle

\setcounter{tocdepth}{1}\tableofcontents

%=================================================

\section{Introduction}

Let $X$ be a smooth, oriented Poisson manifold.
Let $A$ be the algebra of smooth functions on $X$.
The Poisson cohomology and homology of $A$, and hence of $X$,
were introduced by Lichnerowicz \cite{Lichnerowicz}
and Koszul \cite{Koszul} respectively. They were further
studied by, for example, Brylinski \cite{Brylinski}, and Xu \cite{Xu}.
In particular, Xu found that there is an obstruction for the existence
of the Poincar\'e duality between the Poisson cohomology and homology
of $X$. Such an obstruction lies in the first Poisson
cohomology of $X$, called the {\it modular class}, and is represented by the
modular vector field
of the Poisson structure.
If the obstruction vanishes, in which case $X$ is called {\it unimodular},
then we have the Poincar\'e duality on $X$.
As a corollary, he showed that there exists a Batalin-Vilkovisky algebra
structure on the Poisson cohomology, which is nontrivial in general,
in the sense that the Batalin-Vilkovisky operator
generates the Schouten bracket.

The purpose of this paper is to generalize Xu's result to a class of
Poisson manifolds with non-trivial modular class, and then to study
some algebraic structures, such as the Batalin-Vilkovisky algebra
structure among others, associated to them.

\subsection{Poincar\'e duality for Poisson manifolds}
In 1998, Van den Bergh
studied in \cite{VdB97} the Poincar\'e duality
problem for associative algebras.
For an associative algebras, say $A$,
Van den Bergh showed that if $A$ is homologically smooth, then
there is an isomorphism between the Hochschild cohomology
of $A$ and the Hochschild homology of $A$ with values
in $A$ tensor with its inverse dualizing complex.
If the inverse dualizing complex is trivial, in which case
the algebra is called {\it Calabi-Yau}, then we have the Poincar\'e
duality between the Hochschild cohomology and homology of $A$.

In some cases that we are interested in, the associative algebras, such as
the Artin-Schelter
regular (AS-regular for short) algebras, are not Calabi-Yau,
but are very close to be so.
Inspired by noncommutative differential geometry
Brown and Zhang studied the ``twisted" Hochschild homology of
an AS-regular algebra, say $A$, and showed that Van den Bergh's Poincar\'e duality
has the form (see \cite{BZ} and also \cite{RRZ})
$$\mathrm{HH}^\bullet(A)\cong\mathrm{HH}_{n-\bullet}(A, A_\sigma),$$
where $\mathrm{HH}^\bullet(A)$ is the Hochschild cohomology of $A$
while $\mathrm{HH}_{\bullet}(A, A_\sigma)$
is the Hochschild homology of $A$
with coefficients in $A$ twisted with its {\it Nakayama automorphism},
and $n$ is the global dimension of $A$.
In this case we say $A$ admits the {\it twisted}
Poincar\'e duality.

Going back to the Poisson algebra
case, the twisted Poincar\'e duality was first studied
by Launois and Richard \cite{LR} for some quadratic Poisson algebras,
which was later generalized by Zhu in \cite{Zhu} and Luo, Wang and Wu in \cite{LWW}.
In 2017 L\"u, Wang and Zhuang obtained in \cite{LWZ} the twisted Poincar\'e duality theorem
for Poisson Calabi-Yau affine varieties, which covers all the above cases.

\begin{theorem}[Theorem \ref{thm:twistedPD1};
see also \cite{LWW}]\label{thm:main1}
Let $X$ be a smooth and oriented Poisson $n$-manifold with a fixed
volume form. Let $A$ be the
ring of smooth functions on $X$ and $\nu$ be the modular vector field.
Let $A_\nu$ be $A$ itself twisted with $\nu$ (see Example \ref{ex:Poissonmodules}(2)
below for the precise definition).
Then we have the twisted Poincar\'e duality
$$\mathrm{HP}^\bullet(A)\cong\mathrm{HP}_{n-\bullet}(A, A_\nu),$$
where $\mathrm{HP}^\bullet(-)$ and $\mathrm{HP}_\bullet(-)$
are the Poisson cohomology and homology functors.
\end{theorem}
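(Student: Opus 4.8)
The plan is to realise both sides by explicit geometric complexes and to build, out of the fixed volume form, a single degree-reversing isomorphism between them; the only content then is that the discrepancy between the two differentials is governed exactly by the modular vector. First I would fix the cochain side: the Poisson cohomology $\mathrm{HP}^\bullet(A)$ is the cohomology of the Lichnerowicz complex $(\mathfrak{X}^\bullet(X), d_\pi)$, where $\mathfrak{X}^\bullet(X)=\Gamma(\wedge^\bullet TX)$ and $d_\pi=[\pi,-]$ is the Schouten bracket with the Poisson bivector $\pi$. On the homology side I would unwind the definition of the twisted module $A_\nu$ from Example \ref{ex:Poissonmodules} and check that $\mathrm{HP}_\bullet(A,A_\nu)$ is computed by $(\Omega^\bullet(X),\partial_\pi^\nu)$, where $\partial_\pi^\nu=\partial_\pi-\iota_\nu$ is the Koszul--Brylinski operator $\partial_\pi=[\iota_\pi,d]$ corrected by contraction with the modular vector field $\nu$. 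That $(\partial_\pi^\nu)^2=0$ will follow once the two differentials are matched below, or can be checked directly from $[\nu,\pi]=0$.

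Next I would introduce the duality map. Since $X$ is oriented with a fixed volume form $\mu$, contraction with $\mu$ gives an $A$-linear, degreewise invertible map $\star\colon \mathfrak{X}^k(X)\xrightarrow{\ \sim\ }\Omega^{n-k}(X)$, $P\mapsto\iota_P\mu$. Because $\star$ reverses degree as $k\mapsto n-k$, the entire theorem reduces to the single operator identity
\[
\star\circ d_\pi=\pm\,\partial_\pi^\nu\circ\star .
\]
The strategy to prove this is to route everything through the divergence operator $\Delta=\operatorname{div}_\mu:=\star^{-1}\circ d\circ\star$ on multivector fields. Two standard facts then drive the computation: (i) $\Delta$ is a Batalin--Vilkovisky (order-two) operator generating the Schouten bracket, so that $[\pi,P]=\pm\big(\Delta(\pi\wedge P)-(\Delta\pi)\wedge P-\pi\wedge\Delta P\big)$; and (ii) the modular vector field coincides, up to sign, with $\nu=\Delta\pi$, which is precisely its characterisation as the $\mu$-divergence of the Hamiltonian vector fields. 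Transporting the Koszul operator by $\star$ and using $\iota_\pi\circ\star=\pm\star\circ(\pi\wedge-)$ together with $\iota_\nu\circ\star=\pm\star\circ(\nu\wedge-)$, the generating identity (i) turns $\star^{-1}\partial_\pi\star$ into $d_\pi+\nu\wedge(-)$, and the correction term $\iota_\nu$ cancels the $\nu\wedge(-)$ summand. This yields the displayed intertwining, and hence the chain isomorphism giving $\mathrm{HP}^\bullet(A)\cong\mathrm{HP}_{n-\bullet}(A,A_\nu)$.

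The main obstacle is the sign- and order-careful verification of $\star\circ d_\pi=\pm\,\partial_\pi^\nu\circ\star$: one must establish rigorously that $\Delta=\star^{-1}d\,\star$ is a genuine generator of the Schouten bracket and that $\Delta\pi$ is the modular vector field, which requires Cartan calculus relating $d$, $\iota_\pi$, Lie derivatives and the divergence, together with bookkeeping of the Koszul signs coming from the order-reversal $\iota_{P\wedge Q}=\iota_Q\iota_P$. A secondary but essential point is to confirm that the corrected operator $\partial_\pi^\nu$ arising this way is \emph{exactly} the differential of the Poisson homology complex with values in the twisted module $A_\nu$, not merely isomorphic to it; this is where the definition in Example \ref{ex:Poissonmodules} enters, and where the hypothesis that $\nu$ is a true modular vector (a Poisson vector field, $[\nu,\pi]=0$) guarantees both $(\partial_\pi^\nu)^2=0$ and the well-definedness of the twisted structure. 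Once these are in place, the isomorphism on (co)homology is immediate.
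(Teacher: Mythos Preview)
Your proposal is correct and follows essentially the same route as the paper: both take the contraction $\iota_{(-)}\eta\colon\mathfrak X^\bullet\to\Omega^{n-\bullet}$ with the volume form as the candidate isomorphism, and both reduce the theorem to the single operator identity intertwining $\delta^\pi$ with the twisted boundary $\partial_\nu=\partial+\iota_\nu$, with the modular vector $\nu=-\mathrm{Div}(\pi)$ appearing as precisely the correction term. The only difference in packaging is that you obtain the intertwining by invoking the Batalin--Vilkovisky generating identity for $\Delta=\star^{-1}d\,\star$, whereas the paper (Lemma~\ref{TwistedPoincare}) uses directly the Cartan-type relation $(-1)^{|\varphi|-1}\partial(\varphi\cap\eta)+\varphi\cap(\partial\eta)=\delta(\varphi)\cap\eta$ together with a short computation showing $\nu\cap(\varphi\cap\eta)=-\varphi\cap(\partial\eta)$; the subsequent check that $\partial+\iota_\nu$ agrees with the differential of $\mathrm{CP}_\bullet(A,A_\nu)$ is done exactly as you outline.
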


\subsection{The Batalin-Vilkovisky algebra structure}
For an associative algebra $A$,
the Hochschild complex $(\mathrm{CH}_\bullet(A), b, B)$
is a mixed complex, where $b$ is the Hochschild differential
and $B$ is Connes' cyclic operator.
But for algebras
with an automorphism $\sigma$ such as
AS-regular algebras as above,
the Hochschild complex that we are interested in
is $\mathrm{CH}_\bullet(A, A_\sigma)$, which does not
admit a mixed complex structure since Connes' cyclic operator does
not commute with $b$ unless $\sigma=\mathrm{Id}$
(that is, $A$ is Calabi-Yau).
Nevertheless, there is a special class
of AS-regular algebras which do have a mixed complex structure on its
(sub but homotopy equivalent) twisted Hochschild complex.
These are the AS-regular algebras whose Nakayama automorphism
is {\it diagonalizable}. In this
case Kowalzig and Kr\"ahmer showed in \cite{KK} that
these algebras share even more features of Calabi-Yau algebras; for example,
their Hochschild cohomology has a nontrivial Batalin-Vilkovisky algebra structure
(in the Calabi-Yau case this
is proved by Ginzburg in \cite{Ginzburg}).
Batalin-Vilkovisky algebra arose from physics,
especially from string field theory and
topological conformal field theory (see, for example,
\cite{Getzler1}). They have been widely studied
in recent years, by both physicists and mathematicians.

Going back to the Poisson algebra case, the situation is similar.
Suppose $A$ is a Poisson algebra with
nontrivial modular vector field $\nu$, then
in general the twisted Poisson complex
$(\mathrm{CP}_{\bullet}(A, A_\nu), \partial, d)$
is not a mixed complex, where $\partial$ is the Poisson boundary
and $d$ is the de Rham differential.
If we view the modular vector field as the infinitesimal version of the Nakayama automorphism,
then in the diagonalizable case,
we again have a mixed complex structure on the (sub but homotopy equivalent)
twisted Poisson complex.
Together with the twisted Poincar\'e duality,
the pair
$(\mathrm{HP}^\bullet(A), \mathrm{HP}_{n-\bullet}(A, A_\nu))$
form the so-called {\it differential calculus with duality}
(a notion introduced by Lambre \cite{Lambre} based on Tamarkin and Tsygan \cite{TT}),
which leads to the following theorem and generalizes Xu's result in \cite{Xu}
where only unimodular Poisson manifolds are considered.

\begin{theorem}[Theorems \ref{thm:BVforPoisson}]
Let $X$ be a smooth and oriented Poisson $n$-manifold
with diagonalizable modular vector field.
Let $A$ the algebra of smooth functions on $M$.
Then $\mathrm{HP}^\bullet(A)$
has a Batalin-Vilkovisky algebra structure, whose Batalin-Vilkovisky
operator generates the Schouten bracket on
$\mathrm{HP}^\bullet(A)$.
\end{theorem}

\subsection{Koszul duality, and deformation quantization}

For a quadratic Poisson polynomial algebra,
Shoikhet \cite{Shoikhet} showed that its Koszul dual is graded Poisson
and Tamarkin's deformation quantizations of these two Poisson algebras, one
is AS-regular and the other is Frobenius,
are again Koszul dual to each other
as graded associative algebras.
Later this result is proved to be true for
Kontsevich's deformation quantization by Calaque et al. \cite{Calaque+}.

On the other hand, for an arbitrary
Poisson polynomial algebra, Dolgushev
\cite{Dolgushev}
proved that its deformation quantization is an
AS-regular
algebra; in particular,
if the Poisson algebra is unimodular, then its deformation quantization is
Calabi-Yau.

Based on these results among others, it is shown in \cite{CCEY,CEL} that for a
unimodular Poisson algebra
$A=\mathbb R[x_1,\cdots,x_n]$, if we denote
by $A^!$ the Koszul dual algebra of $A$,
and by $A_\hbar$ and $A^!_\hbar$ the
deformation quantizations of $A$ and $A^!$ respectively,
then the following diagram
$$
\xymatrixcolsep{3pc}
\xymatrix
{
\mathrm{HP}^\bullet(A[\![\hbar]\!])\ar[r]^\cong\ar[d]^\cong&\mathrm{HP}^\bullet(A^![\![\hbar]\!])\ar[d]^\cong\\
\mathrm{HH}^\bullet(A_\hbar)\ar[r]^\cong&\mathrm{HH}^\bullet(A^!_\hbar)
}$$
is commutative as isomorphisms
of Batalin-Vilkovisky algebras.

In this paper we show that the above result remains true
if the modular vector field of the Poisson algebra is
diagonalizable.

\begin{theorem}[Theorems \ref{thm:BViso}]\label{thm:main3}
Let $A=\mathbb R[x_1,\cdots, x_n]$ be a Poisson algebra
with diagonalizable modular vector field.
Let $A^!$ be the Koszul dual of $A$, and let $A_\hbar$ and $A^!_\hbar$
be the deformation quantization of $A$ and $A^!$ respectively. Then
the following
$$
\xymatrixcolsep{3pc}
\xymatrix
{
\mathrm{HP}^\bullet(A[\![\hbar]\!])\ar[r]^{\cong}\ar[d]^{\cong}&\mathrm{HP}^\bullet(A^![\![\hbar]\!])\ar[d]^{\cong}\\
\mathrm{HH}^\bullet(A_\hbar)\ar[r]^{\cong}&\mathrm{HH}^\bullet(A^!_\hbar)
}$$
is a commutative diagram of isomorphisms
of Batalin-Vilkovisky algebras.
\end{theorem}

The Batalin-Vilkovisky algebra
structures for AS-regular
algebras and for Frobenius algebras with diagonalizable Nakayama were
independently proved by Kowalzig and Kr\"ahmer \cite{KK}
and Lambre, Zhou and Zimmermann \cite{LZZ}
respectively; their isomorphism in the Koszul
case was proved by \cite{Liu}.
We recently learned that Wang, Wu, Zhou and Zhu
have obtained the same Batalin-Vilkovisky algebra structure
for Poisson algebras with a diagonalizable
 vector vector field
in an unpublished manuscript \cite{WWZZ}.
What is new in above theorem is
that we study these algebraic structures in the category of Poisson algebras,
and relate them via
deformation quantization; it also answers a question
raised in \cite[\S7.3]{CCEY} where the authors asked
whether these algebraic structures exist
for Poisson algebras admitting the twisted Poincar\'e duality.

The rest of the paper is devoted to the proof of the
above theorems. It is organized as follows.
In \S\ref{sect:modularsymmetry} we study with some details the modular vector
field of Poisson algebras and then study the twisted Poincar\'e duality for Poisson manifolds.
In \S\ref{sect:BVstructure} we study the Batalin-Vilkovisky algebra structure on the Poisson cohomology of Poisson
algebras with diagonalizable modular vector field.
In \S\ref{sect:deformationquantization}
we show that the Batalin-Vilkovisky
algebra structure is preserved under deformation quantization.
In \S\ref{sect:FrobPoisson} we study the Koszul duality of quadratic Poisson algebras,
which are Frobenius Poisson algebras; we then study their twisted Poincar\'e duality
as well as their deformation quantization.
In \S\ref{sect:PDKDDQ} we combine the above results and show
Theorem \ref{thm:main3}. In \S\ref{subsect:gravity} we also discuss an algebraic structure
(the gravity algebra) on the negative cyclic homology of Poisson algebras
with diagonalizable modular vector fields.

\begin{notation} Throughout this paper,
$k$ denotes a field of characteristic $0$. 
All tensors and Homs are over $k$ unless otherwise specified.
All algebras (resp. coalgebras) are unital and augmented
(resp. co-unital and co-augmented) over $k$.
A Poisson algebra $A$ with the Poisson structure $\pi$ is
denoted by $(A,\pi)$, or by $(A, \cdot, \{-,-\})$.
If $A$ is an associative algebra,
then $A^{op}$ is its opposite and
$A^e=A\otimes A^{op}$ is its envelope.
All complexes are graded such that the differential has degree $-1$;
for a cochain complex, it is viewed as a chain complex by negating the grading,
and it is cohomology $\mathrm{H}^\bullet(-)$ is given by $\mathrm{H}_{-\bullet}(-)$ of its negation.
\end{notation}

\newtheorem*{ack}{Acknowledgements}
\begin{ack}
 The authors are grateful
to Farkhod Eshmatov for many helpful conversations and to
IASM, Zhejiang University for offering an excellent working
condition during the preparation of the
paper. The first author also thanks G. Zhou for helpful communications
and for sharing their unpublished manuscript \cite{WWZZ}.
This paper is supported by NSFC (Nos. 11890660 and 11890663).
\end{ack}

\section{Modular class and the Poincar\'e duality}\label{sect:modularsymmetry}

In this section, we briefly go over the modular vector fields for Poisson
algebras, and discuss twisted Poincar\'e duality for Poisson manifolds.
The main result of this section is Theorem \ref{thm:twistedPD1}.

\begin{definition}\label{def:Poissonmod}
Suppose $A$ is a Poisson $k$-algebra.
A {\it left Poisson $A$-module}
is a $k$-vector space $M$ endowed with two bilinear maps
$\cdot$ and $\{-,-\}_M: A\otimes M\rightarrow M$ such that
\begin{enumerate}
\item[(1)] $(M, \cdot)$ is a left module over the commutative algebra $A$;
\item[(2)] $( M, \{-,-\}_M )$ is a left module over the Lie algebra $(A, \{-,-\})$;
\item[(3)] $\{a, bx \}_M=\{a,b\}\cdot x+b\cdot \{a,x\}_M$  for any $a, b \in A$ and $x \in M$;
\item[(4)] $\{ab,x\}_M=a\cdot \{b,x\}_M+ b\cdot\{a,x\}_M$  for any $a, b \in A$ and $x \in M$.
\end{enumerate}
\end{definition}

The notion of right Poisson $A$-module is defined similarly, and is left to the reader.
A left Poisson $A$-module is not necessarily a right Poisson $A$-module; however,
for a right Poisson $A$-module $M$, if we denote its Lie action by $\{-,-\}_M$, then
it may be equipped with a left Poisson $A$-module, whose Lie action
is given by $a\otimes m\mapsto -\{m, a\}$, for all $a\in A$ and $m\in M$, and vice versa.
A {\it Poisson $A$-bimodule} is both a left and a right Poisson $A$-module
such that $\{a, m\}_M=-\{m, a\}_M$ for all $a\in A$ and $m\in M$.
In particular, $A$ itself is automatically a Poisson $A$-bimodule.

\begin{example}\label{ex:Poissonmodules}
(1) Suppose $M$ is a right (and respectively left) Poisson module over $A$.
Then its linear dual space $M^*:=\mathrm{Hom}_k(M, k)$ has a left (and respectively right)
Poisson module structure over $A$, with the dot product and the bracket adjoint to the product
and the bracket on $M$.
In particular, $A^*:=\mathrm{Hom}_k(A, k)$ is
both a right and a left Poisson $A$-module (in fact, a Poisson $A$-bimodule).

(2) Suppose $(M, \cdot, \{-,-\}_M)$ is a right Poisson $A$-module.
Let $\nu\in\mathfrak{X}^1(A)$ be a Poisson derivation; that is, a derivation of $A$
which commutes with the Poisson structure.
Define a new bracket $\{-,-\}_{M_\nu}: M\otimes A\rightarrow M$ by
\begin{equation}\label{eq:twistedbracket}
\{m,a\}_{M_{\nu}}=\{m,a\}_M+m\cdot \nu(a),
\end{equation}
for all $a\in A, m\in M$.
Then $(M,\cdot, \{-,-\}_{M_\nu})$ is again a right Poisson $A$-module,
called the {\it twisted Poisson $A$-module} twisted by the Poisson derivation $\nu$;
in what follows, we denote it by $M_\nu$.
Similarly, for a left Poisson $A$-module, we denote
the corresponding twisted Poisson $A$-module by $_{\nu}M$.
\end{example}

\begin{definition}[Lichnerowicz \cite{Lichnerowicz}]
Suppose $(A,\pi)$ is a Poisson algebra and $M$ is a left Poisson $A$-module.
Let
$
\mathfrak X^{p}_A(M)
$
be the space of skew-symmetric multilinear maps
$A^{\otimes p}\to M$ that are derivations in each argument; that is, the space
of $p$-th polyvectors on $A$ with values in $M$.
The {\it Poisson cochain complex} of $A$ with values in $M$,
denoted by $\mathrm{CP}^\bullet(A, M)$,
is the cochain complex
$$
\xymatrix{
M=\mathfrak X^0_A(M)\ar[r]^-{\delta^\pi}&\cdots\ar[r]&\mathfrak X^{p-1}_A(M)\ar[r]^-{\delta^\pi}&
\mathfrak X^{p}_A(M)\ar[r]^-{\delta^\pi}&\mathfrak X^{p+1}_A(M)\ar[r]^-{\delta^\pi}&\cdots
}$$
where
$\delta^\pi$ is given by
\begin{eqnarray*}
\delta^\pi(P)(f_0, f_1,\cdots, f_p)&:=&\sum_{0\le i\le p}(-1)^i\{f_i, P(f_0,\cdots, \widehat{f_i},\cdots, f_p)\}\\
&+&\sum_{0\le i<j\le p}(-1)^{i+j}P(\{f_i, f_j\}, f_0, \cdots, \widehat{f_i},\cdots, \widehat{f_j},\cdots, f_p),
\end{eqnarray*}
and $\widehat{\;}$ means the corresponding item is omitted.
The associated cohomology is called the {\it Poisson cohomology} of $A$ with values in $M$,
and is denoted
by $\mathrm{HP}^\bullet(A; M)$.
In particular, if $M=A$, then the cohomology is just called the {\it Poisson cohomology} of $A$, and is simply
denoted by $\mathrm{HP}^\bullet(A)$.
\end{definition}

\begin{definition}[Koszul \cite{Koszul}]
Suppose $(A,\pi)$ is a Poisson algebra and $N$ is a right Poisson $A$-module.
Denote by $\Omega^p_A(N)$ the set of $p$-th K\"ahler differential forms of $A$ with coefficients in $N$.
Then the  Poisson chain complex of $A$ with coefficients in $N$, denoted by $\mathrm{CP}_\bullet(A, N)$,
is
\begin{equation}
\xymatrix{
\cdots\ar[r]&\Omega^{p+1}_A(N)\ar[r]^{\partial_\pi}&\Omega^{p}_A(N)\ar[r]^{\partial_\pi}
&\Omega^{p-1}_A(N)\ar[r]^{\partial_\pi}&\cdots\ar[r]&\Omega^0_A(N)=N,
}
\end{equation}
where $\partial_\pi$ is given by
\begin{eqnarray*}
\partial_\pi(n\otimes df_1\wedge \cdots\wedge df_p)&=&\sum_{i=1}^p(-1)^{i-1}\{n, f_i\}_N\otimes
df_1\wedge\cdots \widehat{df_i}\cdots \wedge df_p\\
&+&\sum_{1\le i<j\le p}(-1)^{j-i}n\otimes
d\{f_i,f_j\}\wedge df_1\wedge\cdots \widehat{df_i}\cdots\widehat{df_j}\cdots \wedge df_p,
\end{eqnarray*}
where $n\in N$ and $f_1,\cdots, f_n\in A$.
The associated homology is called the {\it Poisson homology} of $A$ with coefficients
in $N$, and is denoted by
$\mathrm{HP}_\bullet(A, N)$. In particular, if $N=A$,
then the homology is just called the {\it Poisson homology} of $A$, and is simply
denoted by $\mathrm{HP}_\bullet(A)$.
\end{definition}

In what follows, if $\pi$ is clear from the text, we simply write
$\delta^\pi$ and $\partial_\pi$ as $\delta$ and $\partial$ respectively.
It should be noted that in both definitions, $\delta^\pi$ and $\partial_\pi$
are in fact the Lie derivative $L_\pi$ of $\pi$.
Suppose $A=\mathscr C^{\infty}(X)$ is the Poisson algebra
of the smooth functions on a smooth Poisson manifold,
or of the algebraic functions on a Poisson affine variety,
then $\mathrm{HP}^\bullet(A)$ and $\mathrm{HP}_\bullet(A)$
are Poisson invariants of $X$.

Suppose $\nu\in\mathfrak X^1(A)$ is a Poisson derivation,
then the chain complex $\mathrm{CP}_\bullet(A, A_\nu)$
has the same underlying vector space as $\mathrm{CP}_\bullet(A, A)$
but with the boundary,
which we now denote by $\partial_\nu$ in order to distinguish,
now becomes
\begin{eqnarray}
\partial_\nu(f_0\otimes df_1\wedge\cdots \wedge df_n)
&=&\partial(f_0\otimes df_1\wedge\cdots\wedge df_n)\nonumber\\
&+&\sum_{i=1}^n(-1)^{i-1}f_0\nu(f_i)\otimes df_1\wedge\cdots
\widehat{df_i}\cdots\wedge df_n,\label{formula:twistedboundary}
\end{eqnarray}
where $\partial$ is the boundary on $\mathrm{CP}_\bullet(A,A)$.

Now suppose we have an $n$-form
$\eta\in\Omega^n(A)$ such that the contraction
$$
\iota_{(-)}\eta:\mathfrak X_A^\bullet(A)\to\Omega_A^{n-\bullet}(A),\quad
X\mapsto\iota_X\eta
$$
is an isomorphism, then we say $\eta$ is a {\it volume form} of degree $n$.
If such a form $\eta$ exists, then we have the following diagram
\begin{equation}\label{diag:unimodularPoisson}
\begin{split}
\xymatrixcolsep{3pc}
\xymatrix{
\mathfrak{X}^\bullet_A(A)\ar[r]^-{\iota_{(-)}\eta}_{\cong}\ar[d]_{\delta}&\Omega_A^{n-\bullet}(A)\ar[d]^{\partial}\\
\mathfrak{X}^{\bullet+1}_A(A)\ar[r]^-{\iota_{(-)}\eta}_{\cong}&\Omega_A^{n-\bullet-1}(A),
}
\end{split}
\end{equation}
which is not necessarily commutative, since $\eta$ may not be a Poisson cycle.
To adjust this discrepancy, let us consider the following
commutative diagram
\begin{equation}\label{diag:unimodularPoisson111}
\begin{split}
\xymatrixcolsep{3pc}
\xymatrix{
\mathfrak{X}^\bullet_A(A)\ar[r]^-{\iota_{(-)}\eta}_{\cong}\ar[d]_{\mathrm{Div}}
&\Omega_A^{n-\bullet}(A)\ar[d]^{d}\\
\mathfrak{X}^{\bullet-1}_A(A)\ar[r]^-{\iota_{(-)}\eta}_{\cong}&\Omega_A^{n-\bullet+1}(A),
}
\end{split}
\end{equation}
where $\mathrm{Div}$ is the divergence operator.
Then
\begin{equation}\label{def:nu}
\nu:=-\mathrm{Div}(\pi)
\end{equation}
is a vector field,
and is called the {\it modular vector field} for $A$.
With these notations, we have the following proposition, which is due to Xu
(see \cite[Proposition 4.7]{Xu}):

\begin{proposition}[Xu]\label{TwistedPoincare}
Suppose $(A, \pi)$ is a Poisson algebra and $\eta$ is a volume form.
Then for any $\varphi\in \mathfrak{X}_A^p(A)$, we have
\begin{equation}\label{eq:Xusidentity}
(-1)^{\vert\varphi\vert-1}\partial (\iota_{\varphi}\eta)
-\iota_{\delta (\varphi)}\eta
=
\iota_\nu(\iota_\varphi\eta).
\end{equation}
\end{proposition}

\begin{proof}
On one hand, if we denote $\dagger:=\iota_{(-)}\eta$, then
\begin{equation}\label{eq:nu}
\iota_{\nu}(\iota_{\varphi}\eta)
=\iota_{\varphi}(\iota_\nu\eta)
\stackrel{\eqref{def:nu}}=
\iota_{\varphi}(\dagger(-\dagger^{-1}\circ d\circ\dagger(\pi)))
=-\iota_{\varphi}(d\circ\dagger(\pi))=-\iota_{\varphi}d\circ \iota_\pi\eta
=-\iota_\varphi\partial \eta,
\end{equation}
where the last equality holds
due to the Cartan formula $L_\pi=[\iota_\pi, d]$
and $\eta$ being $d$-closed.

On the other hand, we always have the equality
\[(-1)^{\vert\varphi\vert-1}\partial(\iota_{\varphi} \eta)
-\iota_{\delta(\varphi)}\eta=
-\iota_\varphi\partial\eta\]
Plugging \eqref{eq:nu} into the above identity, we get the desired equality.
\end{proof}

As an immediate corollary, we have the following ``twisted Poincar\'e duality":

\begin{theorem}[see also\cite{LWZ}]\label{thm:twistedPD1}
Let $A$ be a Poisson algebra with a volume form of degree $n$. Then
\begin{eqnarray}\label{eq:twistedPD1}
\mathrm{HP}^\bullet(A)\cong\mathrm{HP}_{n-\bullet}(A,A_\nu),
\end{eqnarray}
which is called the $\bm{twisted}$ Poincar\'e duality of $A$.
In particular, if $A$ is the set of smooth
functions on a smooth and oriented Poisson manifold,
or the set of algebraic functions of a Poisson Calabi-Yau affine
variety, then \eqref{eq:twistedPD1} holds.
\end{theorem}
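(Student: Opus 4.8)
The plan is to upgrade the contraction isomorphism $\dagger := \iota_{(-)}\eta \colon \mathfrak{X}^\bullet_A(A) \to \Omega^{n-\bullet}_A(A)$ from an isomorphism of graded $A$-modules (which it is, by the very definition of a volume form) into an isomorphism of chain complexes, where the right-hand side carries the \emph{twisted} Poisson boundary coming from the module $A_\nu$. The whole point of the module twist in Example \ref{ex:Poissonmodules} is that it absorbs precisely the discrepancy measured by Lemma \ref{TwistedPoincare}.

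First I would write out the boundary $\partial_\pi$ of $\mathrm{CP}_\bullet(A, A_\nu)$ explicitly. Substituting the twisted bracket $\{m,f\}_{A_\nu} = \{m,f\} + m\cdot\nu(f)$ from \eqref{eq:twistedbracket} into the defining formula for $\partial_\pi$, the first sum splits as the untwisted boundary plus $\sum_i (-1)^{i-1} m\,\nu(f_i)\otimes df_1\wedge\cdots\widehat{df_i}\cdots\wedge df_p$. Since $\iota_\nu\,df_i = \nu(f_i)$, this extra term is exactly $m\cdot\iota_\nu(df_1\wedge\cdots\wedge df_p)$, so on $\mathrm{CP}_\bullet(A,A_\nu)$ we obtain the clean identity $\partial^\nu = \partial + \iota_\nu$, where $\iota_\nu$ is contraction by the modular vector acting on the de Rham factor.

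Next I would feed this into Lemma \ref{TwistedPoincare}. Using the commutation rule $\iota_\varphi\iota_\nu\eta = (-1)^{|\varphi|}\iota_\nu\iota_\varphi\eta$ for the $1$-vector $\nu$ against a $p$-vector $\varphi$, the correction term rewrites as $\nu(\varphi\cap\eta)=\varphi\cap\iota_\nu\eta = (-1)^{|\varphi|}\iota_\nu(\dagger\varphi)$, and the lemma collapses to the single relation $\dagger(\delta\varphi) = (-1)^{|\varphi|-1}(\partial+\iota_\nu)(\dagger\varphi) = (-1)^{|\varphi|-1}\partial^\nu(\dagger\varphi)$. This is the \emph{same} sign that already appears between $\delta$ and $\partial$ in the unimodular Poincar\'e duality, so it is removed by the standard degree-dependent sign twist: setting $\dagger' := (-1)^{p(p+1)/2}\dagger$ on $\mathfrak{X}^p_A(A)$ makes $\dagger'\circ\delta = \partial^\nu\circ\dagger'$ on the nose. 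Being a sign-adjusted contraction against a volume form, $\dagger'$ is bijective in each degree, hence an isomorphism of complexes from $(\mathfrak{X}^\bullet_A(A),\delta)$ to $(\mathrm{CP}_{n-\bullet}(A,A_\nu),\partial^\nu)$; passing to (co)homology yields $\mathrm{HP}^\bullet(A)\cong\mathrm{HP}_{n-\bullet}(A,A_\nu)$, which is \eqref{eq:twistedPD1}. For the final clause, an oriented Poisson $n$-manifold has a nowhere-vanishing top form and a Poisson Calabi-Yau affine variety has a trivializing section of its canonical bundle; either is a volume form in the present sense, so the general statement applies directly.

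The step I expect to be the main obstacle is the sign bookkeeping: confirming the commutation rule $\iota_\varphi\iota_\nu\eta = (-1)^{|\varphi|}\iota_\nu\iota_\varphi\eta$ used silently in the first line of the lemma's proof, and checking that the degree-dependent sign $(-1)^{p(p+1)/2}$ indeed converts the weighted relation $\dagger\delta = (-1)^{p-1}\partial^\nu\dagger$ into a strict chain map in \emph{every} degree simultaneously. Nothing here is conceptually deep once $\partial^\nu$ is recognized as $\partial+\iota_\nu$, but the signs must be tracked consistently for $\dagger'$ to intertwine the two differentials on the nose.
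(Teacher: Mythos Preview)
Your proposal is correct and follows essentially the same route as the paper: both arguments reduce Lemma~\ref{TwistedPoincare} to a chain-level intertwining relation by first verifying the identity $\partial_\nu = \partial + \iota_\nu$ on $\mathrm{CP}_\bullet(A,A_\nu)$ via the explicit computation you outline. The only difference is cosmetic: the paper is content to observe that the relation $\dagger\circ\delta = (-1)^{p-1}\partial_\nu\circ\dagger$ already sends cocycles to cycles and coboundaries to boundaries, whereas you go the extra step of inserting the sign $(-1)^{p(p+1)/2}$ to obtain a strict chain map; both yield the same isomorphism on (co)homology.
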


\begin{proof}
In the light of \eqref{diag:unimodularPoisson}
and \eqref{eq:Xusidentity}, we only need to show
$$
(\partial+\iota_\nu)(\omega)=\partial_\nu(\omega),
$$
for any $\omega\in\Omega^\bullet(A)$.
This is a tautology by \eqref{formula:twistedboundary}.

Now, since for $A=\mathscr C^\infty(X)$
of a smooth and oriented Poisson manifold, or $A=\mathscr O(X)$
of a Poisson Calabi-Yau affine variety, then the volume form of $X$ (or say on $A$)
always exists, by the above argument, the theorem now follows.
\end{proof}

\begin{remark}
For a smooth and oriented Poisson manifold $X$,
the modular vector field $\nu$ for $A=\mathscr C^\infty(X)$
is a Poisson 1-cocycle, and the cohomology class
it represents does not depend on the choice of the volume form,
and hence is a topological invariant of the Poisson manifold,
which is usually called the {\it modular class} of $X$ (see \cite{Xu}
for more details).
For Poisson Calabi-Yau affine varieties, if we change the volume
form up to a unit, then the modular vector fields differ by
a {\it log-Hamiltonian derivation} (see \cite{Dolgushev} for more details).
\end{remark}

\begin{remark}\label{rmk:history}
Xu first studied the Poincar\'e duality
for Poisson manifolds (see \cite{Xu}),
where he showed that for a unimodular Poisson
manifold $X$,
$\mathrm{HP}^\bullet(X)\cong\mathrm{HP}_{n-\bullet}(X)$.
Later Launois and Richard in \cite{LR},
Zhu in \cite{Zhu}, and Luo, Wang and Wu in \cite{LWW}
studied the twisted Poincar\'e duality for some special types of Poisson
algebras. All these results are covered by the result
of L\"u, Wang and Zhuang  \cite[Corollary 4.4]{LWZ},
which deals with arbitrary Poisson Calabi-Yau affine varieties, with
a slightly different proof.
\end{remark}

\section{Modular vector fields and the Batalin-Vilkovisky structure}\label{sect:BVstructure}

Xu proved in \cite{Xu} that for unimodular
Poisson algebras, there exists
a Batalin-Vilkovisky algebra structure on its cohomology.
In this section we generalize this result to
Poisson algebras with {\it diagonalizable} modular vector fields.

\begin{definition}[Batalin-Vilkovisky algebra]
Suppose $(V,\bullet )$
is a graded commutative algebra. A {\it Batalin-Vilkovisky
algebra} structure on $V$ is the triple $(V,\bullet , \Delta)$
such that
\begin{enumerate}
\item[$(1)$] $\Delta: V^{i}\to V^{i-1}$ is a differential, that is, $\Delta^2=0$; and
\item[$(2)$] $\Delta$ is second order operator,
  that is,
 \begin{eqnarray*}
 \Delta(a\bullet  b\bullet  c)
 &=& \Delta(a\bullet  b)\bullet  c+(-1)^{|a|}a\bullet \Delta(b\bullet  c)+(-1)^{(|a|-1)|b|}b\bullet \Delta(a\bullet  c)\nonumber \\
 & & - (\Delta a)\bullet  b\bullet  c-(-1)^{|a|}a\bullet (\Delta b)\bullet  c-(-1)^{|a|+|b|}a\bullet  b\bullet  (\Delta c).
\end{eqnarray*}
\end{enumerate}
\end{definition}

In the above definition,
if we set
$$\{-,-\}: V\otimes V\to V,\; (a,b)\mapsto (-1)^{|a|}(\Delta(a\bullet b)-\Delta(a)\bullet
b-(-1)^{|a|}a\bullet \Delta(b)),\quad a, b\in V$$
then it is direct to check that
$(V,\cup,\{-,-\})$ is a Gerstenhaber algebra (see Definition \ref{def:Gerstenhaber}
below),
and we say the Gerstenhaber bracket $\{-,-\}$ is generated by
the Batalin-Vilkovisky operator $\Delta$ (see Getzler \cite{Getzler1} for more details).

Lambre observed in \cite{Lambre} that a lot of examples of Batalin-Vilkovisky
algebras come from the structure of differential calculus, in the sense of Tamarkin and Tsygan
\cite{TT}, with some additional conditions. Let us recall his result first.

\subsection{Differential calculus and the Batalin-Vilkovisky algebra}

We start with the notion of Gerstenhaber algebras:

\begin{definition}[Gerstenhaber]\label{def:Gerstenhaber}
A {\it Gerstenhaber algebra} is a graded $k$-vector space $A^{\bullet}$
endowed with two bilinear operators
$\cup: A^m \otimes A^n\rightarrow A^{m+n}$ and $\{-,-\}:A^n\otimes A^m\rightarrow A^{n+m-1}$
such that: for any homogeneous elements $a,b,c\in A^\bullet$,
\begin{enumerate}
\item[(1)] ($A^{\bullet}, \cup$) is a graded commutative associative algebra, i.e.,
$$a\cup b=(-1)^{|a||b|}b\cup a,$$ satisfying associativity;

\item[(2)] ($A^{\bullet},\{-,-\}$) is a graded Lie algebra with the bracket $\{-,-\}$ of degree $-1$, i.e.,
$$\{a,b\}=(-1)^{(|a|-1)(|b|-1)}\{b,a\}$$ and
$$\big\{a,\{b,c\}\big\}=\big\{\{a,b\},c\big\}+(-1)^{(|a|-1)(|b|-1)}\big\{b,\{a,c\}\big\};$$

\item[(3)] the cup product $\cup$ and the Lie bracket $\{-,-\}$ are compatible in the sense that
$$\{a,b\cup c\}=\{a,b\}\cup c+(-1)^{(|a|-1)|b|}b\cup\{a,c\}.$$
\end{enumerate}
\end{definition}

\begin{definition}[Tamarkin-Tsygan \cite{TT}, Definition 3.2.1]
Let $\mathrm{H}^\bullet$ and $\mathrm{H}_\bullet$ be two graded vector spaces.
A {\it differential calculus} is a sextuple
$$(\mathrm{H}^\bullet, \cup, \{-,-\}, \mathrm{H}_\bullet, \mathrm{B}, \cap),$$
satisfying the following conditions:
\begin{enumerate}
\item[(1)] $(\mathrm{H}^\bullet, \cup, \{-,-\})$ is a Gerstenhaber algebra;

\item[(2)] $\mathrm{H}_\bullet$ is a graded module over $(\mathrm{H}^\bullet,\cup)$ by the ``cap action"
$$\cap: \mathrm{H}^n\otimes \mathrm{H}_m \rightarrow \mathrm{H}_{m-n}, f\otimes \alpha \mapsto f\cap \alpha,$$
i.e.,
$(f\cup g)\cap \alpha=f\cap(g\cap \alpha)$ for any homogeneous $f, g\in \mathrm{H}^\bullet$, $\alpha \in \mathrm{H}_\bullet$;

\item[(3)] there exists a linear operator $\mathrm B: \mathrm{H}_\bullet\rightarrow \mathrm{H}_{\bullet+1}$
such that $\mathrm B^2=0$ and moreover, if we set
$L_f(\alpha):=\mathrm B(f\cap \alpha)-(-1)^{|f|}f\cap \mathrm B(\alpha)$,
then $L$ is a Lie algebra action of $\mathrm{H}^\bullet$ on $\mathrm H_\bullet$, that is,
$$L_{\{f,g\}}(\alpha)=[L_f,L_g](\alpha),$$
for any $f, g\in \mathrm H^\bullet$ and $\alpha \in\mathrm H_\bullet$.
\end{enumerate}
\end{definition}

\begin{example}\label{ex:differentialcalculusonPoisson}
Let $A$ be a Poisson algebra. Then
$$
(\mathrm{HP}^\bullet (A), \mathrm{HP}_\bullet(A, A),\cup,\{-,-\}, \cap, d)
$$
form a differential calculus,
where $\cup$ are $\{-,-\}$ are the wedge product
and the Schouten bracket induced
on the polyvectors, and $\cap$ is the contraction (also
denoted by $\iota$ before), and $d$ is the de Rham differential.
The key point here is to check that these operators are compatible with
the Poisson boundary and coboundary maps, which, however,
is a direct check; see \cite[Chapter 3]{LGPV} for more details.
\end{example}

\begin{definition}[Lambre \cite{Lambre}]
A differential calculus $(\mathrm{H}^\bullet,\mathrm{H}_\bullet,\cup, \{-,-\}, \cap,\mathrm B)$
is called a {\it differential calculus with duality} if there exists an element
$\eta\in\mathrm H_n$ for some $n$
such that
$$\phi:\mathrm{H}^{\bullet}\rightarrow \mathrm{H}_{n-\bullet},\;
\varphi\mapsto\varphi\cap\eta$$
is an isomorphism of $\mathrm H^\bullet$-modules.
\end{definition}

\begin{theorem}[Lambre \cite{Lambre}  Lemma 1.5 and Theorem 1.6]\label{calwithdualityinducesBV}
Assume $(\mathrm{H}^\bullet,\mathrm{H}_\bullet,\cup, \{-,-\}, \cap,\mathrm B)$
is a differential calculus with duality.
Let $\Delta:=\phi^{-1}\circ \mathrm{B}\circ \phi$. Then
$(\mathrm{H}^\bullet, \cup, \Delta)$ is a Batalin-Vilkovisky algebra where
$\Delta$ generates the Gerstenhaber bracket.
\end{theorem}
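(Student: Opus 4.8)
The plan is to reduce everything to the two defining properties of a Batalin--Vilkovisky operator, namely $\Delta^2=0$ and the second-order (seven-term) identity, by transporting the Connes-type operator $\mathrm B$ from $\mathrm{H}_\bullet$ to $\mathrm{H}^\bullet$ along the duality isomorphism $\phi$. The first property is immediate: since $\Delta=\phi^{-1}\circ\mathrm B\circ\phi$ and $\mathrm B^2=0$, we get $\Delta^2=\phi^{-1}\mathrm B^2\phi=0$. The whole content therefore lies in the second-order identity, and I would obtain it by first proving the \emph{generating} identity, i.e.\ that the deviation of $\Delta$ from being a derivation of $\cup$ is (up to sign) the prescribed Gerstenhaber bracket $\{-,-\}$.

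The backbone of the computation is the module identity $\phi(f\cup g)=f\cap\phi(g)$, together with $\phi(1)=\eta$, both immediate from the fact that $\phi$ is an isomorphism of $\mathrm{H}^\bullet$-modules and that $\cap$ is an associative cap action. Using the definition $\mathrm L_f(\alpha)=\mathrm B(f\cap\alpha)-(-1)^{|f|}f\cap\mathrm B(\alpha)$ and the relation $\mathrm B\phi(g)=\phi(\Delta g)$, I would expand
\[
\Delta(f\cup g)=\phi^{-1}\mathrm B\big(f\cap\phi(g)\big)=\phi^{-1}\mathrm L_f(\phi(g))+(-1)^{|f|}\,f\cup\Delta g.
\]
It then remains to identify $\phi^{-1}\mathrm L_f(\phi(g))$, for which I would apply the Cartan homotopy formula (the last relation in the definition of differential calculus) with $\alpha=\eta$. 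Since $\phi(g)=g\cap\eta$ and $\{f,g\}\cap\eta=\phi(\{f,g\})$, this expresses $\mathrm L_f(\phi(g))$ through $\phi(\{f,g\})$ and $g\cap\mathrm L_f(\eta)$; feeding in $\mathrm L_f(\eta)=\phi(\Delta f)$ and applying $\phi^{-1}$ yields, after collecting the Koszul signs,
\[
\Delta(f\cup g)-\Delta f\cup g-(-1)^{|f|}f\cup\Delta g=\{f,g\}
\]
up to the global sign fixed by one's conventions. This is exactly the statement that $\Delta$ generates $\{-,-\}$.

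With the generating identity in hand, the second-order property of $\Delta$ follows from a standard fact in Koszul's hierarchy of differential operators: when $\Delta 1=0$, a degree $-1$ operator with $\Delta^2=0$ is second-order precisely when its two-term deviation $(a,b)\mapsto\Delta(a\cup b)-\Delta a\cup b-(-1)^{|a|}a\cup\Delta b$ is a biderivation of $\cup$. But we have just identified this deviation with the Gerstenhaber bracket, and the compatibility axiom (3) of Definition \ref{def:Gerstenhaber} says precisely that $\{a,-\}$ is a derivation of $\cup$. Hence $\Delta$ is second-order, and $(\mathrm{H}^\bullet,\cup,\Delta)$ is a Batalin--Vilkovisky algebra whose bracket is the prescribed one.

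The main obstacle is the single input $\mathrm L_f(\eta)=\phi(\Delta f)$, which rests on $\mathrm B(\eta)=0$, equivalently $\Delta 1=\phi^{-1}\mathrm B(\eta)=0$. This condition is in fact forced: any Gerstenhaber bracket satisfies $\{1,g\}=0$, while the generating formula gives $\{1,g\}=-\Delta(1)\cup g$, so consistency demands $\Delta 1=0$; moreover the equivalence invoked above for the second-order property also requires $\Delta 1=0$. I would therefore verify $\mathrm B\eta=0$ in the cases of interest, where $\eta$ is the class of the volume form (a top-degree form) and $\mathrm B$ restricts to the de Rham differential $d$, which annihilates it. The only remaining difficulty is bookkeeping: carrying the Koszul signs coherently between the Cartan formula, the graded commutativity of $\cup$, and the generating formula, where a fixed consistent choice of conventions (as in Lambre) pins down the overall sign.
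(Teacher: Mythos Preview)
The paper does not give its own proof of this statement; it simply attributes the result to Lambre (Lemma~1.5 and Theorem~1.6 of \cite{Lambre}) and moves on. Your argument is in fact the standard one---and essentially Lambre's: transport $\mathrm B$ to $\mathrm H^\bullet$ via $\phi$, use the module identity $\phi(f\cup g)=f\cap\phi(g)$ together with the Cartan relation to obtain the generating formula for $\{-,-\}$, and then invoke the Leibniz rule for the bracket to conclude that $\Delta$ is second order. So methodologically there is nothing to compare.

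One remark on the point you flag yourself. The assumption $\mathrm B(\eta)=0$ (equivalently $\Delta 1=0$) is genuinely needed in your computation, and your ``consistency'' argument for it is circular: you derive $\{1,g\}=-\Delta(1)\cup g$ \emph{from} the generating formula, which you are still in the process of proving. The clean way to obtain $\mathrm B(\eta)=0$ in the present abstract setting is by degree: since $\phi:\mathrm H^\bullet\to\mathrm H_{n-\bullet}$ is an isomorphism and $\mathrm H^\bullet$ is concentrated in non-negative degrees, one has $\mathrm H_{n+1}\cong\mathrm H^{-1}=0$, hence $\mathrm B(\eta)\in\mathrm H_{n+1}$ vanishes. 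In the concrete applications of the paper this is of course just the fact that the de Rham differential kills the volume form. With that adjustment your proof is complete and matches the cited reference.
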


We next apply this theorem to the case of Poisson algebras.

\subsection{Poisson algebras with diagonalizable modular vector field}\label{subsect:diagonalizble}

Poisson structures with diagonalizable modular vector fields
are an important concept in Poisson geometry;
see, for example, \cite[\S8.2]{LGPV}, for more discussions.
In this subsection we show the existence of
a Batalin-Vilkovisky structure on the Poisson cohomology of a
Poisson algebra or a Frobenius Poisson algebra with
diagonalizable modular vector field.

For a Poisson algebra $A$ over $k$, 
a derivation $\nu\in\mathrm{Der}_k(A)$ is called {\it diagonalizable}
(or sometimes {\it semi-simple})
if there is a subset $\Sigma\subseteq k$
and a decomposition of $k$-vector spaces
$$A=\bigoplus_{\lambda\in\Sigma}A_\lambda,\quad
A_\lambda=\{a\in A\,|\,\nu(a)=\lambda a\}.$$
Now for a Poisson algebra with a diagonalizable modular vector field, we may decompose
its Poisson chain and cochain complexes into the direct sum of eigenspaces, which
leads to interesting results as we shall show below. We learned this idea from \cite{KK}
(see also \cite{LZZ,Liu} for some further applications).

Suppose $A$ has a diagonalizable modular vector field;
then we can decompose $A$ into the direct sum of eigenspaces of $\nu$, namely,
$A=\oplus_{\lambda_i}A_{\lambda_i}$,
where $A_{\lambda_i}:=\{a\in A| \nu(a)=\lambda_i a\}$. Let
$$
\mathrm{CP}_n^\lambda(A,A_\nu)
:=\left\{\sum f_0df_1\wedge\cdots \wedge df_n\in \mathrm{CP}_n(A,A_\nu)\Big|
\begin{array}{l}f_i\in A_{\lambda_i}\,\mbox{for some $\lambda_i$},\\
 i=0,1,\cdots, n,
\sum_{i=0}^n \lambda_i=\lambda
\end{array}
\right\}.
$$
Since $\nu(\{f,g\})=\{\nu(f),g\}+\{f,\nu(g)\}$, $\partial_\nu$ is closed on these spaces,
and hence $(\mathrm{CP}_\bullet^\lambda(A, A_\nu)
:=\bigoplus_n \mathrm{CP}_n^\lambda(A,A_\nu),\partial_\nu)$ is a subcomplex.
We have
\begin{equation}\label{eq:decompforchain}
\mathrm{CP}_\bullet(A,A_\nu)=\bigoplus_{\lambda}\mathrm{CP}_\bullet^\lambda(A, A_\nu).
\end{equation}
For the Poisson cochain complex $\mathrm{CP}^\bullet(A, A)$, we analogously have
a decomposition into the direct sum of the eigenspaces
\begin{equation}\label{eq:decompforcochain}
\mathrm{CP}^\bullet(A, A)
=\bigoplus_\lambda\mathrm{CP}^\bullet_\lambda(A, A))
=\bigoplus_\lambda\bigoplus_n\mathrm{CP}^n_\lambda(A, A)),
\end{equation}
where $\mathrm{CP}^n_\lambda(A, A_\nu)
:=\{\phi\in\mathrm{CP}^n_\lambda(A,A_\nu)|
\phi(A_{\lambda_1}\otimes\cdots\otimes A_{\lambda_n})\subset A_{\lambda_1+\cdots + \lambda_n+\lambda} \}.$

\begin{lemma}\label{lemma:homotopy}
Suppose $A$ is a Poisson algebra with diagonalizable modular vector field, then
$$
\partial_\nu\circ d+d\circ\partial_\nu=\tilde \nu,
$$
where $\tilde \nu (f_0df_1\wedge\cdots
\wedge df_n)=\nu(f_0)df_1\wedge\cdots \wedge df_n
+\sum_{i=1}^n f_0df_1\wedge\cdots\wedge d(\nu(f_i))\wedge\cdots \wedge df_n$.
\end{lemma}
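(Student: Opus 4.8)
The plan is to recognize this identity as a Cartan-type ``magic formula'' for the twisted Poisson complex, and to split the computation according to the decomposition $\partial_\nu=\partial+\iota_\nu$ established in the proof of Theorem \ref{thm:twistedPD1}. Recall from that proof that on a monomial $\omega=f_0\,df_1\wedge\cdots\wedge df_n$ the twisted boundary satisfies $\partial_\nu(\omega)=\partial(\omega)+\iota_\nu(\omega)$, where $\partial$ is the untwisted Poisson boundary and
$$
\iota_\nu(\omega)=\sum_{i=1}^n(-1)^{i-1}\nu(f_i)\,f_0\,df_1\wedge\cdots\widehat{df_i}\cdots\wedge df_n
$$
is precisely the contraction of $\omega$ with the modular vector field $\nu$. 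Since $d$ and $\partial_\nu$ are both odd operators, I would compute the anticommutator term by term,
$$
\partial_\nu\circ d+d\circ\partial_\nu=(\partial\circ d+d\circ\partial)+(\iota_\nu\circ d+d\circ\iota_\nu),
$$
and treat the two summands separately.

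For the first summand I would invoke the Koszul--Brylinski description $\partial=\iota_\pi\circ d-d\circ\iota_\pi$ of the Poisson boundary (the same identity used implicitly through $\mathcal{L}_\pi$ in the proof of Lemma \ref{TwistedPoincare}). Because $\iota_\pi$ has even degree and $d^2=0$, a one-line cancellation gives
$$
\partial\circ d+d\circ\partial=\iota_\pi\circ d^2-d\circ\iota_\pi\circ d+d\circ\iota_\pi\circ d-d^2\circ\iota_\pi=0;
$$
that is, $\partial$ anticommutes with the de Rham differential on the untwisted complex, so this summand contributes nothing. Here one should note that $\partial$ and $d$ form a mixed complex only in the untwisted (unimodular) situation, which is exactly why the full $\partial_\nu$ fails to anticommute with $d$ and produces the correction term $\tilde\nu$.

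It remains to treat $\iota_\nu\circ d+d\circ\iota_\nu$, for which I would apply Cartan's magic formula for the derivation $\nu$, namely $\iota_\nu\circ d+d\circ\iota_\nu=\mathcal{L}_\nu$; this is valid at the level of K\"ahler differentials since $\nu$ is a derivation of $A$. Evaluating the Lie derivative on a generator, and using that $\mathcal{L}_\nu$ commutes with $d$ and restricts to $\nu$ on functions, yields
$$
\mathcal{L}_\nu(f_0\,df_1\wedge\cdots\wedge df_n)=\nu(f_0)\,df_1\wedge\cdots\wedge df_n+\sum_{i=1}^n f_0\,df_1\wedge\cdots\wedge d\nu(f_i)\wedge\cdots\wedge df_n,
$$
which is exactly $\tilde\nu$. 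Combining the two summands gives $\partial_\nu\circ d+d\circ\partial_\nu=\tilde\nu$, as claimed. The argument is essentially formal once the operators are correctly identified, so the only genuine care needed is the bookkeeping of signs in the term-by-term expansion and in verifying Cartan's formula on generators; the main conceptual point, rather than any real obstacle, is isolating the untwisted anticommutator as zero so that the entire discrepancy is packaged into the Lie derivative $\mathcal{L}_\nu$. I remark that semisimplicity of $\nu$ plays no role in this particular identity: it enters only afterwards, through the eigenspace decomposition \eqref{eq:decompforchain}, where it makes $\tilde\nu$ act as the scalar $\lambda$ on $\mathrm{CP}_\bullet^\lambda(A,A_\nu)$.
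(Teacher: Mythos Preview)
Your proof is correct, but it takes a genuinely different route from the paper's. The paper proceeds by brute force: it expands $\partial_\nu\circ d$ and $d\circ\partial_\nu$ on a monomial $f_0\,df_1\wedge\cdots\wedge df_n$ directly from the defining formula for $\partial_\nu$, writes out all the terms with their signs, and observes that after summing, everything cancels except the terms assembling into $\tilde\nu$. No structural identities are invoked.

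Your argument, by contrast, factors the computation through two classical identities: the Koszul--Brylinski formula $\partial=\iota_\pi d-d\iota_\pi$ (which immediately forces $\partial d+d\partial=0$) and Cartan's magic formula $\iota_\nu d+d\iota_\nu=\mathcal L_\nu$. This is cleaner and more conceptual: it makes transparent \emph{why} the obstruction to $(\partial_\nu,d)$ being a mixed complex is exactly $\mathcal L_\nu$, and it avoids any sign bookkeeping. The paper's approach, on the other hand, is self-contained---it does not rely on the reader recalling the Koszul--Brylinski description of $\partial$---at the cost of a page of explicit expansion. Your closing observation that semisimplicity is not used in the identity itself, only in the subsequent eigenspace argument, is also correct and worth noting.
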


\begin{proof}
For any element $f_0df_1\wedge\cdots\wedge df_n\in \mathrm{CP}_n(A,A_\nu)$, we have
\begin{eqnarray*}
&&\partial_\nu\circ d(f_0df_1\wedge\cdots\wedge df_n)
=\partial_\nu( df_0\wedge df_1\wedge\cdots\wedge df_n)\\
&=& \sum_{i=0}^n(-1)^i \nu(f_i)df_0\wedge\cdots\wedge \widehat{df_i} \wedge\cdots\wedge df_n\\
&+&\sum_{i=1}^n(-1)^{i} d\{f_0,f_i\}\wedge df_1\wedge
\cdots \wedge\widehat{d f_i}\wedge\cdots \wedge df_n\\
&+& \sum_{0<i< j}(-1)^{i+j} d\{f_i,f_j\}\wedge df_0\wedge \cdots \wedge
\widehat {df_i}\wedge\cdots\wedge \widehat{df_j}\wedge\cdots \wedge df_n.
\end{eqnarray*}
and
\begin{eqnarray*}
&&d\circ\partial_\nu(f_0df_1\wedge\cdots\wedge df_n)\\
&=&d\Big(\sum_{i=1}^n(-1)^{i-1} (\{f_0,f_i\}+f_0\nu(f_i))df_1
\wedge\cdots\wedge \widehat{df_i}\wedge\cdots\wedge df_n  \\
&+& \sum_{0<i<j}(-1)^{i+j}f_0 d\{f_i,f_j\}\wedge df_1\wedge
\cdots \wedge \widehat{df_i}\wedge\cdots\wedge \widehat{df_j}\wedge\cdots\wedge df_n \Big)\\
&=& \sum_{i=1}^n (-1)^{i-1} d\{f_0,f_i\}\wedge df_1\wedge
\cdots \wedge \widehat{df_i}\wedge\cdots \wedge df_n\\
&+&\sum_{i=1}^n(-1)^{i-1}d(f_0\nu(f_i))\wedge df_1\wedge
\cdots\wedge \widehat{df_i}\wedge\cdots\wedge df_n\\
&+& \sum_{0<i<j}(-1)^{i+j}df_0\wedge d\{f_i,f_j\}\wedge
df_1\wedge \cdots \wedge \widehat{df_i}\wedge\cdots\wedge \widehat{df_j}\wedge\cdots \wedge df_n.
\end{eqnarray*}
Hence we have
\begin{eqnarray*}&&(\partial_\nu\circ d+d\circ \partial_\nu )(f_0df_1\wedge\cdots \wedge df_n)\\
&=&\nu(f_0)df_1\wedge\cdots \wedge df_n
+\sum_{i=1}^n f_0df_1\wedge\cdots\wedge d(\nu(f_i))\wedge\cdots \wedge df_n\\
&=&\tilde\nu(f_0df_1\wedge\cdots \wedge df_n).
\end{eqnarray*}
This completes the proof.
\end{proof}

\begin{theorem}
Suppose $A$ is a Poisson algebra with a diagonalizable modular vector field,
then
\begin{equation}\label{eq:homotopyequiv}
\mathrm{HP}_\bullet(A, A_\nu)=\mathrm{H}_\bullet(\mathrm{CP}_\bullet^0(A, A_\nu))
\quad
\mbox{and}\quad
\mathrm{HP}^\bullet(A, A)=\mathrm{H}^\bullet(\mathrm{CP}^\bullet_0(A, A)).
\end{equation}
In particular,
$$
(\mathrm{HP}^\bullet (A), \mathrm{HP}_\bullet(A,A_\nu),\cup,\{-,-\}, \iota, d)
$$
forms a differential calculus with duality.
\end{theorem}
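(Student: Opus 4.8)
The plan is to exploit the weight decompositions \eqref{eq:decompforchain} and \eqref{eq:decompforcochain} and show that every eigen-subcomplex of nonzero weight is acyclic, so that the whole (co)homology is carried by the weight-$0$ summand. I would begin with the chain statement. The first point is that on $\mathrm{CP}_\bullet^\lambda(A,A_\nu)$ the operator $\tilde\nu$ of Lemma \ref{lemma:homotopy} is just multiplication by $\lambda$: for $f_i\in A_{\lambda_i}$ one has $\nu(f_i)=\lambda_i f_i$, so $\tilde\nu(f_0\,df_1\wedge\cdots\wedge df_n)=(\textstyle\sum_i\lambda_i)\,f_0\,df_1\wedge\cdots\wedge df_n=\lambda\cdot(f_0\,df_1\wedge\cdots\wedge df_n)$ whenever the total weight is $\lambda$. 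Since $d$ visibly preserves the weight grading and $d^2=0$, Lemma \ref{lemma:homotopy} reads $\partial_\nu d+d\,\partial_\nu=\lambda\cdot\mathrm{id}$ on this summand. For $\lambda\neq 0$ the map $\tfrac{1}{\lambda}d$ is then a contracting homotopy, hence $\mathrm H_\bullet(\mathrm{CP}_\bullet^\lambda(A,A_\nu))=0$; as homology commutes with the direct sum \eqref{eq:decompforchain}, this gives the first identity $\mathrm{HP}_\bullet(A,A_\nu)=\mathrm H_\bullet(\mathrm{CP}_\bullet^0(A,A_\nu))$.

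For the cochain statement I would run the dual argument. The cleanest route is to transport the two differentials through the volume-form isomorphism $\dagger=\iota_{(-)}\eta$: by diagram \eqref{diag:unimodularPoisson} together with Lemma \ref{TwistedPoincare} the twisted boundary $\partial_\nu$ corresponds to the Lichnerowicz differential $\delta$, while by diagram \eqref{diag:unimodularPoisson111} the de Rham differential $d$ corresponds to the divergence $\mathrm{Div}$. Conjugating the identity of Lemma \ref{lemma:homotopy} by $\dagger$ therefore yields the analogous Cartan-type relation, of the schematic form $\delta\,\mathrm{Div}+\mathrm{Div}\,\delta=\tilde\nu$, in which $\tilde\nu$ again acts as $\lambda$ on the weight-$\lambda$ summand; alternatively this relation can be verified by a direct computation parallel to the proof of Lemma \ref{lemma:homotopy}. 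The same contracting-homotopy argument, now using $\tfrac{1}{\lambda}\mathrm{Div}$, annihilates the nonzero-weight summands of \eqref{eq:decompforcochain} and leaves the cohomology concentrated in weight $0$, proving the second identity.

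The \emph{In particular} assertion is the payoff of having reduced to weight $0$. On $\mathrm{CP}_\bullet^0(A,A_\nu)$ the weight is $\lambda=0$, so Lemma \ref{lemma:homotopy} degenerates to $\partial_\nu d+d\,\partial_\nu=0$; since $d$ preserves this summand and squares to zero, $(\mathrm{CP}_\bullet^0(A,A_\nu),\partial_\nu,d)$ is a genuine mixed complex, and $d$ descends to a well-defined operator $\mathrm B$ on its homology, which by the first identity is exactly $\mathrm{HP}_\bullet(A,A_\nu)$. I would then assemble the sextuple $(\mathrm{HP}^\bullet(A),\mathrm{HP}_\bullet(A,A_\nu),\cup,\{-,-\},\iota,d)$: the Gerstenhaber pair $(\mathrm{HP}^\bullet(A),\cup,\{-,-\})$ is the standard wedge–Schouten structure on polyvectors; the contraction $\iota$ turns $\mathrm{HP}_\bullet(A,A_\nu)$ into a module over it; and the duality element is the volume-form class $\eta$, whose cap product is an isomorphism precisely by the twisted Poincaré duality of Theorem \ref{thm:twistedPD1}. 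The two Cartan-type axioms of a differential calculus are the classical identities relating $\delta$, $d$ and $\iota$ on a Poisson manifold, and they hold on the weight-$0$ subcomplex because the obstructing term $\tilde\nu$ vanishes there.

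The main obstacle — and the conceptual content of the statement — is exactly this last point. On the \emph{full} twisted complex $d$ is not a chain map for $\partial_\nu$: if $\partial_\nu\alpha=0$ then $\partial_\nu(d\alpha)=\tilde\nu(\alpha)$, which need not vanish, so $d$ does not descend to $\mathrm{HP}_\bullet(A,A_\nu)$ and no mixed-complex (hence no $\mathrm B$-operator, and no Batalin–Vilkovisky structure downstream) is available. The semisimplicity of $\nu$ is what repairs this: it provides the eigenspace decomposition in which passing to weight $0$ simultaneously forces $\tilde\nu=0$, so that $(\partial_\nu,d)$ becomes a mixed complex, while the acyclicity of the nonzero-weight summands guarantees that this restriction costs no homology. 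I expect the only genuinely computational labor to be the sign bookkeeping in verifying the two Cartan identities on the weight-$0$ subcomplex; everything else is formal once the contracting homotopies $\tfrac{1}{\lambda}d$ and $\tfrac{1}{\lambda}\mathrm{Div}$ are in hand.
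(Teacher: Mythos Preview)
Your proposal is correct and follows essentially the same approach as the paper: both arguments use Lemma \ref{lemma:homotopy} to see that $\tilde\nu$ acts as $\lambda\cdot\mathrm{id}$ on the weight-$\lambda$ summand, so that $\tfrac{1}{\lambda}d$ furnishes a contracting homotopy (equivalently, a homotopy between the identity and the projection onto weight $0$), and then observe that on weight $0$ one obtains a genuine mixed complex together with the duality element $\eta$. Your treatment of the cochain side via transport through $\dagger=\iota_{(-)}\eta$ is a minor presentational variant of the paper's ``similarly''; the only detail worth making explicit (which the paper does note) is that $\eta$ itself lies in weight $0$, so that $\iota_{(-)}\eta$ respects the weight grading and the duality is compatible with the restriction to $\mathrm{CP}_\bullet^0$.
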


\begin{proof}
First, we have
inclusions
$$
i:\mathrm{CP}^0_\bullet(A, A_\nu)\hookrightarrow\mathrm{CP}_\bullet(A, A_\nu)\quad
\mbox{and}\quad
i:\mathrm{CP}_0^\bullet(A, A)\hookrightarrow\mathrm{CP}^\bullet(A, A).$$
We claim that these are homotopy equivalences of chain complexes.
In fact, by \eqref{eq:decompforchain}
and \eqref{eq:decompforcochain} the homotopy inverses are given by the projections.
If we denote the projections by $p$, then
$p\circ i=\mathrm{Id}$.
Now by Lemma \ref{lemma:homotopy} we have
\begin{equation}\label{eq:mixedcomplex}
(\partial_\nu\circ d+d\circ\partial_\nu)|_{\mathrm{CP}_\bullet^\lambda(A, A_\nu)}
=\tilde\nu|_{\mathrm{CP}_\bullet^\lambda(A, A_\nu)}
=\lambda\cdot\mathrm{Id}|_{\mathrm{CP}_\bullet^\lambda(A, A_\nu)},
\end{equation}
which means for $\lambda\ne 0$, the de Rham differential $d$,
up to a scalar, gives a homotopy retracting between $\mathrm{Id}$ and $i\circ p$.
This means,
$
i:\mathrm{CP}^0_\bullet(A, A_\nu)\hookrightarrow\mathrm{CP}_\bullet(A, A_\nu)
$ and similarly,
$i:\mathrm{CP}_0^\bullet(A, A)\hookrightarrow\mathrm{CP}^\bullet(A, A)$,
are equivalences of chain complexes, and \eqref{eq:homotopyequiv} follows.

Observe that from \eqref{eq:mixedcomplex} we also get that
$$(\mathrm{CP}_\bullet^0(A, A_\nu), \partial_\nu, d)$$
forms a mixed chain complex.
Now denote by $\eta$ the volume form of $A$, which represents
an $n$-class in
$\mathrm{HP}_n^0(A,A_\nu)$
corresponding to eigenvalue $0$.
We have that the cap action $\iota_{(-)}\eta$ preserves the eigenvalue
$$
\iota_{(-)}\eta: \mathrm{HP}^\bullet_\lambda(A)
\rightarrow \mathrm{HP}_{n-\bullet}^\lambda(A,A_{\nu}).
$$
Combining it with the twisted Poincare duality
$\mathrm{HP}^\bullet(A)\cong\mathrm{HP}_{n-\bullet}(A,A_\nu)$,
we get that
$$
(\mathrm{HP}^\bullet (A), \mathrm{HP}_\bullet(A,A_\nu),\cup,\{-,-\}, \iota, d)
$$
is a differential calculus with duality.
\end{proof}

Combining the above theorem with Lambre's Theorem \ref{calwithdualityinducesBV},
we get the following:

\begin{theorem}\label{thm:BVforPoisson}
Suppose $A$ is a Poisson algebra with diagonalizable
modular vector field, then
$\mathrm{HP}^\bullet(A)$ has a Batalin-Vilkovisky
algebra structure where the Batalin-Vilkovisky operator generates
the Gerstenhaber bracket.
\end{theorem}

\section{Deformation quantization}\label{sect:deformationquantization}

In this section we study the deformation quantization of Poisson algebras with
nontrivial modular vector field.
The ground field $k$ in this section is taken to be $\mathbb R$.

Suppose $A$ is a Poisson algebra;
its {\it (formal) deformation quantization},
denoted by $A_{\hbar}$, is a $k[\![\hbar]\!]-$linear associative product (called the star-product)
on $A[\![\hbar]\!]$
\[ a\star b=a\cdot b+B_1(a,b)\hbar+B_2(a,b)\hbar^2+\ldots,\quad \mbox{for}\; a, b\in A\]
such that $B_i:A\otimes A\to A$ are bidifferential operators, satisfying
$$B_1(a, b)-B_1(b, a)=\{a,b\}.$$
In what follows, we also write $B_i(a, b)$ as $\star_i(a, b)$.

In \cite{Kontsevich}
Kontsevich showed that there is a one-to-one correspondence between
the equivalence classes of the star-products and
the equivalence classes of Poisson structures
$\pi_\hbar=\pi+\pi_1\hbar+\cdots$
on
$A[\![\hbar]\!]$.
He also constructed an explicit $L_\infty$-quasi-isomorphism
$$\mathcal U: T_{\mathrm{poly}}(A)\to D_{\mathrm{poly}}(A)$$
from the space of polyvector fields to the Hochschild cochain complex which
acts on each component in $A$ as multi-derivations,
where the first term of $\mathcal U$ is the classical Hochschild-Kostant-Rosenberg
quasi-isomorphism.
Via this map, the Poisson bivector $\pi_\hbar$ on $A[\![\hbar]\!]$
corresponds to a star-product $\star$ on $A_{\hbar}$.
By considering the {\it tangent map} of $\mathcal U$, one then gets
a quasi-isomorphism
\begin{equation}\label{eq:isoofPoissonandHochschild}
\mathrm{CP}^\bullet(A[\![\hbar]\!],\pi_\hbar)\cong \mathrm{CH}^{\bullet}(A_{\hbar},\star).
\end{equation}
The reader may refer to Kontsevich's paper \cite{Kontsevich} for a proof
(see also Manchon-Torossian \cite{MT} for more details).

Later Dolgushev showed in \cite{Dolgushev}
that the deformation quantization of a Poisson polynomial
algebra is an AS-regular algebra;
similarly, the deformation quantization of a Poisson exterior
algebra is a graded Frobenius algebra.

What we are interested in now is to study the behavior
of the twisted Poisson homology $\mathrm{HP}_\bullet(A, A_\nu)$
under deformation quantization.

\subsection{Deformation quantization of Poisson bimodules}
We now
briefly go over the deformation quantization of Poisson bimodules.

\begin{definition}[Bursztyn-Waldmann \cite{BW}]
Suppose $M$ is a Poisson $A$-bimodule.
Suppose $A$ has a deformation quantization $A_\hbar$.
A {\it deformation quantization} of $M$,
denoted by $M_\hbar$, is $M[\![\hbar]\!]$ equipped with
an $A_\hbar$-bimodule structure such that
\begin{equation}\label{eq:infdeformationofmodules}
a\star_1m-m\star_1 a=\hbar\{a, m\},\quad \mbox{for all}\ a\in A, m\in M,
\end{equation}
where $a\star_1m$ and $m\star_1 a$ are the first terms in the deformations
of $M$ as left and right $A$-modules:
$$
a\star m=a\cdot m+a\star_1m\hbar+\cdots
\quad\mbox{and}\quad
m\star a=m\cdot a+m\star_1 a\hbar+\cdots.
$$
where $\star$ are the deformed (left and right) actions of $A_\hbar$ on $M_\hbar$.
\end{definition}

The following theorem about deformation quantization of Poisson
bimodules is proved by Chemla:

\begin{theorem}[Chemla \cite{Chemla} Corollary 21]\label{thm:Chemla}
Let $A$ be the Poisson algebra of a Poisson manifold,
and $M$ be a Poisson $A$-bimodule.
Then
$$
\mathrm{HP}^\bullet(A[\![\hbar]\!], M[\![\hbar]\!])\cong
\mathrm{HH}^\bullet(A_\hbar, M_\hbar).$$
\end{theorem}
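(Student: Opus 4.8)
The plan is to deduce this module-coefficient statement from the self-coefficient isomorphism \eqref{eq:isoofPoissonandHochschild} already provided by Kontsevich's formality, by encoding the coefficients in a square-zero extension. First I would form the trivial extension $\tilde{A}:=A\ltimes M$, the vector space $A\oplus M$ in which $M$ is an ideal with $M\cdot M=0$; the commutative product of $A$, the Poisson bracket $\pi$, and the two module operations $\cdot$ and $\{-,-\}_M$ assemble, via the Poisson-module axioms, into a Poisson structure $\tilde{\pi}$ on $\tilde{A}$ that respects the grading assigning weight $0$ to $A$ and weight $1$ to $M$ (indeed $\{M,M\}=0$ for degree reasons). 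On the quantum side I would set $\tilde{A}_\hbar:=A_\hbar\ltimes M_\hbar$, the semidirect product built from the $A_\hbar$-bimodule $M_\hbar$ with $M_\hbar\star M_\hbar=0$; associativity follows from the bimodule axioms, and \eqref{eq:infdeformationofmodules} shows that its first-order term is exactly $\tilde{\pi}$, so that $\tilde{A}_\hbar$ is a deformation quantization of $\tilde{A}$ in the sense of \cite{Kontsevich}.

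The $M$-weight grading is the key bookkeeping device. Both $\mathrm{CP}^\bullet(\tilde{A},\tilde{A})$ and $\mathrm{CH}^\bullet(\tilde{A}_\hbar,\tilde{A}_\hbar)$ split into weight-homogeneous pieces, and their differentials preserve weight because $\tilde{\pi}$ and $\star$ are of weight $0$. A direct inspection identifies the weight-$1$ summands with the coefficient complexes one wants: the only multiderivations $\tilde{A}^{\otimes p}\to\tilde{A}$ of net weight $1$ are those sending $A^{\otimes p}$ into $M$, so the weight-$1$ part of $(\mathrm{CP}^\bullet(\tilde{A},\tilde{A}),\delta^{\tilde{\pi}})$ is $(\mathrm{CP}^\bullet(A,M),\delta^\pi)$, and the same analysis on the associative side gives that the weight-$1$ part of $\mathrm{CH}^\bullet(\tilde{A}_\hbar,\tilde{A}_\hbar)$ is the Hochschild complex $\mathrm{CH}^\bullet(A_\hbar,M_\hbar)$ with values in the bimodule $M_\hbar$. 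Applying \eqref{eq:isoofPoissonandHochschild} with $\tilde{A}$ in place of $A$ yields a quasi-isomorphism $\mathrm{CP}^\bullet(\tilde{A}[\![\hbar]\!],\tilde{\pi}_\hbar)\simeq\mathrm{CH}^\bullet(\tilde{A}_\hbar,\star)$, and restricting it to the weight-$1$ summands produces $\mathrm{HP}^\bullet(A[\![\hbar]\!],M[\![\hbar]\!])\cong\mathrm{HH}^\bullet(A_\hbar,M_\hbar)$, which is the assertion.

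The crux, and the main obstacle, is to guarantee that the formality quasi-isomorphism itself respects the $M$-weight grading, so that the final restriction step is legitimate. This reduces to the fact that Kontsevich's tangent map is assembled from the universal graph operators $\mathcal{U}_n$ twisted by the weight-$0$ Maurer--Cartan element $\hbar\tilde{\pi}$; each $\mathcal{U}_n$ only contracts the input polyvectors along a graph and transports every $M$-leg without creating or annihilating one, hence is weight-homogeneous, and the same holds after twisting. Two technical points require care. First, $\tilde{A}=A\ltimes M$ is not the function algebra of an honest manifold, so one must run Kontsevich's and Dolgushev's globalization (\cite{Dolgushev}) in the graded, square-zero setting; since the formal-geometry constructions are functorial and weight-homogeneous, the grading survives globalization. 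Second, one must check that the bimodule deformation $M_\hbar$ supplied by \cite{BW} is compatible with the tangent map, i.e. that the weight-$1$ component of the twisted $L_\infty$-morphism really is a morphism of the coefficient complexes; this is precisely the extension of formality to modules, and it is here that \eqref{eq:infdeformationofmodules} is invoked once more.
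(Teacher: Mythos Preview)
The paper does not supply its own proof of this statement: Theorem~\ref{thm:Chemla} is quoted verbatim as Corollary~21 of Chemla~\cite{Chemla} and is used as a black box (see the proofs of \eqref{eq:isooftwistedhomology} and of \eqref{eq:HPtoHHtwisted}). There is therefore nothing in the present paper to compare your argument against; what you have written is an independent proof sketch, not a reconstruction of anything the authors do.

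That said, your strategy---encoding the Poisson module in the square-zero extension $\tilde A=A\ltimes M$, applying self-coefficient formality \eqref{eq:isoofPoissonandHochschild} to $\tilde A$, and then projecting to the weight-$1$ piece---is a legitimate and well-known device. Two points deserve more care than you give them. First, the identification of the weight-$1$ summand of $\mathfrak X^p(\tilde A)$ with $\mathfrak X^p_A(M)$ is not just the observation that maps $A^{\otimes p}\to M$ have weight~$1$; you must also check that a weight-$1$ multiderivation is \emph{determined} by its restriction to $A^{\otimes p}$ and that the components with one or more inputs in $M$ carry no extra data (they vanish on weight grounds only after you use the derivation property together with $M\cdot M=0$, and one should say this explicitly). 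Second, and more seriously, invoking \eqref{eq:isoofPoissonandHochschild} for $\tilde A$ presupposes that Kontsevich's $L_\infty$-quasi-isomorphism and its tangent map are available for $\tilde A$; but $\tilde A$ is not the ring of functions on a manifold, and the globalization argument you gesture at is precisely the nontrivial content of the module-valued formality theorems (Shoikhet's formality for chains, and Chemla's extension to coefficients). In other words, the ``technical point'' you flag at the end is not a detail but essentially the theorem itself, and your sketch reduces it to a statement of comparable depth rather than to \eqref{eq:isoofPoissonandHochschild} alone.
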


We next apply this theorem to the case
of Poisson algebras
with nontrivial modular vector fields.
To this end, we first have to introduce the notion
of Artin-Schelter regular algebras.

\subsection{Artin-Schelter regular algebras}

Artin-Schelter regular algebras were introduced by Artin and Schelter
in \cite{AS}):

\begin{definition}[AS-regular algebra]
A connected graded $k$-algebra A is called {\it AS-regular}
of dimension $n$ if
\begin{enumerate}
\item[(1)] A has finite global dimension $n$, and

\item[(2)] A is Gorenstein, that is, $\mathrm{Ext}_A^i
(k,A)=0$ for $i\neq n$ and $\mathrm{Ext}_A^n
(k,A)\simeq k$.
\end{enumerate}
\end{definition}

In the literature, an AS-regular algebra is also called a {\it twisted
Calabi-Yau algebra}, due to the following.

\begin{theorem}[Reyes-Rogalski-Zhang \cite{RRZ} Lemma 1.2]
Suppose $A$ is as above. Then $A$ is AS-regular if and only
if it is twisted Calabi-Yau; that is, $A$ satisfies the following two conditions:
\begin{enumerate}
\item[$(1)$] A is homologically smooth, that is, $A$, viewed as an
$A^e$-module, has a bounded, finitely generated projective resolution;

\item[$(2)$] there exists an integer $n$ and an algebra automorphism $\sigma$ of $A$ such that
\[
\mathrm{Ext}^i_{A^e}(A, A^e)\cong
\left\{\begin{array}{cl} A_{\sigma},&\mbox{if}\; i=n,\\
	0,&\mbox{otherwise}
\end{array}\right.
\]
as $A^e$-modules.
\end{enumerate}
\end{theorem}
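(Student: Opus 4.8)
The plan is to prove the two implications around a single homological identity, exploiting throughout that $A$ is connected graded, so that the trivial module $k=A/A_{\ge 1}$ has a minimal graded free resolution and $\mathrm{gldim}\,A<\infty$ is equivalent to $\mathrm{pd}_A k<\infty$.

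First I would reconcile the two smoothness hypotheses, i.e. show that for connected graded $A$ the finite global dimension in the AS-regular definition is equivalent to homological smoothness. One direction is immediate: a bounded resolution of $A$ by finitely generated projective $A^e$-modules restricts, via $-\otimes_A k$, to a finite projective resolution of $k$, so $\mathrm{gldim}\,A\le n$. For the converse I would use that $\mathrm{pd}_A k=n<\infty$ forces $A$ to be perfect as an $A^e$-module: the finite-dimensionality and vanishing above degree $n$ of $\mathrm{Tor}^A_\bullet(k,k)$ let one build a finite resolution of $A$ by finitely generated free bimodules (for instance from a minimal, Koszul-type bimodule complex). Thus I may assume $A$ is homologically smooth of the stated dimension in both directions.

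The technical core is a single identity. Writing $C:=\mathrm{RHom}_{A^e}(A,A^e)$, I would establish a natural isomorphism in the derived category
\[
C\otimes^{\mathbb L}_A k\;\simeq\;\mathrm{RHom}_A(k,A).
\]
Because $A$ is smooth, $C$ is a perfect complex of bimodules, so the derived tensor commutes past $\mathrm{RHom}_{A^e}(A,-)$; combining this with the adjunction relating the bimodule and one-sided derived Homs, together with the elementary reduction of the outer factor of $A^e$ along the augmentation, yields the claim. I expect this adjunction chase, together with the careful tracking of the left/right $A$-module structures and internal gradings, to be the main obstacle; everything else is formal once it is in place.

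Granting the identity, the forward direction runs as follows. The Gorenstein hypothesis says $\mathrm{RHom}_A(k,A)$ is concentrated in cohomological degree $n$ with value $k$, so by the identity $C\otimes^{\mathbb L}_A k$ is concentrated in degree $n$ and one-dimensional. A graded Nakayama/minimal-complex argument applied to the perfect complex $C$ then forces $\mathrm{Ext}^i_{A^e}(A,A^e)=0$ for $i\neq n$ and shows that $U:=\mathrm{Ext}^n_{A^e}(A,A^e)$ is an invertible graded bimodule with $U\otimes_A k\cong k$; since every invertible graded bimodule over a connected graded algebra whose one-sided reduction is one-dimensional is isomorphic to $A_\sigma$ for a unique graded automorphism $\sigma$, condition $(2)$ of twisted Calabi-Yau follows, with $\sigma$ the Nakayama automorphism. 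Conversely, assuming twisted Calabi-Yau, the equivalence just established gives finite global dimension $n$, and feeding $C\simeq A_\sigma[-n]$ into the identity gives $\mathrm{RHom}_A(k,A)\simeq(A_\sigma\otimes^{\mathbb L}_A k)[-n]$; as $A_\sigma$ is free of rank one as a one-sided module and $A_\sigma\otimes_A k\cong k$, this is $k[-n]$, which is exactly the Gorenstein condition. Hence $A$ is AS-regular, completing the equivalence.
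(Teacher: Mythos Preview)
The paper does not give its own proof of this statement; it is quoted verbatim from \cite{RRZ} (their Lemma~1.2) and used as a black box, so there is no in-paper argument to compare against.

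Your outline is essentially the standard route, and it is the one taken in \cite{RRZ}: establish the derived identity $\mathrm{RHom}_{A^e}(A,A^e)\otimes^{\mathbb L}_A k\simeq\mathrm{RHom}_A(k,A)$, then read off the Gorenstein condition from the twisted Calabi-Yau condition (and vice versa) and invoke graded Nakayama to recognise the rigid dualising bimodule as $A_\sigma$ for a graded automorphism $\sigma$. One point to tighten: in your passage from finite global dimension to homological smoothness you assume each $\mathrm{Tor}^A_i(k,k)$ is finite-dimensional. That is not automatic for an arbitrary connected graded algebra of finite global dimension; you need either local finiteness of $A$, a noetherian hypothesis, or (as in \cite{RRZ}) to feed the Gorenstein condition back in to control the graded pieces of the minimal resolution. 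Make explicit which of these you are invoking, since the paper's bare definition of AS-regular does not include any such finiteness.
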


In the above theorem,
$A_\sigma$ is $A$ with the {\it twisted} $A$-bimodule structure given by
\[
a\cdot b\cdot c:=ab\sigma(c).
\]
for any $a,b,c\in A$, and $\sigma$ is usually called the {\it Nakayama automorphism} of $A$.
If $\sigma=Id$, then $A$ is called {\it Calabi-Yau} in the sense of Ginzburg \cite{Ginzburg}.

In 2008, Brown and Zhang obtained a refinement of Van den Bergh's
noncommutative Poincar\'e duality:

\begin{theorem}[\cite{BZ} Corollary 0.4]\label{thm:BZ}
Suppose $A$ is an AS-regular algebra of dimension $n$.
Then we have the following isomorphism
\[\mathrm{HH}^\bullet(A)\cong\mathrm{HH}_{n-\bullet}(A,A_\sigma),\]
where $\mathrm{HH}^\bullet(-)$ and $\mathrm{HH}_\bullet(-)$ are the Hochschild
cohomology and homology respectively.
\end{theorem}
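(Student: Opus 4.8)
The plan is to derive this as the special case $M=A$ of Van den Bergh's duality, using the twisted Calabi--Yau property that the preceding theorem supplies. Since $A$ is AS-regular, it is twisted Calabi--Yau: it is homologically smooth, so $A$ admits a finite resolution $P_\bullet\to A$ by finitely generated projective $A^e$-modules, and moreover
\[
\mathrm{Ext}^i_{A^e}(A, A^e)\cong\begin{cases}A_\sigma,&i=n,\\0,&i\neq n,\end{cases}
\]
as $A^e$-modules, where $\sigma$ is the Nakayama automorphism. I would first establish the more general statement $\mathrm{HH}^\bullet(A,M)\cong\mathrm{HH}_{n-\bullet}(A, A_\sigma\otimes_A M)$ for an arbitrary bimodule $M$, and then specialize to $M=A$, where $A_\sigma\otimes_A A\cong A_\sigma$ delivers the stated isomorphism.

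The mechanism is a tensor--hom comparison carried out on the fixed resolution $P_\bullet$. Recall that $\mathrm{HH}^\bullet(A,M)=\mathrm{Ext}^\bullet_{A^e}(A,M)$ is computed as the cohomology of $\mathrm{Hom}_{A^e}(P_\bullet, M)$. Because each $P_i$ is finitely generated projective, there is a natural isomorphism
\[
\mathrm{Hom}_{A^e}(P_i, M)\cong\mathrm{Hom}_{A^e}(P_i, A^e)\otimes_{A^e}M.
\]
Setting $P^\vee_\bullet:=\mathrm{Hom}_{A^e}(P_\bullet, A^e)$, this is a complex of finitely generated projective right $A^e$-modules whose cohomology, by the displayed Ext computation, is concentrated in degree $n$ and equal to $A_\sigma$. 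Hence $P^\vee_\bullet$, suitably reindexed, is a projective resolution of $A_\sigma$ placed in homological degree $n$.

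Putting these together, I would obtain
\[
\mathrm{HH}^k(A,M)=H^k\big(\mathrm{Hom}_{A^e}(P_\bullet,M)\big)
=H^k\big(P^\vee_\bullet\otimes_{A^e}M\big)
=\mathrm{Tor}^{A^e}_{n-k}(A_\sigma, M).
\]
The standard identification of $\mathrm{Tor}$ over $A^e$ with Hochschild homology rewrites the right-hand side as $\mathrm{HH}_{n-k}(A, A_\sigma\otimes_A M)$; specializing to $M=A$ and using $A_\sigma\otimes_A A\cong A_\sigma$ yields $\mathrm{HH}^\bullet(A)\cong\mathrm{HH}_{n-\bullet}(A,A_\sigma)$.

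The hard part will be bookkeeping rather than any deep idea: one must verify that homological smoothness genuinely provides a finite, degreewise finitely generated projective resolution, so that the tensor--hom identification and the passage through $P^\vee_\bullet$ are legitimate, and---more delicately---one must track the left/right $A^e$-actions carefully so that the coefficient bimodule emerging at the end is exactly $A_\sigma$ and not the inverse twist $A_{\sigma^{-1}}$. Pinning down this twist amounts to checking how the outer $A^e$-module structure on $A^e$ is transported through the isomorphism $\mathrm{Ext}^n_{A^e}(A,A^e)\cong A_\sigma$, which is precisely where the Nakayama automorphism enters.
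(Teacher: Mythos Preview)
The paper does not give its own proof of this theorem; it is simply quoted from Brown--Zhang \cite{BZ} (Corollary~0.4) as an external input, so there is nothing in the paper to compare your argument against. Your sketch is precisely the standard Van den Bergh duality argument (originating in \cite{VdB97}), run through the twisted Calabi--Yau characterization supplied by the preceding Reyes--Rogalski--Zhang theorem: dualize a finite projective $A^e$-resolution of $A$, use finite generation to pass from $\mathrm{Hom}_{A^e}(-,M)$ to $(-)^\vee\otimes_{A^e}M$, observe that $P^\vee_\bullet$ is a shifted projective resolution of $A_\sigma$, and read off $\mathrm{Tor}^{A^e}_{n-\bullet}(A_\sigma,A)=\mathrm{HH}_{n-\bullet}(A,A_\sigma)$. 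This is correct, and the two caveats you flag---that homological smoothness really yields a \emph{finite, finitely generated} projective resolution, and that the left/right $A^e$-structures must be tracked so the twist comes out as $A_\sigma$ rather than $A_{\sigma^{-1}}$---are exactly the points where care is required. One small remark: your intermediate general-$M$ statement $\mathrm{Tor}^{A^e}_{n-k}(A_\sigma,M)\cong\mathrm{HH}_{n-k}(A,A_\sigma\otimes_A M)$ is true but itself requires a short argument (it is not literally the definition of Hochschild homology); since you only need $M=A$, where $\mathrm{Tor}^{A^e}_{n-k}(A_\sigma,A)=\mathrm{HH}_{n-k}(A,A_\sigma)$ is immediate by balancing Tor, you could bypass it entirely.
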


\begin{example}\label{example:2dimAS}
Let $A= k\langle x_1,\cdots, x_n\rangle/(f)$,
where
$$
f=(x_1,\cdots, x_n)g(x_1,\cdots, x_n)^T, g\in \mathrm{GL}_n(k),
$$
and $(f)$ means the ideal generated by $f$.
Then $A$ is an AS-regular algebra.
Observe that $A$ is a graded algebra;
for $x=\sum k_i x_i$, $k_i\in k$,
let
$$
{\sigma}(x)=-(x_1,\cdots, x_n)g^Tg^{-1}(k_1,\cdots, k_n)^T,
$$
and extend it to the whole $A$. The $\sigma$ thus defined
is the Nakayama automorphism of $A$.
\end{example}

\begin{example}
[The Quantum affine space]
Let $Q=\left(\begin{array}{ccc}
q_{11}&\cdots & q_{1n}\\
\vdots&\ddots&\vdots\\
q_{n1}&\cdots & q_{nn}
\end{array}
\right)$ be an $n\times n$ matrix over $k$ with
$q_{ii}=1, q_{ij}q_{ji}=1$, for $1\leq i,j\leq n$.
Let $A=k\langle x_1,\cdots, x_n\rangle/(x_jx_i-q_{ij}x_ix_j)$.
Then $A$ is an AS-regular algebra with
the Nakayama automorphism $\sigma$ given by
$$
\sigma(x_i)=(\Pi_{j=1}^nq_{ji})x_i,\quad i=1,\cdots, n.
$$
\end{example}

\subsection{Quantization of the modular vector fields}

Now let $\nu$ be the modular vector field of a Poisson algebra $A$,
and $\nu_\hbar=\nu+\nu_1\hbar+\cdots$ be the modular vector field
with respect to $\pi_\hbar$. Since $\nu_\hbar$ is
a Poisson cocycle, its image under Kontsevich's $L_\infty$ map
gives a Hochschild cocycle, denoted by $\sigma$, which is in fact
$\exp(\nu_\hbar)$; see
Dolgushev \cite[Theorem 2]{Dolgushev} for a proof.

\begin{lemma}[Dolgushev]\label{lemma:Dolgu}
Let $A$ be as above.
Let $\nu$ be the modular vector field of $A$.
Then
$$(A_\nu)_\hbar=(A_\hbar)_\sigma,$$
up to an automorphism of $A[\![\hbar]\!]$ whose leading term is $Id$.
In other words, $\sigma$ is the deformation quantization of $\nu$ by
Kontsevich's $L_{\infty}$-quasi-isomorphism $\mathcal{U}$.
\end{lemma}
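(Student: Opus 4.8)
The plan is to exhibit both $(A_\nu)_\hbar$ and $(A_\hbar)_\sigma$ as deformation quantizations, in the sense of Bursztyn--Waldmann, of the \emph{one} Poisson module $A_\nu$, and then to conclude by the uniqueness of such quantizations up to gauge equivalence. The input from Dolgushev \cite[Theorem 2]{Dolgushev}, recalled just above, is that $\nu_\hbar=\nu+\nu_1\hbar+\cdots$ is a Poisson $1$-cocycle for $\pi_\hbar$ whose image under Kontsevich's tangent map is the Hochschild cocycle $\sigma=\exp(\hbar\nu)$; being the exponential of a derivation of the star-product, $\sigma$ is a genuine algebra automorphism of $A_\hbar$. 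This is exactly the assertion that $\sigma$ is the deformation quantization of $\nu$ under $\mathcal U$, which settles the final clause of the lemma and makes the twisted bimodule $(A_\hbar)_\sigma$ --- with left action $a\cdot m=a\star m$ and right action $m\cdot c=m\star\sigma(c)$ --- a well-defined $A_\hbar$-bimodule reducing to $A$ at $\hbar=0$.

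Next I would verify the infinitesimal compatibility \eqref{eq:infdeformationofmodules} for $(A_\hbar)_\sigma$. Expanding to first order and using the quantization identity $B_1(a,m)-B_1(m,a)=\{a,m\}$ together with $\sigma(a)=a+\hbar\,\nu(a)+O(\hbar^2)$, one finds
$$
a\star m-m\star\sigma(a)=\hbar\bigl(\{a,m\}-m\,\nu(a)\bigr)+O(\hbar^2),
$$
which is, up to the overall sign dictated by our conventions, precisely the twisted bracket $\{m,a\}_{A^\nu}$ of \eqref{eq:twistedbracket}. Thus the classical Poisson module underlying $(A_\hbar)_\sigma$ is $A_\nu$, and $(A_\hbar)_\sigma$ is a deformation quantization of $A_\nu$.

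Finally, the deformation quantization of a Poisson module over the polynomial algebra $A=\mathbb R[x_1,\dots,x_n]$ is unique up to an $A_\hbar$-bimodule isomorphism whose leading term is the identity, the obstruction and ambiguity lying in Hochschild groups that vanish in this smooth setting. Applying this to the two quantizations $(A_\nu)_\hbar$ and $(A_\hbar)_\sigma$ of $A_\nu$ yields the claimed identification up to such an automorphism. The main obstacle is exactly this last step: matching the first-order term does not by itself force agreement to all orders, so one must supply the uniqueness input. The cleanest way to do so is to appeal directly to Kontsevich--Dolgushev formality for the pair (algebra, module), under which the passage $\nu\rightsquigarrow\sigma$ is the module-level shadow of the quasi-isomorphism \eqref{eq:isoofPoissonandHochschild}; this realizes the agreement of the two bimodule structures to all orders and completes the proof.
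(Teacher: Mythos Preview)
Your proposal is correct and follows essentially the same route as the paper. Both arguments invoke Dolgushev's Theorem~2 to identify $\sigma=\exp(\hbar\nu)$ as the image of the modular cocycle under Kontsevich's tangent map, and then verify the first-order compatibility \eqref{eq:infdeformationofmodules} to see that $(A_\hbar)_\sigma$ deforms the Poisson module $A_\nu$; the paper's short computation is exactly your second paragraph (your sign is in fact the correct one---the paper has a minor sign slip in that display).

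The only point where you go further is in flagging, and then addressing, the passage from the first-order check to agreement to all orders. The paper simply writes ``by the argument above'' and stops after the infinitesimal verification, effectively packaging the all-orders statement into the citation of Dolgushev's result; you instead spell out that one needs either a uniqueness statement for module quantizations over $\mathbb R[x_1,\dots,x_n]$ or the module-level formality underlying \eqref{eq:isoofPoissonandHochschild}. That extra care is justified and makes the logic cleaner, but it does not constitute a different strategy---it is the same proof with the dependence on formality made explicit rather than absorbed into ``the argument above''.
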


\begin{proof}
By the argument above, we only need to show that for any $a, m\in A$,
they satisfy \eqref{eq:infdeformationofmodules}. In fact,
\begin{eqnarray*}
a\star m-m\star\sigma(a)
&\equiv&\hbar(a\star_1 m -m\star_1a +m\cdot\nu(a))\\
&=&\hbar(\{a,m\}+m\cdot\nu(a))\\
&=&\hbar\{a,m\}_\nu\quad\mbox{mod}\; \hbar^2.
\end{eqnarray*}
The lemma now follows.\end{proof}

\begin{lemma}
Let $(A=\mathbb R[x_1,\cdots x_n],\pi)$ be a Poisson
algebra and $\nu$ be the corresponding modular vector field.
We have
\begin{equation}\label{eq:isooftwistedhomology}
\mathrm{HP}_\bullet(A[\![\hbar]\!], (A_\nu)[\![\hbar]\!])
\cong\mathrm{HH}_\bullet(A_\hbar, (A_\hbar)_\sigma).
\end{equation}
\end{lemma}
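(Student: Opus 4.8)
The plan is to combine the two results the excerpt has already assembled: Chemla's theorem (Theorem \ref{thm:Chemla}) identifying Poisson cohomology with Hochschild cohomology after deformation quantization, and Dolgushev's identification (Lemma \ref{lemma:Dolgu}) of the quantized twisted module with the Nakayama-twisted bimodule. The target isomorphism \eqref{eq:isooftwistedhomology} is stated for \emph{homology}, whereas Chemla's result is phrased for \emph{cohomology}, so the first thing I would do is arrange to apply Chemla's theorem to the correct Poisson module. The key observation is that twisted Poisson homology can itself be rewritten as Poisson cohomology with values in a twisted module, via the twisted Poincar\'e duality of Theorem \ref{thm:twistedPD1}; and on the associative side, Hochschild homology with values in $(A_\hbar)_\sigma$ is likewise rewritten as Hochschild cohomology by Van den Bergh / Brown--Zhang duality (Theorem \ref{thm:BZ}).

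Concretely, I would proceed in three steps. \textbf{Step 1.} Apply Theorem \ref{thm:Chemla} to the Poisson module $M=A_\nu$, giving
\[
\mathrm{HP}^\bullet(A[\![\hbar]\!], (A_\nu)[\![\hbar]\!])\cong\mathrm{HH}^\bullet(A_\hbar, (A_\nu)_\hbar).
\]
Here I must check that $(A_\nu)[\![\hbar]\!]$ really is a deformation of the Poisson module $A_\nu$ in the sense of Bursztyn--Waldmann, i.e.\ that the $A_\hbar$-bimodule structure reduces at first order to the twisted bracket $\{-,-\}_\nu$; but this is exactly the content of Lemma \ref{lemma:Dolgu}, so no new work is needed. \textbf{Step 2.} Substitute $(A_\nu)_\hbar=(A_\hbar)_\sigma$ from Lemma \ref{lemma:Dolgu} into the right-hand side, yielding
\[
\mathrm{HP}^\bullet(A[\![\hbar]\!], (A_\nu)[\![\hbar]\!])\cong\mathrm{HH}^\bullet(A_\hbar, (A_\hbar)_\sigma).
\]
\textbf{Step 3.} Convert both sides from cohomology to homology. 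On the Poisson side I would invoke the twisted Poincar\'e duality of Theorem \ref{thm:twistedPD1} (applied over $\mathbb R[\![\hbar]\!]$ to the deformed Poisson structure $\pi_\hbar$), which gives $\mathrm{HP}^\bullet(A[\![\hbar]\!])\cong\mathrm{HP}_{n-\bullet}(A[\![\hbar]\!], (A_\nu)[\![\hbar]\!])$; on the associative side the analogous duality is Brown--Zhang's Theorem \ref{thm:BZ}, $\mathrm{HH}^\bullet(A_\hbar)\cong\mathrm{HH}_{n-\bullet}(A_\hbar, (A_\hbar)_\sigma)$. Chaining these identifications then delivers the stated isomorphism of the twisted homologies in matching degrees.

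The main obstacle I anticipate is the bookkeeping of \emph{which} module sits on each side and ensuring the degree shifts line up, rather than any deep new idea. In particular I would need to be careful that the ``twist'' appearing in Chemla's cohomological statement (value in $A_\nu$) and the twist appearing after duality (value in $A_\nu$ again, but now reached via contraction with the volume form) are genuinely the same module and not an inverse twist; the compatibility is guaranteed by the fact that the volume form $\eta$ intertwines $\delta^\pi$ with $\partial_\nu$, as established in Lemma \ref{TwistedPoincare} and Theorem \ref{thm:twistedPD1}. A secondary technical point is that Theorem \ref{thm:Chemla} is stated for the Poisson algebra of a manifold; to apply it to $A=\mathbb R[x_1,\dots,x_n]$ one relies on the polynomial (algebraic) version, which the excerpt has already signalled is available. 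Once these identifications are confirmed, the proof is essentially a diagram chase through Theorems \ref{thm:Chemla}, \ref{thm:twistedPD1}, \ref{thm:BZ} and Lemma \ref{lemma:Dolgu}, with no residual computation.
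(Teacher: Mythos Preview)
Your Steps 1--2 are exactly what the paper does: apply Chemla's theorem with $M=A_\nu$ and substitute Dolgushev's identification $(A_\nu)_\hbar=(A_\hbar)_\sigma$. The paper's two-line proof stops there, silently using that Chemla's formality with module coefficients yields the \emph{homology} statement just as well as the cohomology one quoted in Theorem~\ref{thm:Chemla}; that is the intended reading.

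Your Step~3, however, does not work as written. The dualities you invoke, Theorem~\ref{thm:twistedPD1} and Theorem~\ref{thm:BZ}, have the form
\[
\mathrm{HP}^\bullet(A[\![\hbar]\!])\cong\mathrm{HP}_{n-\bullet}(A[\![\hbar]\!],A_\nu[\![\hbar]\!]),
\qquad
\mathrm{HH}^\bullet(A_\hbar)\cong\mathrm{HH}_{n-\bullet}(A_\hbar,(A_\hbar)_\sigma),
\]
i.e.\ they relate \emph{untwisted} cohomology to twisted homology. They do not convert the twisted-coefficient cohomology you obtained in Step~2, namely $\mathrm{HP}^\bullet(A[\![\hbar]\!],A_\nu[\![\hbar]\!])$ and $\mathrm{HH}^\bullet(A_\hbar,(A_\hbar)_\sigma)$, into the desired twisted homologies; composing would require a duality of the shape $\mathrm{HP}^\bullet(A,A_\nu)\cong\mathrm{HP}_{n-\bullet}(A,A_{?})$, which is not what you have. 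So the chain does not close.

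There are two clean fixes. Either (i) use the homology version of Chemla's theorem directly, as the paper does, and drop Step~3 altogether; or (ii) discard Steps~1--2 and instead sandwich Kontsevich's untwisted isomorphism $\mathrm{HP}^\bullet(A[\![\hbar]\!])\cong\mathrm{HH}^\bullet(A_\hbar)$ between the two Poincar\'e dualities. Route~(ii) is valid but is precisely the commutative square of Theorem~\ref{thm:DQ+PDforpoly}, which in the paper's logical order is proved \emph{using} the present lemma; taking that route would reorganise the dependencies so that this lemma becomes a corollary of the other three edges of diagram~\eqref{diag:DQ+PDforpoly} rather than an independent input.
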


\begin{proof}
By above lemma, $(A_\nu)_\hbar=(A_\hbar)_\sigma$. The lemma
now follows Chemla's result Theorem \ref{thm:Chemla}.
\end{proof}

\begin{theorem}\label{thm:DQ+PDforpoly}
Suppose $A=\mathbb{R}[x_1,\ldots,x_n]$ is a Poisson algebra. Then
the diagram
\begin{equation}\label{diag:DQ+PDforpoly}
\begin{split}
\xymatrixcolsep{3pc}
\xymatrix{
\mathrm{HP}^\bullet(A[\![\hbar]\!])\ar[r]^-{\cong}\ar[d]^{\cong}
&\mathrm{HP}_{\bullet}(A[\![\hbar]\!],A_{\nu}[\![\hbar]\!])\ar[d]^{\cong}\\
\mathrm{HH}^{\bullet}(A_{\hbar})\ar[r]^-{\cong}&\mathrm{HH}_\bullet(A_{\hbar},(A_{\nu})_{\hbar})
}
\end{split}
\end{equation}	
commutes.
\end{theorem}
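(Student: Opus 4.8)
The plan is to prove the square commutes by realizing all four arrows on the chain level and checking that they intertwine; since each arrow is already known to be an isomorphism, the only content is commutativity. Write $T$ and $B$ for the top and bottom horizontal arrows and $L$, $R$ for the left and right vertical arrows. First I would fix chain-level models: the arrow $L$ is the tangent map of Kontsevich's $L_\infty$-quasi-isomorphism $\mathcal U$ producing \eqref{eq:isoofPoissonandHochschild}; the arrow $R$ is Chemla's isomorphism \eqref{eq:isooftwistedhomology}, realized by the chain-level (homology) analogue $\mathcal S$ of $\mathcal U$ after the identification $(A_\nu)_\hbar=(A_\hbar)_\sigma$ of Lemma~\ref{lemma:Dolgu}; the arrow $T$ is the contraction $\iota_{(-)}\eta$ with the volume form $\eta=dx_1\wedge\cdots\wedge dx_n$ of Theorem~\ref{thm:twistedPD1}; and $B$ is the cap product with a Hochschild fundamental class implementing Brown--Zhang's duality (Theorem~\ref{thm:BZ}).

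The essential input is the compatibility of the Kontsevich cochain formality with the corresponding chain-level formality with respect to contraction: the pair $(\mathcal U,\mathcal S)$ is a morphism of differential calculi in the sense of Tamarkin--Tsygan, so that for a polyvector $\varphi$ and a form $\omega$ one has $\mathcal S(\iota_\varphi\omega)=\mathcal U(\varphi)\cap\mathcal S(\omega)$ up to coherent homotopy; this is the compatibility of formality with cap products used in \cite{CCEY}. Granting this, I would apply it to $\omega=\eta$. Since $\eta$ is a Poisson cycle in the twisted complex $\mathrm{CP}_\bullet^0(A,A_\nu)$ and $\mathcal S$ is a quasi-isomorphism, $\mathcal S(\eta)$ is a distinguished Hochschild cycle, and I would take $B$ to be cap product with $\mathcal S(\eta)$. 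With this choice the compatibility gives, for every $\varphi$,
\[
R\big(T(\varphi)\big)=\mathcal S(\iota_\varphi\eta)=\mathcal U(\varphi)\cap\mathcal S(\eta)=B\big(L(\varphi)\big),
\]
which is exactly the commutativity of the square. It remains only to check that cap product with $\mathcal S(\eta)$ genuinely implements the Brown--Zhang duality, and this follows because it reduces modulo $\hbar$, via the Hochschild--Kostant--Rosenberg isomorphism, to the classical contraction with $\eta$, which is an isomorphism; being an isomorphism of free $\mathbb R[\![\hbar]\!]$-modules is detected modulo $\hbar$, so cap with $\mathcal S(\eta)$ is an isomorphism and coincides with the duality of Theorem~\ref{thm:BZ}.

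The hard part will be the calculus-morphism statement $\mathcal S(\iota_\varphi\omega)=\mathcal U(\varphi)\cap\mathcal S(\omega)$ in the present deformed, twisted setting. The matching of the two coefficient systems is not itself an obstacle: Lemma~\ref{lemma:Dolgu}, through Dolgushev's identity $\sigma=\exp(\hbar\nu)$, gives $(A_\nu)_\hbar=(A_\hbar)_\sigma$, so the right-hand column literally computes the same homology and the two dualities carry the same coefficients; the modular vector therefore contributes no new difficulty once Dolgushev's identification is in place. What must be supplied is the homotopy coherence of $(\mathcal U,\mathcal S)$ against contraction, applied to the formal Poisson structure $\pi_\hbar$ and the module $A_\nu[\![\hbar]\!]$; I would either invoke the compatibility of formality with cap products in the form used in \cite{CCEY} and propagate it order by order in $\hbar$ using $\mathbb R[\![\hbar]\!]$-linearity and the module-map property of $\mathcal S$, or, where that reference does not apply verbatim, verify directly that the twist by $A_\nu$ on the Poisson side is transported by $\mathcal S$ to the twist by $\sigma$ on the Hochschild side, which is precisely the content of Lemma~\ref{lemma:Dolgu}.
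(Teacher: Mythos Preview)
Your proposal is correct and follows essentially the same approach as the paper: identify the four arrows via twisted Poincar\'e duality, Kontsevich's tangent map, Chemla's isomorphism, and Van den Bergh/Brown--Zhang duality, and deduce commutativity from the compatibility of the formality maps with contraction/cap product. The paper's own proof is considerably terser---it invokes Dolgushev's result that $A_\hbar$ is AS-regular over $\mathbb R[\![\hbar]\!]$ with Nakayama $\sigma$ to obtain the bottom arrow, and then asserts commutativity in one line by noting that the horizontal maps are chain-level isomorphisms while the vertical ones are Hochschild--Kostant--Rosenberg quasi-isomorphisms---whereas you spell out the underlying mechanism (the calculus-morphism identity $\mathcal S(\iota_\varphi\eta)=\mathcal U(\varphi)\cap\mathcal S(\eta)$) and verify that cap with $\mathcal S(\eta)$ really is the Brown--Zhang duality by reduction mod~$\hbar$. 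Your account is thus a more careful unpacking of the same argument rather than a different route.
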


\begin{proof}
Dolgushev showed
that $A_\hbar$
satisfies the following
$$
\mathrm{Ext}^i_{A_\hbar\otimes_{\mathbb{R}[\![\hbar]\!]}
A_{\hbar}^{op}}(A_\hbar, A_\hbar\otimes
_{\mathbb{R}[\![\hbar]\!]} A_\hbar^{op})
\cong\left\{
\begin{array}{cl}
(A_\hbar)_\sigma,&i=n,\\
0,&\mbox{otherwise},
\end{array}
\right.
$$
where $\sigma$ is the deformation quantization of $\nu$ as in the previous two lemmas
(see Dolgushev \cite[Proposition 2]{Dolgushev}).
This means $A_\hbar$ is an AS-regular algebra over $\mathbb{R}[\![\hbar]\!]$
of dimension $n$, which then implies the noncommutative Poincar\'e duality
(see Theorem \ref{thm:BZ})
$$
\mathrm{HH}^\bullet(A_\hbar)\cong\mathrm{HH}_{n-\bullet}(A_\hbar, (A_\hbar)_\sigma).
$$
Combining \eqref{eq:twistedPD1}, \eqref{eq:isoofPoissonandHochschild}
and \eqref{eq:isooftwistedhomology} we get the isomorphisms in \eqref{diag:DQ+PDforpoly}.

Chemla proved in \cite[Theorem 10]{Chemla} that
for any Poisson $A$-bimodule $M$,
there is a quasi-isomorphism of $L_\infty$-modules
from the modules over $T_{\mathrm{poly}}(A)$
to modules over $D_{\mathrm{poly}}(A)$,
which she denotes by $T_{\mathrm{poly}}(M)$
and $D_{\mathrm{poly}}(M)$ respectively.
Such a quasi-isomorphism generalizes
the $L_\infty$-quasi-isomorphism from $T_{\mathrm{poly}}(A)$
to $D_{\mathrm{poly}}(A)$ of Kontsevich.
Then by a similar argument to that of Kontsevich
she gets the above Theorem \ref{thm:Chemla}, which is more precisely the
following commutative diagram
\begin{equation*}
\xymatrixcolsep{3pc}
\xymatrix{
\mathrm{HP}^\bullet(A[\![\hbar]\!])\ar@{~>}[r]\ar[d]^{\cong}
&\mathrm{HP}_{\bullet}(A[\![\hbar]\!],M[\![\hbar]\!])\ar[d]^{\cong}\\
\mathrm{HH}^{\bullet}(A_{\hbar})\ar@{~>}[r]&\mathrm{HH}_\bullet(A_{\hbar},M_{\hbar}),
}
\end{equation*}	
where the horizontal curved arrows mean the Lie algebra actions.
Restricting to the case where $M=A_\nu$, with the Poincar\'e duality
taken into account we get the commutativity of \eqref{diag:DQ+PDforpoly}.
\end{proof}

\subsubsection{Diagonalizable Nakayama automorphism}
Now we study the deformation quantization
of diagonalizable modular fields. For an associative
$k$-algebra $A$ and $\sigma\in\mathrm{Aut}(A)$,
recall that $\sigma$ is called {\it diagonalizable}
(or sometimes {\it semi-simple}) if there is a subset 
$\Sigma\subseteq k\backslash\{0\}$
and a decomposition of $k$-vector spaces
$$A=\bigoplus_{\lambda\in\Sigma}A_\lambda,\quad
A_\lambda=\{a\in A\,|\,\sigma(a)=\lambda a\}.$$ 
(See \cite[\S7.3]{KK} and also compare it with the Poisson
case given in \S\ref{subsect:diagonalizble}.)
The following is straightforward.

\begin{lemma}\label{lemma:DQofmodularsym}
Suppose $A$ is a Poisson algebra. Let $A_\hbar$ be its deformation
quantization.
If $\nu$ is diagonalizable, then $\sigma=\exp(\hbar\nu)$ is the diagonalizable
Nakayama automorphism of $A_\hbar$.
\end{lemma}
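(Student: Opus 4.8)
The plan is to prove this statement in two halves, corresponding to the two assertions it makes: first that $\sigma = \exp(\hbar\nu)$ (which is essentially the content of Lemma \ref{lemma:Dolgu} and Dolgushev's result already cited), and second---the genuinely new point here---that this $\sigma$ is \emph{semisimple} as an automorphism of $A_\hbar$ precisely when $\nu$ is semisimple as a derivation of $A$. The first half I would handle by simply invoking Lemma \ref{lemma:Dolgu}, which identifies $\sigma$ with the deformation quantization of $\nu$ under Kontsevich's $L_\infty$-quasiisomorphism $\mathcal{U}$, together with Dolgushev's computation that this quantized cocycle is exactly $\exp(\hbar\nu)$. So the substance of the proof is the semisimplicity claim.

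For the second half, the key observation is that $\nu$, being semisimple, is diagonalizable over $A$, so we have the eigenspace decomposition $A = \bigoplus_{\lambda} A_\lambda$ with $\nu|_{A_\lambda} = \lambda \cdot \mathrm{Id}$, exactly as used in the construction of the weight decomposition \eqref{eq:decompforchain}. First I would check that $\exp(\hbar\nu)$ is a well-defined algebra automorphism of $A_\hbar = A[\![\hbar]\!]$: since $\nu$ is a Poisson derivation of $A$, the series $\exp(\hbar\nu) = \sum_{m\ge 0} \tfrac{\hbar^m}{m!}\nu^m$ converges in the $\hbar$-adic topology and, because $\nu$ is a derivation commuting with the star-product (being the quantization of a Poisson derivation), $\exp(\hbar\nu)$ is multiplicative for $\star$. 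Next, on each eigenspace $A_\lambda$ we have $\nu^m|_{A_\lambda} = \lambda^m \cdot \mathrm{Id}$, so that
$$
\exp(\hbar\nu)\big|_{A_\lambda} = \exp(\hbar\lambda)\cdot\mathrm{Id} = e^{\hbar\lambda}\cdot\mathrm{Id},
$$
which shows that $A_\lambda[\![\hbar]\!]$ is an eigenspace of $\sigma$ with eigenvalue $e^{\hbar\lambda}\in\mathbb{R}[\![\hbar]\!]$. Taking the $\hbar$-adically completed direct sum of these eigenspaces recovers all of $A_\hbar$, exhibiting $\sigma$ as diagonalizable, i.e. semisimple.

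The main obstacle I anticipate is \emph{topological/analytic} rather than algebraic: one must be careful that the eigenspace decomposition $A = \bigoplus_\lambda A_\lambda$ interacts correctly with $\hbar$-adic completion, and that distinct eigenvalues $\lambda \ne \lambda'$ still give \emph{distinct} eigenvalues $e^{\hbar\lambda} \ne e^{\hbar\lambda'}$ in $\mathbb{R}[\![\hbar]\!]$ (which holds since $e^{\hbar\lambda}-e^{\hbar\lambda'} = (\lambda-\lambda')\hbar + O(\hbar^2) \ne 0$ when $\lambda\ne\lambda'$), so that the decomposition into $\sigma$-eigenspaces is honest. A secondary point to verify carefully is that $\sigma$ acts as an automorphism of $A_\hbar$ with the \emph{star}-product, not merely of the underlying commutative algebra; this is where the hypothesis that $\nu$ is a \emph{Poisson} derivation, and hence lifts compatibly through $\mathcal{U}$ to a derivation of $\star$, is essential. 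Once these topological subtleties are dispatched, the semisimplicity is immediate from the explicit eigenvalues $e^{\hbar\lambda}$ computed above.
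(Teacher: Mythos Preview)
Your proposal is correct and is essentially what the paper has in mind: the paper's entire proof of this lemma is the single word ``Straightforward,'' and you have supplied the obvious details (invoking Lemma~\ref{lemma:Dolgu} for $\sigma=\exp(\hbar\nu)$ and then observing that the $\nu$-eigenspace decomposition $A=\bigoplus_\lambda A_\lambda$ yields $\sigma|_{A_\lambda}=e^{\hbar\lambda}\cdot\mathrm{Id}$). The topological subtleties you flag are real but minor, and the paper does not address them either.
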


Kowalzig and Kr\"ahmer proved in \cite[Theorem 1.5]{KK}
that, {\it for an AS-regular algebra with diagonalizable Nakayama automorphism,
its Hochschild cohomology has a Batalin-Vilkovisky algebra structure,
whose Batalin-Vilkovisky operator generates the Gerstenhaber
bracket on the cohomology}. Thus in the light of Lemma \ref{lemma:DQofmodularsym},
combining this result with
Theorem \ref{thm:DQ+PDforpoly} we obtain the following.

\begin{theorem}\label{thm:DQ+BVforPoisson}
Suppose $A$ is a Poisson algebra. Let $A_\hbar$ be its deformation
quantization.
If $A$ has a diagonalizable modular vector field,
then we have an isomorphism
\begin{equation}
\mathrm{HP}^\bullet(A[\![\hbar]\!])\cong\mathrm{HH}^\bullet(A_\hbar)
\end{equation}
of Batalin-Vilkovisky algebras.
\end{theorem}

\section{Frobenius Poisson algebras}\label{sect:FrobPoisson}

In \cite{ZVOZ}, Zhu, Van Oystaeyen and Zhang introduced
the notion of Frobenius Poisson algebras, that
is, Poisson algebras with a non-degenerate pairing,
and studied the structures on their (co)homology.
In this subsection, we
study these algebras with diagonalizable
modular vector fields, and their twisted Poincar\'e duality,
Koszul duality and deformation quantization.

\subsection{Modular symmetry and Poincar\'e duality}

Let us start with the definition of Frobenius algebras.

\begin{definition}[Frobenius algebra]\label{def:Frobeniusalg}
A finite dimensional graded associative $k$-algebra
$A$ is called {\it Frobenius} of dimension $n$
if it is equipped with a bilinear, non-degenerate pairing of degree $n$
$$
\langle-,-\rangle: A \otimes A\to k
$$
such that
$
\langle a\cdot b, c\rangle=\langle a, b\cdot c\rangle
$, for all homogeneous $a, b, c\in A$.
\end{definition}

Suppose $A$ is a Frobenius algebra, then the nondegeneracy of the pairing
in the above definition is equivalent to saying that there is
an isomorphism
$$\eta: A\to A^*,\, a\mapsto \langle -, a\rangle$$
of left $A$-modules, but not necessarily an isomorphism of $A$-bimodules,
where $A^*:=\mathrm{Hom}(A, k)$. We shall discuss this more in \S\ref{subsect:DQofFPA}
below.

\begin{example}
Suppose $A^!=\mathbf{\Lambda}(\xi_1,\cdots,\xi_n)$ is the exterior algebra;
in what follows we view it as the graded symmetric algebra generated by
$\xi_1,\cdots,\xi_n$ with each grading $|\xi_i|=-1$.
There is a degree $n$ $A^!$-module isomorphism
\begin{eqnarray*}
\eta^!:A^{!}\rightarrow  A^{\ac},\;
\xi_{i_1}\cdots \xi_{i_p}\mapsto \eta^!(\xi_{i_1}\cdots \xi_{i_p})
\end{eqnarray*}
where
$$\eta^!(\xi_{i_1}\cdots \xi_{i_p}):=\sum_{s\in S_{p, n-p}}\langle\xi_{i_1}\cdots \xi_{i_p},
\xi^*_{s_1}\cdots \xi^*_{s_p}\rangle\cdot\xi^*_{s_{p+1}}\cdots \xi^*_{s_n},
$$
$A^{\ac}:=(A^{!})^*$, $\xi_i^*$'s are the linear duals of $\xi_i$'s, for $i=1,\cdots, n$,
and the sum runs over all $(p, n-p)$-shuffles $s$ of $(1,\cdots, n)$.
Recall that a $(p, n-p)$-{\it shuffle} is a permutation $s$ of $(1, \cdots, n)$
such that $s_1<\cdots< s_p, s_{p+1}<\cdots<s_n$.
It is direct to see that $\eta^!$
is non-degenerate and hence gives a Frobenius algebra structure on $A^!$.
We also write $\eta^!$ in the form
$\xi_1^*\cdots \xi_n^*$, and call it the volume form
of $A^!$.
\end{example}

\begin{definition}[Zhu-Van Oystaeyen-Zhang \cite{ZVOZ}]
A graded Poisson algebra $A$
is called {\it Frobenius Poisson}
if it is moreover a Frobenius algebra.
\end{definition}

For a Frobenius Poisson algebra, say $A$,
there is a differential calculus structure associated to it, which
is different to the one given in Example \ref{ex:differentialcalculusonPoisson}.
In fact, suppose $A$ is a Frobenius Poisson algebra.
Then
any $f\in\mathfrak X^{p}(A)$ and $\alpha\in\mathfrak X^{q}(A; A^*)$,
let
$
f\cap \alpha\in\mathfrak X^{p+q}(A; A^*)
$
be given by
\begin{equation}\label{def:innerproductondualvectors}
(f\cap\alpha)(a_1, \cdots, a_{p+q}):=
\sum_{s\in S_{p,q}}\mathrm{sgn}(\sigma)
f(a_{s_1},\cdots, a_{s_p})\cdot
\alpha(a_{s_{p+1}},\cdots, a_{s_{p+q}}),
\end{equation}
where $\sigma$ runs over all $(p,q)$-shuffles of $(1,\cdots, p+q)$.
Observing that
\begin{eqnarray*}
\mathfrak X^\bullet_A(A^*)&=&\mathrm{Hom}_A(\Omega^\bullet(A), A^*)\\
&=&\mathrm{Hom}_A\big(\Omega^\bullet(A), \mathrm{Hom}_k(A, k)\big)\\
&=&\mathrm{Hom}_k(A\otimes_A\Omega^\bullet(A) , k)\\
&=&\mathrm{Hom}_k(\Omega^\bullet(A), k).
\end{eqnarray*}
We dualize the de Rham differential $d$ on $\Omega^\bullet(A)$
and obtain a differential $d^*$ on $\mathrm{Hom}(\Omega^\bullet(A), k)$,
i.e., on $\mathfrak X^\bullet(A; A^*)$, which commutes with the Poisson coboundary
(see \cite[Theorem 4.10]{ZVOZ} for a proof).
The following proposition is obtained by Zhu-Van Oystaeyen-Zhang
(see \cite[\S3-4]{ZVOZ} for a complete proof).

\begin{proposition}Let $A$ be a Frobenius Poisson algebra. Then
$$
(\mathrm{HP}^\bullet (A), \mathrm{HP}^\bullet(A, A^*),\cup,\{-,-\}, \cap, d^*)
$$
form a differential calculus, where
$\cup$ and $\{-,-\}$ are as in the above example, and $\cap$ is $\iota$
given by \eqref{eq:defiota} and $d^*$ is the dual de Rham differential
given by \eqref{diag:unimodularPoisson222}.
\end{proposition}

In what follows we denote by $A^!$ a Frobenius Poisson algebra,
and by $A^{\ac}$ its linear dual.
From the nondegeneracy of the pairing
we in fact get an isomorphism
$\eta^!: A^!\to A^{\ac}$
which further induces an isomorphism of vector spaces
$$
\iota_{(-)}\eta^!:
\mathfrak{X}^\bullet_{A^!}(A^!)\to
\mathfrak{X}^\bullet_{A^!}(A^\ac)
$$
given by
\begin{equation}\label{eq:defiota}
\iota_{\varphi}\eta^!:=\{(a_1,\cdots, a_p)\mapsto \eta^!(\varphi(a_1,\cdots,a_p))\},
\quad\mbox{for}\; \varphi\in\mathfrak X^p(A^!),\; a_1,\cdots,a_p\in A^!.
\end{equation}
Again, $\iota_{(-)}\eta^!$ gives the following diagram
\begin{equation}\label{diag:unimodularFrobPoisson}
\begin{split}
\xymatrixcolsep{3pc}
\xymatrix{
\mathfrak{X}^\bullet_{A^!}(A^!)\ar[r]^-{\iota_{(-)}\eta^!}_{\cong}\ar[d]_{\delta}&
\mathfrak{X}^{\bullet}_{A^!}(A^\ac)\ar[d]^{\delta}\\
\mathfrak{X}^{\bullet+1}_{A^!}(A^!)\ar[r]^-{\iota_{(-)}\eta^!}_{\cong}&\mathfrak{X}^{\bullet+1}_{A^!}(A^\ac).
}
\end{split}
\end{equation}
of vector spaces, which
in general does not commute with the boundaries on each side,
since $\eta^!$ is not a Poisson cocycle.
To adjust this discrepancy, we do the same procedure
as in the Poisson algebra case.
Namely, let
$$
\mathrm{Div}: \mathfrak X_{A^!}^\bullet(A^!)\to\mathfrak X_{A^!}^{\bullet-1}(A^!)
$$
be such that the following diagram
\begin{equation}\label{diag:unimodularPoisson222}
\begin{split}
\xymatrixcolsep{3pc}
\xymatrix{
\mathfrak{X}_{A^!}^\bullet(A^!)\ar[r]^-{\iota_{(-)}\eta^!}_{\cong}\ar[d]_{\mathrm{Div}}
&\mathfrak X_{A^!}^{\bullet}(A^{\ac})\ar[d]_{d^*}\\
\mathfrak{X}_{A^!}^{\bullet-1}(A^!)\ar[r]^-{\iota_{(-)}
\eta^!}_{\cong}&\mathfrak X_{A^!}^{\bullet-1}(A^{\ac}),
}
\end{split}
\end{equation}
commutes, where $d^*$ is the dual of the de Rham differential.
Let $\nu^!=-\mathrm{Div}(\pi^!)$, which is also called
the {\it modular vector field} of $A^!$. Analogously to Lemma \ref{TwistedPoincare},
for any $\varphi\in \mathfrak{X}^p(A^!)$, we have
\begin{equation}\label{formula:XuinFrobcase}
\partial(\varphi\cap\eta^!)+\nu^!(\varphi\cap\eta^!)=\delta(\varphi)\cap\eta^!.
\end{equation}
Combining \eqref{diag:unimodularFrobPoisson}
and \eqref{formula:XuinFrobcase}, with the appropriate
degree on the cohomology
taken into account,
yields the following.

\begin{theorem}[\cite{LWW2} \S3.1]\label{thm:twistedPD2}
Let $A^!$ be a Poisson exterior algebra, and $\nu^!$
be the corresponding modular vector field. Then
\begin{equation}\label{eq:twistedPD2}
\mathrm{HP}^\bullet(A^!)\cong\mathrm{HP}^{\bullet-n}(A^!,A^{\ac}_{\nu^!}).
\end{equation}
\end{theorem}

Now let us move to the Frobenius Poisson algebra case.
Let $A^!$ be a Frobenius Poisson algebra with diagonalizable modular vector field $\nu^!$.
The following three statements are completely parallel to
Lemma \ref{lemma:homotopy}--Theorem \ref{thm:BVforPoisson},
and we leave their proofs to the interested reader.

\begin{lemma}
On the Poisson cochain complex $\mathrm{CP}^\bullet(A^!,A^\ac_{\nu^!})$, we have
$$
\partial_{\nu^!}\circ d^*+d^*\circ\partial_{\nu^!}=\tilde \nu^!.
$$
\end{lemma}

\begin{corollary}
$(\mathrm{HP}^\bullet(A^!),\mathrm{HP}^\bullet(A^!,A^\ac_{\nu^{!}}), \cup, \{-,-\},\iota, d^*)$
forms a differential calculus with duality.
\end{corollary}

\begin{theorem}[See also \cite{WWZZ}]\label{thm:BVforFrobPoisson}
Suppose $A^!$ is a Frobenius
Poisson algebra with diagonalizable
modular vector field $\nu^!$, then
$\mathrm{HP}^\bullet(A^!)$ has a Batalin-Vilkovisky
algebra structure whose Batalin-Vilkovisky operator generates
the Schouten bracket.
\end{theorem}

\subsection{Koszul duality for Poisson algebras}

From now on, we focus on {\it quadratic} Poisson algebras.
As we mentioned before, Shoikhet showed that the Koszul
dual of a quadratic Poisson polynomial algebra is again quadratic
Poisson.
In this section, we study the modular symmetry under Koszul duality,
and the main result is Theorem \ref{thm:Koszul+PDforPoisson}.

\begin{definition}
Let $A=\mathbb R[x_1,\cdots, x_n]$ be the real polynomial algebra in $n$ variables.
A Poisson structure on $A$, say $\pi$, is called {\it quadratic} if it is of the form
\begin{equation}\label{formula:quadratic_Poisson}
\pi=\sum_{i_i,i_2,j_1,j_2}c_{i_1i_2}^{j_1j_2}x_{i_1}x_{i_2}
\frac{\partial}{\partial x_{j_1}}\wedge\frac{\partial}{\partial x_{j_2}},
\quad c_{i_1i_2}^{j_1j_2}\in\mathbb R.
\end{equation}
\end{definition}

\begin{definition}\label{def:quadraticdualPoisson}
If $A=\mathbb R[x_1,\cdots, x_n]$ is the polynomial algebra
with a quadratic bivector
$$\pi=\sum_{i_i,i_2,j_1,j_2}c_{i_1i_2}^{j_1j_2}x_{i_1}x_{i_2}
\frac{\partial}{\partial x_{j_1}}\wedge\frac{\partial}{\partial x_{j_2}},$$
then its {\it Koszul dual}, denoted by $A^!$, is the graded symmetric
algebra
$$A^!=\mathbf\Lambda(\xi_1,\cdots,\xi_n),\quad |\xi_i|=-1, i=1,\cdots, n$$
with the dual bivector
\begin{equation}\label{formula:quadratic_dualPoisson}
\pi^!=\sum_{i_1,i_2,j_1,j_2} c_{i_1i_2}^{j_1j_2}\xi_{j_1}\xi_{j_2}\frac{\partial}{\partial
\xi_{i_1}}\frac{\partial}{\partial \xi_{i_2}}.
\end{equation}
Under the correspondence
\begin{equation}\label{eq:correspond}
x_i\leftrightarrow \frac{\partial}{\partial \xi_i}\quad\mbox{and}\quad
\frac{\partial}{\partial x_i}\leftrightarrow \xi_i
\end{equation}
between
the sets of polyvectors on $A$ and on $A^!$,
it is direct to check that $\pi$ is Poisson if and only if $\pi^!$
is Poisson. We call $(A^!,\pi^!)$ the {\it Koszul dual Poisson algebra} of $(A,\pi)$.
\end{definition}

\begin{proposition}[See also \cite{CCEY}]\label{prop:KoszulforPoisson}
Let $(A, \pi)$ and $(A^!, \pi^!)$ be the quadratic Poisson algebras
Koszul dual to each other as given in Definition
\ref{def:quadraticdualPoisson}. Then we have isomorphisms
$$
\mathrm{HP}^\bullet(A)\cong\mathrm{HP}^\bullet(A^!)\quad
\mbox{and}\quad
\mathrm{HP}_\bullet(A)\cong\mathrm{HP}^{-\bullet}(A^!, A^{\ac}),
$$
where $A^{\ac}$ is $(A^!)^*=\mathrm{Hom}(A^!, k)$.
\end{proposition}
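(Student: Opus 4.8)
The plan is to establish both isomorphisms at the level of the defining complexes, by extending the generator-level correspondence \eqref{eq:correspond} to an explicit isomorphism of bigraded vector spaces and then checking that it intertwines the relevant differentials. In each case the differential is built from the Poisson bivector via a Schouten-type operation, and $\pi$ and $\pi^!$ correspond under \eqref{eq:correspond} by the very formulas \eqref{formula:quadratic_Poisson} and \eqref{formula:quadratic_dualPoisson}; so the isomorphism on cohomology will follow once the bracket operations are shown to match.

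For the first isomorphism $\mathrm{HP}^\bullet(A)\cong\mathrm{HP}^\bullet(A^!)$, I would first record the bigraded identifications
$$\mathfrak{X}^\bullet_A(A)=k[x_1,\dots,x_n]\otimes\textstyle\bigwedge(\partial_{x_1},\dots,\partial_{x_n}),\qquad \mathfrak{X}^\bullet_{A^!}(A^!)=\mathbf{\Lambda}(\xi_1,\dots,\xi_n)\otimes k[\partial_{\xi_1},\dots,\partial_{\xi_n}],$$
and define $\Phi\colon\mathfrak{X}^\bullet_A(A)\to\mathfrak{X}^\bullet_{A^!}(A^!)$ on generators by \eqref{eq:correspond}, extended multiplicatively. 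This interchanges the polynomial degree in the $x_i$ with the polyvector order in the $\partial_{\xi_i}$, and the polyvector order in the $\partial_{x_i}$ with the internal exterior degree in the $\xi_i$. The Lichnerowicz differential is $\delta^\pi=[\pi,-]$ for the Schouten--Nijenhuis bracket, whose only nontrivial value on generators is $[\partial_{x_i},x_j]=\delta_{ij}$; under $\Phi$ this matches $[\xi_i,\partial_{\xi_j}]=\pm\delta_{ij}$, the sole nontrivial bracket of generators on the $A^!$ side. Hence $\Phi$ is, up to signs, an isomorphism of Gerstenhaber algebras sending $\pi$ to $\pi^!$, so it intertwines $\delta^\pi$ and $\delta^{\pi^!}$ and descends to the claimed isomorphism in cohomology.

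For the second isomorphism $\mathrm{HP}_\bullet(A)\cong\mathrm{HP}^\bullet(A^!,A^{\ac})$, I would set up the analogous dictionary between the Koszul--Brylinski complex $(\Omega^\bullet_A(A),\partial_\pi)$ and the Poisson cochain complex $(\mathfrak{X}^\bullet_{A^!}(A^{\ac}),\delta^{\pi^!})$, using the identification $\mathfrak{X}_{A^!}(A^{\ac})=\mathrm{Hom}(\Omega^\bullet(A^!),k)$ recorded in \eqref{diag:unimodularPoisson222}. Concretely, I would extend \eqref{eq:correspond} by sending $x_i\mapsto\partial_{\xi_i}$ and $dx_i\mapsto\xi_i^*\in A^{\ac}$, so that a form $x^a\,dx_{i_1}\wedge\cdots\wedge dx_{i_q}$ is carried to the $A^{\ac}$-valued polyvector $\partial_\xi^{a}\otimes\xi^*_{i_1}\cdots\xi^*_{i_q}$. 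This again swaps the polynomial degree with the polyvector order and the de Rham form degree with the exterior degree of the dual coefficient, and it remains to verify that the boundary $\partial_\pi$ is carried to the differential $\delta^{\pi^!}$ on the dual module.

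The routine part is the vector-space bookkeeping; the main obstacle is the sign discipline forced by the internal grading $|\xi_i|=-1$ and, especially in the second isomorphism, the verification that the Koszul--Brylinski boundary $\partial_\pi$ corresponds to the Lichnerowicz differential on $A^{\ac}$. For the first isomorphism this differential-matching is exactly the ``direct check'' already flagged after \eqref{formula:quadratic_dualPoisson} in Definition \ref{def:quadraticdualPoisson}, amounting to the compatibility of the two Schouten brackets under \eqref{eq:correspond}. For the second, one must dualize the de Rham differential on $A^!$ and carefully match the Koszul signs coming from reordering the $dx_i$ against $\delta^{\pi^!}$ acting on $A^{\ac}=(A^!)^*$; this is where I expect the bulk of the verification to lie.
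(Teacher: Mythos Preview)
Your proposal is correct and follows essentially the same route as the paper's proof: both identify the Poisson (co)chain complexes as graded-symmetric algebras on the generators $x_i,\partial_{x_i}$ (resp.\ $\xi_i,\partial_{\xi_i}$, and $\xi_i^*$ for the coefficient module $A^{\ac}$), transport the correspondence \eqref{eq:correspond} multiplicatively, and then reduce the compatibility of differentials to the matching of Schouten brackets on generators, which the paper likewise records as a ``direct check.'' Your discussion is in fact more explicit than the paper's about \emph{why} the differentials agree (via $\delta^\pi=[\pi,-]$ and the generator brackets), and your caution about the Koszul sign bookkeeping in the second isomorphism is well placed, but there is no substantive difference in strategy.
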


\begin{proof}
Since $A=\mathbb R[x_1,\cdots,x_n]$, we have
\begin{eqnarray}\label{Poissoncochainforpolynomials}
\mathfrak X^\bullet(A)
=\mathbf\Lambda\Big(x_1,\cdots,x_n,
\frac{\partial}{\partial x_1},\cdots,\frac{\partial}{\partial x_n}\Big)
\end{eqnarray}
and similarly,
\begin{eqnarray}\label{Poissoncochainforexteriorpolynomials}
\mathfrak X^\bullet(A^!)&=&
\mathbf{\Lambda}
\Big(\xi_1,\cdots,\xi_n,
\frac{\partial}{\partial \xi_1},\cdots,\frac{\partial}{\partial \xi_n}\Big),
\end{eqnarray}
where the gradings are given as follows:
$$
|x_i|=0, \left|\frac{\partial}{\partial x_i}\right|=-1,
|\xi_i|=-1,\left|\frac{\partial}{\partial \xi_i}\right|=0, \quad
i=1,\cdots, n.$$
Under the correspondence \eqref{eq:correspond}
we obtain an isomorphism of chain complexes
$$
\Psi:\mathrm{CP}^\bullet(A)\cong\mathrm{CP}^\bullet(A^!),
$$
which gives the first isomorphism on the cohomology.

For the second isomorphism, let us notice that
\begin{equation}
\Omega^\bullet(A)=\mathbf\Lambda(x_1,\cdots, x_n, dx_1,\cdots, dx_n)\label{formula:Poissonchaincpx},
\end{equation}
and
$$
\Omega^\bullet(A^!)=\mathbf\Lambda(\xi_1,\cdots,\xi_n,d\xi_1,\cdots, d\xi_n),
$$
where
$|dx_i|=1,|d\xi_i|=0$ for $i=1,\cdots, n$.
We therefore have
\begin{eqnarray}
\mathfrak X^\bullet_{A^!}(A^{\ac})
=\mathrm{Hom}_{A^!}(\Omega^\bullet(A^!), A^{\ac})
\cong\mathrm{Hom}_k(\Omega^\bullet(A^!), k)=\mathbf\Lambda\Big(
\frac{\partial}{\partial \xi_1},\cdots,
\frac{\partial}{\partial\xi_n},
\xi_1^*,\cdots,\xi_n^*\Big),\label{formula:coPoissoncochaincpx}
\end{eqnarray}
where $|\xi_i^*|=1$.
Thus under the correspondence \eqref{eq:correspond}
together with $dx_i\leftrightarrow \xi_i^*$
we get a canonical grading preserving isomorphism of vector spaces:
$$
\Phi:\Omega^\bullet(A)\to\mathfrak X_{A^!}^{\bullet}(A^{\ac}),
\;
x_i\mapsto\frac{\partial}{\partial \xi_i},\;
dx_i\mapsto\xi_i^*.
$$
It is a direct check that $\Phi$ is a chain map, and
thus we obtain an
isomorphism of Poisson complexes which then induces an isomorphism on homology
$\mathrm{HP}_\bullet(A)\cong\mathrm{HP}^{-\bullet}(A^!, A^{\ac}).$
\end{proof}

\subsubsection{Koszul duality and modular symmetry}

We now study the behavior of the modular vector field under Koszul duality.

\begin{proposition}\label{prop:isoofmodular}
Suppose $A=(\mathbb R[x_1,\cdots,x_n],\pi)$
and $A^!=(\Lambda(\xi_1,\cdots,\xi_n),\pi^!)$ are Koszul dual Poisson
algebras.
Then under the correspondence \eqref{eq:correspond}
the modular vector field
$\nu$ of $A$ corresponds to $\nu^!$ of $A^!$.
\end{proposition}

\begin{proof}
It is direct to check that the modular vector field \begin{eqnarray*}
 \nu &=&-\mathrm{Div}(\pi)\\
&=&\sum_{1\leq i<l\leq n}c^{il}_{ij}x_j\dfrac{\partial}{\partial x_l}+\sum_{1\leq j<l\leq n}c^{jl}_{ij}x_i\dfrac{\partial}{\partial x_l}
+\sum_{1\leq k<i\leq n}c^{ki}_{ij}x_j\dfrac{\partial}{\partial x_k}+\sum_{1\leq k<j\leq n}c^{kj}_{ij}x_i\dfrac{\partial}{\partial x_k} .
\end{eqnarray*}
On the other hand, we have
\begin{eqnarray*}
\nu^!&=& -\mathrm{Div}(\pi^!)\\
&=&\sum_{1\leq i<l\leq n}c^{il}_{ij}\xi_l\dfrac{\partial}{\partial\xi_j}
+\sum_{1\leq j<l\leq n}c^{jl}_{ij}\xi_l\dfrac{\partial}{\partial\xi_i}
+\sum_{1\leq k<i\leq n}c^{ki}_{ij}\xi_k\dfrac{\partial}{\partial\xi_j}+\sum_{1\leq k<j\leq n}c^{kj}_{ij}\xi_k\dfrac{\partial}{\partial\xi_i}.
\end{eqnarray*}
 Under the identification
\eqref{eq:correspond}
these two modular derivations are isomorphic to each other.
\end{proof}

From the above computation of $\nu$ we also get the following byproduct.

\begin{proposition}
Suppose $A=k[x_1,\cdots,x_n]$ is a Poisson algebra with Poisson structure $\pi$.
Take the volume form to be $\eta=dx_1\wedge dx_2\wedge\cdots\wedge dx_n$.
Then the modular vector field is diagonalizable
 if and only if $\pi$
 is of the form
 $$\pi=\sum_{i,j}c_{ij}x_ix_j\frac{\partial}{\partial x_{i}}\wedge\frac{\partial}{\partial x_{j}}.$$
\end{proposition}

Also as a corollary of Proposition \ref{prop:isoofmodular}, we obtain the following:

\begin{theorem}\label{thm:Koszul+PDforPoisson}
Let $A=\mathbb R[x_1,\cdots, x_n]$ be a quadratic Poisson
algebra, and let $A^!=\Lambda(\xi_1,\cdots,\xi_n)$
be its Koszul dual. Denote by $\nu$ and $\nu^!$
the modular vector fields of $A$ and $A^!$ respectively.
Then the following diagram
\begin{equation}\label{diag:Koszul+PDforPoisson}
\begin{split}
\xymatrixcolsep{3pc}
\xymatrix{
\mathrm{HP}^\bullet(A)\ar[r]^-{\cong} \ar[d]^{\cong}
&\mathrm{HP}_{n-\bullet}(A, A_\nu)\ar[d]^{\cong}\\
\mathrm{HP}^\bullet(A^!)\ar[r]^-{\cong} &\mathrm{HP}^{\bullet-n}(A^{!}, A^{\ac}_{\nu^!})
}
\end{split}
\end{equation}	
commutes.
\end{theorem}

\begin{proof}
With the results in Proposition \ref{prop:KoszulforPoisson}
and Theorems \ref{thm:twistedPD1} and \ref{thm:twistedPD2},
the only
thing that we need to prove is
$$
\mathrm{HP}_\bullet(A, A_\nu)\cong\mathrm{HP}^{-\bullet}(A^!, A^{\ac}_{\nu^!}).
$$
The proof of the second isomorphism in Proposition
\ref{prop:KoszulforPoisson} shows that
$$\mathrm{CP}_\bullet(A, A_\nu)\cong\mathrm{CP}^{-\bullet}(A^!, A^{\ac}_{\nu^!})$$
as chain complexes with respect to the Poisson boundary maps;
now Proposition \ref{prop:isoofmodular}
says that the twistings on both sides of the above complexes are also identical.
Taking the corresponding homology we get the commutative diagram \eqref{diag:Koszul+PDforPoisson},
and the theorem follows.
\end{proof}

\subsection{Deformation quantization of Frobenius Poisson algebras}\label{subsect:DQofFPA}
We now show that the deformation quantizations of Frobenius Poisson algebras
are Frobenius algebras.

In Definition \ref{def:Frobeniusalg} of a Frobenius algebra,
since the pairing is non-degenerate, there exists an automorphism
$\sigma^!$ such that $\langle ab, c\rangle = (-1)^{\vert c\vert(\vert a\vert+\vert b\vert)}
\langle\sigma^!(c)a, b\rangle$. Such a $\sigma^!$ is also called the
{\it Nakayama automorphism}
of $A^!$. The non-degeneracy of the pairing given by is equivalent to saying that
\begin{eqnarray*}
\eta^! :A^!\rightarrow  A^{\ac}_{\sigma^!}[-n], \,
	a\mapsto \langle-,a\rangle
\end{eqnarray*}
is an isomorphism of $A^!$-bimodules.
In 2016, Lambre, Zhou and Zimmermann obtained the following ``noncommutative Poincar\'e duality":

\begin{theorem}[\cite{LZZ} Proposition 3.3]\label{thm:LZZ}
Let $A^!$ be a Frobenius algebra of degree n with Nakayama
automorphism $\sigma^!$. Then there is an isomorphism
\[\mathrm{HH}^\bullet(A^!)\rightarrow \mathrm{HH}^{\bullet-n}(A^!,A^{\ac}_{\sigma^!}).\]
\end{theorem}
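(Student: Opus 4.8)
The plan is to deduce the duality from a single structural input, namely the bimodule isomorphism furnished by the Frobenius form, combined with the functoriality of Hochschild cohomology in its coefficient bimodule. Recall that Hochschild cohomology with coefficients is $\mathrm{HH}^\bullet(A^!, M)=\mathrm{Ext}^\bullet_{(A^!)^e}(A^!, M)$, so that any isomorphism $M\cong M'$ of $A^!$-bimodules induces an isomorphism $\mathrm{HH}^\bullet(A^!, M)\cong\mathrm{HH}^\bullet(A^!, M')$, with no finiteness hypothesis required. Applying this with $M=A^!$ and $M'=A^{\ac}_{\sigma^!}[-n]$ reduces the theorem to producing a bimodule isomorphism $A^!\cong A^{\ac}_{\sigma^!}[-n]$, which is exactly the map $\iota_{(-)}\eta^!\colon a\mapsto\langle-,a\rangle$ recorded just above the statement.

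First I would verify that $\iota_{(-)}\eta^!$ is genuinely a map of $A^!$-bimodules. Left linearity is immediate from associativity of the pairing: the identity $\langle xb,a\rangle=\langle x,ba\rangle$ says precisely that $\iota_{(-)}\eta^!(b\cdot a)=b\cdot\iota_{(-)}\eta^!(a)$ for the natural left action on $A^{\ac}$. Right linearity is where the Nakayama automorphism enters: starting from $\langle x,ac\rangle$, one application of associativity gives $\langle xa,c\rangle$, and then the defining relation $\langle ab,c\rangle=(-1)^{|c|(|a|+|b|)}\langle\sigma^!(c)a,b\rangle$ identifies this with $(-1)^{|c|(|x|+|a|)}\langle\sigma^!(c)x,a\rangle$, which is exactly the value of $\iota_{(-)}\eta^!(a)$ fed through the right $A^!$-action on $A^{\ac}_{\sigma^!}$ (up to the Koszul sign). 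Non-degeneracy of $\langle-,-\rangle$ makes $\iota_{(-)}\eta^!$ bijective, so it is a bimodule isomorphism.

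Second I would transport coefficients and keep track of the internal grading. Since the pairing has degree $n$, the map $\iota_{(-)}\eta^!$ raises internal degree by $n$, which is why the target carries the shift $[-n]$; under the total-degree convention of the paper, in which the cochain grading and the internal grading of $A^!$ are combined, this shift is what turns $\mathrm{HH}^\bullet(A^!, A^{\ac}_{\sigma^!})$ into $\mathrm{HH}^{\bullet-n}(A^!, A^{\ac}_{\sigma^!})$. Combining the two observations gives
\[
\mathrm{HH}^\bullet(A^!)=\mathrm{HH}^\bullet(A^!, A^!)\stackrel{\cong}{\longrightarrow}\mathrm{HH}^\bullet\big(A^!, A^{\ac}_{\sigma^!}[-n]\big)=\mathrm{HH}^{\bullet-n}(A^!, A^{\ac}_{\sigma^!}),
\]
which is the asserted isomorphism.

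The main point to be careful about is the degree and sign bookkeeping rather than any deep homological input: unlike Van den Bergh's duality \cite{VdB97}, no homological smoothness of $A^!$ is needed, because the self-injectivity packaged in the Frobenius form supplies the bimodule isomorphism $A^!\cong A^{\ac}_{\sigma^!}[-n]$ directly. The one genuinely delicate step is fixing the Koszul signs in the right action so that the twist appears on the correct side as $\sigma^!$ and not $(\sigma^!)^{-1}$; this is dictated by, and should be checked against, the defining relation for $\sigma^!$ above. The argument is the Frobenius counterpart of the isomorphism in Theorem \ref{thm:BZ}, and it matches \cite[Proposition 3.3]{LZZ}.
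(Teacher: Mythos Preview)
Your proof is correct and is exactly the argument the paper has in mind: it records the bimodule isomorphism $\iota_{(-)}\eta^!\colon A^!\to A^{\ac}_{\sigma^!}[-n]$ immediately before the statement and then defers to \cite{LZZ} for the (tautological) passage to Hochschild cohomology via functoriality in the coefficient bimodule. There is nothing to add.
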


\subsubsection{Deformation quantization}
First, we recall that for graded Poisson algebras over supermanifolds,
Kontsevich's deformation quantization remains valid (see Cattaneo
and Felder \cite[Theorem 4.6]{CF} for a proof).

Now by the same argument as in the polynomial case,
$\nu^!$ can be deformation quantized via the Kontsevich map.
Denote by $\sigma^!$ its deformation quantization; then we have
(see Lemma \ref{lemma:Dolgu})
\begin{equation}\label{eq:quantizationofnu}
(A^{\ac}_{\nu^!})_\hbar\cong(A^{\ac}_{\hbar})_{\sigma^!}.
\end{equation}
This implies the following lemma.

\begin{lemma}\label{lemma:NakaofFrobPoisson}
Let $A^!$ and $\sigma^!$ be as above. Then $\sigma^!$ is the Nakayama automorphism
of $A^!$.
\end{lemma}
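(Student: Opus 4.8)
The plan is to prove this lemma in complete parallel with the polynomial (AS-regular) case of Lemma~\ref{lemma:Dolgu} and Theorem~\ref{thm:DQ+PDforpoly}, the only structural difference being that the Nakayama automorphism of a Frobenius algebra is pinned down through the Frobenius pairing rather than through an $\mathrm{Ext}$-computation. Recall from the discussion preceding Theorem~\ref{thm:LZZ} that the Nakayama automorphism of the deformed Frobenius algebra $A^!_\hbar$ is the unique automorphism $\tau$ for which contraction against the (deformed) volume form
$$\iota_{(-)}\eta^!_\hbar:A^!_\hbar\longrightarrow(A^{\ac}_\hbar)_{\tau}[-n]$$
is an isomorphism of $A^!_\hbar$-bimodules. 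Thus it suffices to show that this bimodule twist $\tau$ coincides with $\sigma^!$, the Kontsevich quantization of the modular vector $\nu^!$.

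First I would record that the classical symmetric Frobenius structure $\eta^!$ on $A^!$ deforms to a nondegenerate $k[\![\hbar]\!]$-bilinear pairing $\eta^!_\hbar$ on $A^!_\hbar$, so that $A^!_\hbar$ is a genuine Frobenius $k[\![\hbar]\!]$-algebra; this is the Frobenius analogue of Dolgushev's theorem and rests on the fact (Cattaneo--Felder \cite{CF}) that Kontsevich's deformation quantization remains valid for graded Poisson algebras over supermanifolds. The heart of the argument is then the identification \eqref{eq:quantizationofnu}, namely $(A^{\ac}_{\nu^!})_\hbar\cong(A^{\ac}_\hbar)_{\sigma^!}$, which was obtained by the same infinitesimal computation as in Lemma~\ref{lemma:Dolgu}. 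On the classical side the pairing $\eta^!$ induces the duality of Theorem~\ref{thm:twistedPD2}, whose coefficients are twisted precisely by $\nu^!$; quantizing this duality through the tangent map of $\mathcal U$ converts the Poisson-theoretic twist by $\nu^!$ into the associative twist by $\sigma^!$. Reading off the bimodule on the right-hand side of the deformed contraction isomorphism then yields $\tau=\sigma^!$, which is the assertion.

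The hard part will be to verify carefully that the deformed pairing $\eta^!_\hbar$ genuinely realizes the bimodule isomorphism \eqref{eq:quantizationofnu} with the twist on the correct side, i.e.\ that the pairing-theoretic Nakayama automorphism agrees, after quantization, with the divergence-theoretic $\nu^!$. Concretely, the leading-order statement to check is that the Frobenius relation $\langle a\star b,c\rangle_\hbar=(-1)^{|c|(|a|+|b|)}\langle\sigma^!(c)\star a,b\rangle_\hbar$ forces $\sigma^!=\mathrm{id}+\hbar\nu^!+\cdots$, which is exactly the same bookkeeping as in the computation \eqref{eq:infdeformationofmodules} used in the proof of Lemma~\ref{lemma:Dolgu}. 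Once this order-$\hbar$ match is established, the higher-order terms are determined by \eqref{eq:quantizationofnu} together with uniqueness of the Nakayama twist, and the lemma follows; indeed, since the preceding paragraph already furnishes \eqref{eq:quantizationofnu}, one may also simply invoke it directly to conclude.
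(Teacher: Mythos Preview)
Your proposal is correct and follows essentially the same route as the paper: reduce the claim to the bimodule isomorphism $A^!_\hbar\cong(A^{\ac}_\hbar)_{\sigma^!}$, obtain the classical Poisson-module isomorphism $A^!\cong A^{\ac}_{\nu^!}$ from the contraction against $\eta^!$ (Lemma~\ref{TwistedPoincare2}/Theorem~\ref{thm:twistedPD2}), quantize it, and then identify the right-hand side via \eqref{eq:quantizationofnu}. The paper's proof is just a terser version of your last sentence---it invokes \eqref{eq:quantizationofnu} directly rather than rechecking the order-$\hbar$ term, and it does not spell out the deformation of the pairing $\eta^!$ to $\eta^!_\hbar$ as you do.
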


\begin{proof}
We have to show that for any $a, b\in A^!_\hbar$,
$$
\langle a, b\rangle =(-1)^{|a||b|}\langle \sigma^!(b), a\rangle.
$$
This is equivalent to showing that
$
A^!_\hbar\cong (A^{\ac}_\hbar)_{\sigma^!}
$
as $A_\hbar$-bimodules.

In fact,
$
A^!\cong A^{\ac}_{\nu^!}
$
as Poisson $A$-modules,
and therefore they have isomorphic deformation quantization. This implies
that
$$
A^!_\hbar\cong (A^{\ac}_{\nu^!})_\hbar
$$
as $A_\hbar$-bimodules. Combining it with \eqref{eq:quantizationofnu},
we get the lemma.
\end{proof}

Similarly to Theorem \ref{thm:DQ+PDforpoly}, we have the following.

\begin{theorem}\label{thm:DQ+PDforFrob}
Suppose $A^!$ is a Frobenius
Poisson algebra.
Then the diagram
\begin{equation}\label{diag:DQ+PDforFrob}
\begin{split}
\xymatrixcolsep{3pc}
\xymatrix{
\mathrm{HP}^\bullet(A^![\![\hbar]\!])\ar[r]^-{\cong}\ar[d]^{\cong}
&\mathrm{HP}^{\bullet-n}(A^![\![\hbar]\!],A^{\ac}_{\nu^!}[\![\hbar]\!])\ar[d]^{\cong}\\
\mathrm{HH}^{\bullet}(A^!_{\hbar})\ar[r]^-{\cong}&
\mathrm{HH}^{\bullet-n}(A^!_{\hbar},(A^{\ac}_\hbar)_{\sigma^!})
}
\end{split}
\end{equation}	
commutes.\end{theorem}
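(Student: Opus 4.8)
The plan is to mirror the proof of Theorem \ref{thm:DQ+PDforpoly}, substituting the Frobenius inputs for the AS-regular ones throughout. The four arrows of the square are produced as follows. For the left vertical isomorphism $\mathrm{HP}^\bullet(A^![\![\hbar]\!])\cong\mathrm{HH}^\bullet(A^!_\hbar)$ I would invoke the tangent map of Kontsevich's $L_\infty$-quasi-isomorphism, which by Cattaneo and Felder \cite{CF} remains valid for the graded Poisson algebra $A^!$; this is the exterior-algebra analogue of \eqref{eq:isoofPoissonandHochschild}. For the right vertical isomorphism I would apply Chemla's Theorem \ref{thm:Chemla} to the Poisson module $A^{\ac}_{\nu^!}$, obtaining $\mathrm{HP}^{\bullet-n}(A^![\![\hbar]\!], A^{\ac}_{\nu^!}[\![\hbar]\!])\cong\mathrm{HH}^{\bullet-n}(A^!_\hbar, (A^{\ac}_{\nu^!})_\hbar)$, and then rewrite $(A^{\ac}_{\nu^!})_\hbar\cong(A^{\ac}_\hbar)_{\sigma^!}$ using \eqref{eq:quantizationofnu}.

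For the horizontal arrows, the top one is the twisted Poincar\'e duality of Theorem \ref{thm:twistedPD2}, now taken over the ground ring $\mathbb{R}[\![\hbar]\!]$. For the bottom one, Lemma \ref{lemma:NakaofFrobPoisson} guarantees that $A^!_\hbar$ is a Frobenius algebra whose Nakayama automorphism is precisely $\sigma^!$, so the Lambre--Zhou--Zimmermann duality of Theorem \ref{thm:LZZ} supplies the isomorphism $\mathrm{HH}^\bullet(A^!_\hbar)\cong\mathrm{HH}^{\bullet-n}(A^!_\hbar,(A^{\ac}_\hbar)_{\sigma^!})$. With all four arrows in hand, the isomorphisms asserted in \eqref{diag:DQ+PDforFrob} follow.

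It remains to check that the square commutes, and as in the polynomial case I would verify this on the chain level. Both horizontal maps are contractions, respectively cap products, against the (quantum) fundamental class, while the two vertical maps are the Hochschild--Kostant--Rosenberg quasi-isomorphisms coming from the tangent map $\mathcal{U}$. The claim is that $\mathcal{U}$ intertwines the Poisson contraction $\iota_{(-)}\eta^!$ of \eqref{eq:defiota} with the cap product by the Frobenius form on $A^!_\hbar$, so that the chain-level diagram commutes strictly; passing to (co)homology then yields the theorem.

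The hard part will be exactly this chain-level compatibility: one must confirm that the volume form $\eta^!$ of $A^!$ deformation-quantizes to the Frobenius fundamental class of $A^!_\hbar$, so that contraction with $\eta^!$ and cap product with the Frobenius class are matched by $\mathcal{U}$. This is the graded analogue of the verification compressed into the final sentence of the proof of Theorem \ref{thm:DQ+PDforpoly}, and I expect it to follow from Lemma \ref{lemma:NakaofFrobPoisson} together with the fact that the tangent map respects the module and duality structures to leading order in $\hbar$.
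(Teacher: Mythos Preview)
Your proposal is correct and follows essentially the same route as the paper: the four arrows are identified via Kontsevich's tangent map (Cattaneo--Felder in the graded setting), Theorem~\ref{thm:twistedPD2}, Lemma~\ref{lemma:NakaofFrobPoisson} together with the Lambre--Zhou--Zimmermann duality, and Chemla's Theorem~\ref{thm:Chemla} combined with \eqref{eq:quantizationofnu}; commutativity is then reduced to the Hochschild--Kostant--Rosenberg comparison. Your final paragraph simply unpacks what the paper compresses into a single sentence.
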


\begin{proof}
Observe that
the left vertical isomorphism is Kontsevich's
isomorphism \eqref{eq:isoofPoissonandHochschild},
the top horizontal isomorphism is
given by \eqref{eq:twistedPD2},
and the bottom horizontal isomorphism is
the right vertical isomorphism of \eqref{eq:HHtoHHtwisted}
with the Nakayama automorphism given by Lemma \ref{lemma:NakaofFrobPoisson}.

We now need to
prove
\begin{equation}\label{eq:HPtoHHtwisted}
\mathrm{HP}^{\bullet-n}(A^![\![\hbar]\!],A^{\ac}_{\nu^!}[\![\hbar]\!])
\cong
\mathrm{HH}^{\bullet-n}(A^!_{\hbar},(A^{\ac}_\hbar)_{\sigma^!}).
\end{equation}
In fact this follows from combining \eqref{eq:quantizationofnu}
and Theorem \ref{thm:Chemla}.
\end{proof}

\subsubsection{Diagonalizable Nakayama automorphism}
Analogously to Kowalzig and Kr\"ahmer \cite{KK},
for a Frobenius algebra with diagonalizable Nakayama automorphism,
Lambre, Zhou and Zimmermann proved in \cite[Theorem 4.1]{LZZ} that
its Hochschild cohomology also admits a Batalin-Vilkovisky
algebra structure, whose Batalin-Vilkovisky operator generates
the Gerstenhaber bracket on the cohomology.
Parallel to Theorem \ref{thm:DQ+BVforPoisson}, we have the following.

\begin{theorem}\label{thm:DQ+BVforFrobPoisson}
Suppose $A^!$ is a Frobenius
Poisson algebra. Let $A^!_\hbar$ be its deformation
quantization.
If $A^!$ has diagonalizable modular symmetry,
then we have an isomorphism
\begin{equation}
\mathrm{HP}^\bullet(A^![\![\hbar]\!])\cong\mathrm{HH}^\bullet(A^!_\hbar)
\end{equation}
of Batalin-Vilkovisky algebras.
\end{theorem}

\section{Poincar\'e duality, Koszul duality and deformation quantization}\label{sect:PDKDDQ}

In this section we study the deformation quantization
of quadratic Poisson algebras, which relates
the theorems obtained in previous sections.

\subsection{Koszul duality of AS-regular algebras}
We start with the Koszul duality theory for associative algebras.

Let $V$ be a finite-dimensional (possibly graded) vector space over $k$.
Denote by $TV$ the free algebra generated by $V$ over $k$;
that is, $TV$ is the tensor algebra generated by $V$.
Suppose $R$ is a subspace of $V\otimes V$, and let
$(R)$ be the two-sided ideal generated by $R$ in $TV$,
then the quotient algebra
$A:= TV/(R)$
is called
a {\it quadratic algebra}.
Let $A^!$ be the quadratic dual algebra of $A$; that is,
$A^!=TV^*/(R^{\perp})$, where
$R^{\perp}=\{r^*\in V^*\otimes V^*|r^*(R)=0\}$.
Let $A^{\ac}$ be the linear dual of $A^!$, called the quadratic dual coalgebra of $A$.
Choose a set of basis $\{e_i\}$ for $V$, and let $\{e_i^*\}$ be their duals in $V^*$.
There is a natural chain complex associated to $A$, called the {\it Koszul complex}:
\begin{equation}\label{Koszul_complex}
\xymatrix{
\cdots\ar[r]^-{\delta}&
A\otimes A^{\ac}_{i+1}\ar[r]^-{\delta}&
A\otimes A^{\ac}_{i}\ar[r]^-{\delta}&
\cdots\ar[r]&
A\otimes A^{\ac}_0\ar[r]^-{\delta}& k,
}
\end{equation}
where for any $r\otimes f\in A\otimes A^{\ac}$,
$\delta(r\otimes f)=\displaystyle\sum_i e_ir\otimes e_i^*f$.

\begin{definition}[Koszul algebra]
A quadratic algebra $A=TV/(R)$ is called Koszul
if the Koszul chain complex \eqref{Koszul_complex} is acyclic.
\end{definition}

A typical example of Koszul algebras
what we use throughout the paper is
the polynomial algebra $A=k[x_1,\cdots, x_n]$, whose Koszul dual is the exterior
algebra $A^!=\Lambda(\xi_1,\cdots,\xi_n)$.

We have the following Koszul duality for AS-regular and Frobenius algebras;
see Smith \cite[Proposition 5.10]{Smith} and Van den Bergh \cite[pp. 667]{VdB96}
for a proof.

\begin{proposition}\label{prop:ASregularFrob}
Let $A$ be a Koszul algebra, and let $A^!$ be its
Koszul dual algebra. Then $A$ is an AS-regular algebra
if and only if $A^!$ is Frobenius. Under this correspondence,
the Nakayama automorphism of $A$ is Koszul dual to
the Nakayama automorphism of $A^!$.
\end{proposition}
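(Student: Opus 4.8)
The plan is to use the Koszul resolution as a bridge and reduce both the AS\nobreakdash-regularity of $A$ and the Frobenius property of $A^!$ to a single statement about the multiplication pairing on $A^!$. Since $A$ is Koszul, the Koszul complex \eqref{Koszul_complex} is the minimal free resolution of the trivial left module $k$, whose $i$-th term is $A\otimes A^{\ac}_i = A\otimes (A^!_i)^*$. First I would read off the global dimension: this resolution has length $n$, so $A$ has finite global dimension $n$ precisely when $A^!$ is finite-dimensional and concentrated in degrees $0,\dots,n$ with $A^!_n\neq 0$. This disposes of the first condition in the definition of an AS-regular algebra and already shows that the homological dimension of $A$ matches the Frobenius degree of $A^!$.

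Next I would compute $\mathrm{Ext}^\bullet_A(k,A)$ by applying $\mathrm{Hom}_A(-,A)$ to this resolution. Since $\mathrm{Hom}_A(A\otimes A^{\ac}_i, A)\cong A\otimes (A^{\ac}_i)^* = A\otimes A^!_i$, the resulting cochain complex again has free terms, with differentials transpose to the Koszul differentials. The Gorenstein condition, namely $\mathrm{Ext}^i_A(k,A)=0$ for $i\neq n$ and $\mathrm{Ext}^n_A(k,A)\cong k$, is then equivalent to saying that this dualized complex is, up to the homological reversal $i\mapsto n-i$, isomorphic to the original Koszul resolution. Such an isomorphism exists exactly when $A^{\ac}_i\cong A^!_{n-i}$ compatibly with the differentials, which in turn holds if and only if the multiplication maps $A^!_i\otimes A^!_{n-i}\to A^!_n$ are perfect pairings and $\dim_k A^!_n=1$. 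But this is precisely the assertion that the pairing $\langle a,b\rangle:=(\text{component of }ab\text{ in }A^!_n)$ is non-degenerate, i.e.\ that $A^!$ is a Frobenius algebra of dimension $n$. Running the equivalences in both directions yields the claimed ``if and only if.''

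For the Nakayama automorphisms I would compare the two automorphisms on generators. On one side, the Frobenius pairing on $A^!$ determines $\sigma^!$ via $\langle ab,c\rangle=\pm\langle\sigma^!(c)a,b\rangle$; on the other, the AS-regular structure on $A$ determines $\sigma$ via the bimodule Koszul resolution and the identification $\mathrm{Ext}^n_{A^e}(A,A^e)\cong A_\sigma$. The key observation is that the \emph{same} Koszul (bi)complex computes both sides, and its self-duality intertwines the left and right module structures; under the generator-level duality $V\leftrightarrow V^*$ this self-duality carries $\sigma$ to $\sigma^!$ up to the transpose/inverse dictated by the conventions. Concretely, I would restrict both automorphisms to the degree-one pieces $V$ and $(A^!_1)^*=V^*$ and check agreement there; since both are graded algebra automorphisms of quadratic algebras, agreement on generators forces agreement everywhere.

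The main obstacle I expect lies in the middle step: identifying the differentials of the dualized Koszul complex and verifying rigorously that the Gorenstein vanishing is equivalent to non-degeneracy of the multiplication pairing, keeping track of the internal-degree shift and the signs coming from the grading $|\xi_i|=-1$. Matching the two Nakayama automorphisms is conceptually just the self-duality of the Koszul complex, but reconciling the transpose-versus-inverse conventions between the AS-regular and Frobenius sides, so that $\sigma$ and $\sigma^!$ correspond rather than differing by an inverse, is where the bookkeeping must be done most carefully.
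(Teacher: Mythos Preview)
Your outline is correct and is essentially the standard argument for this result. Note, however, that the paper does not give its own proof of this proposition: it simply refers the reader to Smith \cite[Proposition 5.10]{Smith}. So there is no ``paper's proof'' to compare against in detail; what you have written is in fact (a sketch of) the argument one finds in that reference. The reduction of the Gorenstein condition to the self-duality of the Koszul complex, and thence to the non-degeneracy of the multiplication pairing $A^!_i\otimes A^!_{n-i}\to A^!_n$, is exactly the mechanism Smith uses, and your plan for the Nakayama correspondence via the bimodule Koszul resolution is the right one. Your own caveat about the transpose-versus-inverse convention is well placed: in the literature the correspondence is sometimes stated as $\sigma^!$ being dual to $\sigma$ and sometimes to $\sigma^{-1}$ (up to a grading twist), so when you carry this out you should fix conventions once and track them through the self-duality isomorphism rather than appeal to it abstractly.
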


Combining Theorems \ref{thm:BZ} and \ref{thm:LZZ}
and Proposition \ref{prop:ASregularFrob},
the second author was able to prove the following.

\begin{theorem} [\cite{Liu} Lemma 5.8]\label{thm:Koszul+PDforassoc}
Let $A$ be a Koszul AS-regular algebra.
Let $A^!$ and $A^{\ac}$ be its Koszul dual algebra and coalgebra respectively.
Then the Nakayama automorphism $\sigma$ of $A$ is mapped to the Nakayama automorphism
$\sigma^!$ of $A^!$ under Koszul duality, and the following diagram
\begin{equation}\label{eq:HHtoHHtwisted}
\begin{split}
\xymatrix{
\xymatrixcolsep{3pc}
\mathrm{HH}^\bullet(A)\ar[d]^-{\cong}\ar[r]^-{\cong}&\mathrm{HH}_{n-\bullet}(A,A_\sigma)\ar[d]^{\cong}\\
\mathrm{HH}^\bullet(A^!)\ar[r]^-{\cong}&\mathrm{HH}^{\bullet-n}(A^!,A^{\ac}_{\sigma^!})
}
\end{split}
\end{equation}
commutes.
Moreover, if the Nakayama on $A$ and hence on $A^!$ is
diagonalizable, then
$$
\mathrm{HH}^\bullet(A)\cong\mathrm{HH}^\bullet(A^!)
$$
as Batalin-Vilkovisky algebras, whose
Batalin-Vilkovisky operators generate the
Gerstenhaber brackets on both sides.
\end{theorem}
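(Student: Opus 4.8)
The first assertion—that $\sigma$ is carried to $\sigma^!$ under Koszul duality—is precisely the content of Proposition \ref{prop:ASregularFrob}, so it remains only to verify that the square commutes, and the plan is to do this at the level of the Koszul complexes rather than on (co)homology.

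First I would fix the two-sided Koszul resolutions. Since $A$ is Koszul AS-regular, the complex $K_\bullet(A)=A\otimes A^{\ac}_\bullet\otimes A$ is a finite projective resolution of $A$ as an $A^e$-module, and dually $K_\bullet(A^!)=A^!\otimes(A^!)^{\ac}_\bullet\otimes A^!$ resolves $A^!$. Here $A^{\ac}=(A^!)^*$ and $(A^!)^{\ac}=A^*$ are the graded duals of $A^!$ and of $A=(A^!)^!$ respectively, so the two resolutions are assembled from mutually dual combinatorial data. Applying $\mathrm{Hom}_{A^e}(-,A)$ and $A_\sigma\otimes_{A^e}(-)$ to $K_\bullet(A)$ yields small complexes computing $\mathrm{HH}^\bullet(A)$ and $\mathrm{HH}_\bullet(A,A_\sigma)$, and analogously for $A^!$. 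The upshot is that each vertex of the square acquires an explicit model built from the graded pieces of $A$ and $A^{\ac}$ together with the Koszul differential.

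Next I would read off the two vertical arrows on these models. The left-hand Koszul duality isomorphism $\mathrm{HH}^\bullet(A)\cong\mathrm{HH}^\bullet(A^!)$ is induced by the evident identification of $\mathrm{Hom}_{A^e}(K_\bullet(A),A)$ with $\mathrm{Hom}_{(A^!)^e}(K_\bullet(A^!),A^!)$ coming from the perfect pairing $A^{\ac}\otimes A^!\to k$; the right-hand map $\mathrm{HH}_{n-\bullet}(A,A_\sigma)\cong\mathrm{HH}^{\bullet-n}(A^!,A^{\ac}_{\sigma^!})$ arises from the same pairing together with the homology--cohomology interchange intrinsic to Koszul duality, exactly as in the second isomorphism of Proposition \ref{prop:KoszulforPoisson}. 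Under these identifications both vertical maps become tautological, so the real content of the theorem is that the two horizontal Poincar\'e dualities agree.

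Finally I would compare the horizontal arrows as cap products. By Theorems \ref{thm:BZ} and \ref{thm:LZZ} each is cap product (respectively contraction) against a fundamental class $\omega_A\in\mathrm{HH}_n(A,A_\sigma)$ and $\omega_{A^!}\in\mathrm{HH}^{-n}(A^!,A^{\ac}_{\sigma^!})$. In the Koszul model both classes are represented by the distinguished generator of the one-dimensional top piece $A^{\ac}_n\cong(A^!_n)^*\cong k$, so the duality identification of the previous step sends $\omega_A$ to $\omega_{A^!}$ up to a nonzero scalar, while the compatibility of the twists $A_\sigma$ and $A^{\ac}_{\sigma^!}$ is supplied by the correspondence $\sigma\leftrightarrow\sigma^!$ of Proposition \ref{prop:ASregularFrob}. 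Commutativity of the square then follows from naturality of the cap product. The step I expect to be the main obstacle is the bookkeeping in this last paragraph: one must track the Koszul signs and, above all, verify that the $\sigma$-twist on the homology side of $A$ is carried precisely to the $\sigma^!$-twist on the cohomology side of $A^!$ on the nose, which is where Proposition \ref{prop:ASregularFrob} does the essential work but must be translated from a statement about automorphisms into an isomorphism of twisted coefficient complexes.
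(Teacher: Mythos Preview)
The paper does not actually prove this theorem: it is stated with the attribution ``\cite{Liu} Lemma 5.8'' and no proof environment follows it, the surrounding text moving directly to ``We next relate the above result with Kontsevich's deformation quantization.'' So there is nothing in the present paper to compare your argument against; the result is imported wholesale from \cite{Liu}.

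That said, your outline is a reasonable sketch of how the argument in \cite{Liu} in fact proceeds: one works with the two-sided Koszul resolutions on both sides, identifies the small complexes computing the four corners, and then checks that the Van den Bergh and Frobenius Poincar\'e dualities are both cap products with fundamental classes that correspond under the Koszul identification. Your own diagnosis of the delicate point is accurate: the substance lies in matching the $\sigma$-twist on $A$ with the $\sigma^!$-twist on $A^{\ac}$ at the level of coefficient bimodules (not merely at the level of automorphisms), and in keeping track of the Koszul signs when translating between homology of $A$ and cohomology of $A^!$. If you want to turn this into a self-contained proof you will need to write those identifications out explicitly, but the strategy is the right one.
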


\subsection{Koszul duality and deformation quantization}\label{sect:DQandKD}

One of the motivations of the current paper is the result of Shoikhet et al.
on the Koszul duality between the deformation quantizations
of quadratic Poisson polynomial algebras and their Koszul dual,
which is stated as follows
(see Shoikhet \cite[Theorem 0.3]{Shoikhet1}
and Calaque et al. \cite[ Theorem 8.6]{Calaque+} for a proof):
{\it Let $A=\mathbb R[x_1,\cdots,x_n]$ and $A^!$ its Koszul dual.
Then Kontsevich's deformation quantization of $A$ and $A^!$, denoted by $A_\hbar$
and $A^{!}_\hbar$ respectively, are also Koszul dual
to each other
as associative algebras over $\mathbb R[\![\hbar]\!]$.
}

Notice that
by Shoikhet \cite{Shoikhet}, the Koszul duality theory remain valid if $\mathbb R$ is replaced by
$\mathbb R[\![\hbar]\!]$, and therefore, the Koszul duality between $A_\hbar$
and $A^!_\hbar$ over $\mathbb R[\![\hbar]\!]$ in the above
theorem makes sense.
The following theorem is obtained in \cite[Theorem 1.5]{CCEY}: {\it
Let $A[\![\hbar]\!]$ and $A^![\![\hbar]\!]$
be Koszul dual Poisson algebras.
Then
we have the following commutative diagram of isomorphisms
\begin{equation}\label{diag:Koszul+DQ+Hochschildcohomology}
\begin{split}
\xymatrixcolsep{3pc}
\xymatrix{
\mathrm{HP}^\bullet(A[\![\hbar]\!])\ar[r]^-{\cong}\ar[d]^-{\cong}&\mathrm{HP}^\bullet(A^![\![\hbar]\!])\ar[d]^-{\cong}\\
\mathrm{HH}^\bullet(A_\hbar)\ar[r]^-{\cong}&\mathrm{HH}^\bullet(A^!_\hbar).
}
\end{split}
\end{equation}
}

For the twisted Poisson homology and the twisted Hochschild homology, we have the following.

\begin{theorem}\label{thm:Koszul+DQhomology}
Let $A[\![\hbar]\!]$ and $A^{\ac}[\![\hbar]\!]$
be Koszul dual Poisson algebras.
Then we have the following commutative diagram of isomorphisms
\begin{equation}\label{diag:Koszul+DQ+Hochschildhomology}
\begin{split}
\xymatrixcolsep{3pc}
\xymatrix{
\mathrm{HP}_\bullet(A[\![\hbar]\!],A_\nu[\![\hbar]\!])\ar[r]^-{\cong}\ar[d]^-{\cong}
&\mathrm{HP}^{-\bullet}(A^![\![\hbar]\!],A^{\ac}_{\nu^!}[\![\hbar]\!])\ar[d]^-{\cong}\\
\mathrm{HH}_\bullet(A_\hbar,(A_{\nu})_{\hbar})\ar[r]^-{\cong}
&\mathrm{HH}^{-\bullet}(A^!_\hbar,(A^{\ac}_{\nu^!})_{\hbar}).
}
\end{split}
\end{equation}
\end{theorem}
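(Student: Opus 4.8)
The plan is to exhibit the square \eqref{diag:Koszul+DQ+Hochschildhomology} as the back face of a commutative cube whose other five faces have already been proved, and then to close the cube by a formal diagram-chase. For the front face I take the cohomology square \eqref{diag:Koszul+DQ+Hochschildcohomology} of Theorem \ref{thm:Koszul+DQCohomology}, with vertices $\mathrm{HP}^\bullet(A[\![\hbar]\!])$, $\mathrm{HP}^\bullet(A^![\![\hbar]\!])$, $\mathrm{HH}^\bullet(A_\hbar)$ and $\mathrm{HH}^\bullet(A^!_\hbar)$. The four edges joining the front face to the back face are exactly the four Poincar\'e dualities: the twisted duality of $A[\![\hbar]\!]$ (Theorem \ref{thm:twistedPD1}), that of $A^![\![\hbar]\!]$ (Theorem \ref{thm:twistedPD2}), the Brown--Zhang duality of the AS-regular algebra $A_\hbar$ (Theorem \ref{thm:BZ}), and the Lambre--Zhou--Zimmermann duality of the Frobenius algebra $A^!_\hbar$ (Theorem \ref{thm:LZZ}). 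Here I use that $A_\hbar$ is Koszul AS-regular over $\mathbb R[\![\hbar]\!]$ with Frobenius Koszul dual $A^!_\hbar$, together with the identifications $(A_\nu)_\hbar=(A_\hbar)_\sigma$ and $(A^{\ac}_{\nu^!})_\hbar=(A^{\ac}_\hbar)_{\sigma^!}$ supplied by Lemma \ref{lemma:Dolgu} and \eqref{eq:quantizationofnu}.

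With this labelling the four remaining faces are precisely the commutative squares already at hand: the top face is the Koszul-duality square \eqref{diag:Koszul+PDforPoisson} of Theorem \ref{thm:Koszul+PDforPoisson}, now over $\mathbb R[\![\hbar]\!]$ for the Kontsevich-deformed Poisson brackets $\pi_\hbar$ and $\pi^!_\hbar$; the bottom face is the associative Koszul-duality square \eqref{eq:HHtoHHtwisted} of Theorem \ref{thm:Koszul+PDforassoc} applied to $A_\hbar$; the left face is the deformation-quantization square \eqref{diag:DQ+PDforpoly} of Theorem \ref{thm:DQ+PDforpoly}; and the right face is the square \eqref{diag:DQ+PDforFrob} of Theorem \ref{thm:DQ+PDforFrob}. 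All twelve edges are isomorphisms. Since the four side faces commute, each arrow of the back face equals the corresponding front-face arrow conjugated by the connecting dualities; the two composites around the back face therefore reduce, after the connecting isomorphisms cancel, to the two composites around the commuting front face. Hence \eqref{diag:Koszul+DQ+Hochschildhomology} commutes, and each of its arrows is an isomorphism, being a conjugate of an isomorphism.

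The diagram-chase is purely formal, so the only real work is checking that edges shared by adjacent faces are literally the same morphisms, so that the five faces genuinely assemble into one cube. Concretely, the deformation-quantization arrows of the left and right faces must coincide with the vertical arrows of the back face --- in each case the Kontsevich--Chemla tangent and HKR maps of \eqref{eq:isoofPoissonandHochschild}, \eqref{eq:isooftwistedhomology} and \eqref{eq:HPtoHHtwisted} --- the Poincar\'e-duality arrows must be the same cap-product isomorphisms used to join front to back, and the Koszul arrows on the Poisson and Hochschild sides must be induced by the same chain-level identifications of Proposition \ref{prop:KoszulforPoisson} and Theorem \ref{thm:Koszul+PDforassoc}. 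The one genuinely non-formal input is the compatibility of Kontsevich's quantization with Koszul duality, namely that $\pi^!_\hbar$ is Koszul dual to $\pi_\hbar$ and $A^!_\hbar$ to $A_\hbar$; this is the Shoikhet--Calaque theorem, and it is exactly what permits reading Theorems \ref{thm:Koszul+PDforPoisson} and \ref{thm:Koszul+PDforassoc} over $\mathbb R[\![\hbar]\!]$. Granting these identifications the cube closes and the theorem follows.
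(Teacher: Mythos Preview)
Your argument is correct. The five-faces-imply-the-sixth diagram chase is valid here because all twelve edges are isomorphisms (in fact you only need one of the front-to-back edges to be epi), and you have correctly verified that the shared edges match up: the Poincar\'e dualities connecting front to back, the Chemla/Kontsevich maps on the vertical edges, and the Koszul identifications on the horizontal ones.

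However, your route is not the one the paper takes. The paper proves \eqref{diag:Koszul+DQ+Hochschildhomology} \emph{directly}: it names each of the four arrows as an already-established isomorphism (the right column of \eqref{diag:Koszul+PDforPoisson}, the right column of \eqref{eq:HHtoHHtwisted}, and the Chemla maps \eqref{eq:isooftwistedhomology} and \eqref{eq:HPtoHHtwisted}) and then asserts commutativity via the Hochschild--Kostant--Rosenberg theorem. Only afterwards, in Theorem \ref{mainthm2}, does the paper assemble the full cube \eqref{maindiagram}, using the square you are proving as one of its six faces. You have inverted this logic: you build the cube first from the other five faces and deduce the sixth. Your approach trades a direct (but somewhat terse) HKR verification for a purely formal argument, at the cost of front-loading the Shoikhet--Calaque compatibility and all four Poincar\'e-duality squares; it also makes Theorem \ref{mainthm2} an immediate corollary rather than a separate assembly step.
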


\begin{proof}
The top horizontal isomorphism is the right vertical isomorphism of
\eqref{diag:Koszul+PDforPoisson}; the bottom horizontal isomorphism
is the right vertical isomorphism of
\eqref{eq:HHtoHHtwisted};
the left vertical isomorphism is
\eqref{eq:isooftwistedhomology}; and the right vertical isomorphism
is \eqref{eq:HPtoHHtwisted}. The commutativity
follows from the Hochschild-Kostant-Rosenberg theorem.
\end{proof}

The following theorem summarizes the above several results (the {\it unimodular} case
has previously been studied in \cite{CCEY}).

\begin{theorem}\label{mainthm2}
Let $A[\![\hbar]\!]$ and $A^![\![\hbar]\!]$
be Koszul dual Poisson algebras.
Then the following diagram of isomorphisms
\begin{equation}\label{maindiagram}
\begin{split}
\xymatrixrowsep{1pc}
\xymatrixcolsep{1pc}
\xymatrix{
&\mathrm{HP}^\bullet(A^![\![\hbar]\!])\ar[rr]^-{\cong} \ar'[d][dd]^-{\cong}
&&\mathrm{HP}^{\bullet-n}(A^![\![\hbar]\!], A^{\ac}_{\nu^!}[\![\hbar]\!])\ar[dd]^-{\cong}\\
\mathrm{HP}^\bullet(A[\![\hbar]\!])\ar[rr]^-{\cong} \ar[dd]^-{\cong} \ar[ur]^-{\cong}
&&\mathrm{HP}_{n-\bullet}(A[\![\hbar]\!], A_\nu[\![\hbar]\!])\ar[dd]^-{\cong}\ar[ur]^-{\cong} &\\
&\mathrm{HH}^\bullet(A^!_{\hbar})\ar'[r][rr]^-{\cong}&&\mathrm{HH}^{\bullet-n}(A^!_{\hbar}, A^{\ac}_{\hbar,\sigma^!})\\
\mathrm{HH}^\bullet(A_{\hbar})\ar[rr]^-{\cong}\ar[ur]^-{\cong} &&\mathrm{HH}_{n-\bullet}(A_{\hbar}, A_{\hbar,\sigma})\ar[ur]^-{\cong}&
}
\end{split}
\end{equation}
commutes,
where the horizontal arrows are the Poincar\'e duality, the vertical arrows are given by deformation quantization,
and the slanted arrows are given by Koszul duality.
\end{theorem}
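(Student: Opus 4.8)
The plan is to read the cube \eqref{maindiagram} as the gluing of its six two-dimensional faces, each of which has already been established as a commuting square of isomorphisms in a previous result. Concretely, the front face is \eqref{diag:DQ+PDforpoly} (Theorem \ref{thm:DQ+PDforpoly}), the back face is \eqref{diag:DQ+PDforFrob} (Theorem \ref{thm:DQ+PDforFrob}), the top face is \eqref{diag:Koszul+PDforPoisson} (Theorem \ref{thm:Koszul+PDforPoisson}), the bottom face is \eqref{eq:HHtoHHtwisted} (Theorem \ref{thm:Koszul+PDforassoc}), the left face is \eqref{diag:Koszul+DQ+Hochschildcohomology} (Theorem \ref{thm:Koszul+DQCohomology}), and the right face is \eqref{diag:Koszul+DQ+Hochschildhomology} (Theorem \ref{thm:Koszul+DQhomology}). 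Every edge of the cube is one of the three families of isomorphisms already constructed, so there is nothing new to build; the task is only to show that these six squares are mutually compatible and that the full cube therefore commutes.

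First I would check that the faces genuinely glue, i.e. that each edge shared by two adjacent faces carries the \emph{same} map in both occurrences. For instance the deformation-quantization edge $\mathrm{HP}^\bullet(A[\![\hbar]\!])\to\mathrm{HH}^\bullet(A_\hbar)$ lies on both the front and the left face, and in each it is the tangent-map quasi-isomorphism \eqref{eq:isoofPoissonandHochschild}; similarly each twisted-homology edge, each Poincar\'e-duality edge and each Koszul-duality edge is literally the same isomorphism wherever it appears, carrying the same twisting by $\nu$, $\nu^!$, $\sigma$, $\sigma^!$ and the same grading shift by $n$. Here Proposition \ref{prop:isoofmodular} identifies $\nu$ with $\nu^!$ under the correspondence \eqref{eq:correspond}, while Lemma \ref{lemma:Dolgu} and Lemma \ref{lemma:DQofmodularsym} give $\sigma=\exp(\hbar\nu)$ and $\sigma^!=\exp(\hbar\nu^!)$, so the twistings on adjacent faces match. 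Granting these identifications, the six faces assemble into a bona fide commutative cube.

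It then remains to run the purely formal diagram chase. Writing $P$, $D$, $K$ for the Poincar\'e-duality, deformation-quantization and Koszul-duality directions, there are exactly six monotone paths from the source $\mathrm{HP}^\bullet(A[\![\hbar]\!])$ to the sink $\mathrm{HH}^{\bullet-n}(A^!_\hbar, A^{\ac}_{\hbar,\sigma^!})$, indexed by the orderings of $\{P,D,K\}$. Using the front face one rewrites an initial ``$P$ then $D$'' as ``$D$ then $P$'', using the top face one exchanges ``$P$ then $K$'', and using the left and bottom faces one exchanges the remaining adjacent pairs; applying the commutativity of the faces in turn transforms any one path into any other, so all six composites agree. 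Since each face is moreover an isomorphism of the Batalin--Vilkovisky (resp. gravity) structures on the relevant (co)homology, the resulting identifications respect those structures as well.

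The only genuinely non-routine step is the edge-matching of the second paragraph: one must track the differing grading conventions on the two sides of the cube---$\bullet\mapsto n-\bullet$ in homology on the polynomial side against $\bullet\mapsto\bullet-n$ in cohomology on the Frobenius/exterior side---so that the duality isomorphism of the top face is compatible, under Koszul duality, with that of the bottom face, and so that the twistings $\sigma=\exp(\hbar\nu)$ and $\sigma^!=\exp(\hbar\nu^!)$ correspond. Once these conventions are pinned down, commutativity of the cube follows automatically from that of its faces, with no further computation required.
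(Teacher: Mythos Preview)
Your proposal is correct and follows essentially the same approach as the paper: both proofs identify the six faces of the cube with precisely the same six earlier results (Theorems \ref{thm:DQ+PDforpoly}, \ref{thm:DQ+PDforFrob}, \ref{thm:Koszul+PDforPoisson}, \ref{thm:Koszul+PDforassoc}, \ref{thm:Koszul+DQCohomology}, \ref{thm:Koszul+DQhomology}) and conclude by assembling them. Your version is more explicit about the edge-matching and the diagram chase, which the paper leaves implicit; one small remark is that the reference to Lemma \ref{lemma:DQofmodularsym} (semisimplicity) and the closing sentence about Batalin--Vilkovisky/gravity compatibility are not needed here, since Theorem \ref{mainthm2} asserts only commutativity of the cube, with the BV/gravity statements deferred to Theorems \ref{thm:BViso} and \ref{thm:gravityiso}.
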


\begin{proof}
The top square of the diagram is given by \eqref{diag:Koszul+PDforPoisson},
the bottom square is given by \eqref{eq:HHtoHHtwisted},
the front square is given by \eqref{diag:DQ+PDforpoly},
the back square is given by \eqref{diag:DQ+PDforFrob},
and
the left and the right squares are given by \eqref{diag:Koszul+DQ+Hochschildcohomology}
and \eqref{diag:Koszul+DQ+Hochschildhomology} respectively.
\end{proof}

\subsection{Isomorphisms of Batalin-Vilkovisky algebras}

We continue to show that, for a quadratic
Poisson algebra with diagonalizable modular symmetry,
the left side
diagram in \eqref{maindiagram}
is a commutative diagram of isomorphisms of Batalin-Vilkovisky
algebras (see Theorem \ref{thm:BViso}). It induces a commutative diagram of isomorphisms
of gravity algebras on the corresponding negative cyclic homology (see Theorem \ref{thm:gravityiso}).

\begin{lemma}\label{lemma:Koszul+BVPoisson}
Let $A$ be a quadratic Poisson algebra.
Let $A^!$ be its Koszul dual algebra.
If the modular vector field $\sigma$ is diagonalizable, then
so is its Koszul dual $\sigma^!$.
In this case, we have
$$
\mathrm{HP}^\bullet(A)\cong\mathrm{HP}^\bullet(A^!)
$$
as Batalin-Vilkovisky algebras.
\end{lemma}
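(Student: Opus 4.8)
The plan is to assemble the statement from the structural results already in place, the only genuinely new input being that the chain-level Koszul duality isomorphism respects the second (mixed-complex) differential. I first dispose of the semisimplicity claim. By Proposition \ref{prop:isoofmodular} the correspondence \eqref{eq:correspond} identifies $\nu$ with $\nu^!$; concretely, $\nu=\sum_{k,l}B_{kl}\,x_k\,\partial/\partial x_l$ is a linear vector field, and $\nu^!$ is the linear vector field on $A^!$ whose matrix on the generators $\xi_1,\dots,\xi_n$ is, up to transposition and sign, the matrix $B$ of $\nu$ on $x_1,\dots,x_n$. A derivation of the polynomial algebra $A$ (resp. of the exterior algebra $A^!$) is diagonalizable if and only if its restriction to the span of the generators is, and a matrix is diagonalizable precisely when its transpose is; hence $\nu$ is semisimple if and only if $\nu^!$ is, and the eigenspace decompositions correspond under \eqref{eq:correspond}.

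For the Batalin--Vilkovisky isomorphism I would work with the commutative square \eqref{diag:Koszul+PDforPoisson} of Theorem \ref{thm:Koszul+PDforPoisson}. Write $\phi$ and $\phi^!$ for the two horizontal twisted Poincar\'e dualities, $K\colon\mathrm{HP}^\bullet(A)\to\mathrm{HP}^\bullet(A^!)$ for the left vertical Koszul duality map and $K'$ for the right one, so that $\phi^!\circ K=K'\circ\phi$. By Theorems \ref{thm:BVforPoisson} and \ref{thm:BVforFrobPoisson} together with Lambre's Theorem \ref{calwithdualityinducesBV}, the two Batalin--Vilkovisky operators are $\Delta=\phi^{-1}\circ d\circ\phi$ and $\Delta^!=(\phi^!)^{-1}\circ d^*\circ\phi^!$, where $d$ and $d^*$ are the de Rham and dual de Rham differentials. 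A short diagram chase using $\phi^!\circ K=K'\circ\phi$ shows that the intertwining $K\circ\Delta=\Delta^!\circ K$ is equivalent to the single identity $K'\circ d=d^*\circ K'$ on the homology side. Moreover $K$ is induced by \eqref{eq:correspond}, which sends generators of the polyvector algebra to generators and is therefore an isomorphism of graded commutative algebras for the wedge (cup) product; combined with the intertwining of $\Delta$ this yields the asserted isomorphism of Batalin--Vilkovisky algebras.

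The heart of the matter, and the step I expect to be the main obstacle, is thus the identity $K'\circ d=d^*\circ K'$, i.e. upgrading the chain isomorphism $\Phi\colon\Omega^\bullet(A)\to\mathfrak X^\bullet_{A^!}(A^{\ac})$ from the proof of Proposition \ref{prop:KoszulforPoisson} from a chain map for the Poisson boundaries to an isomorphism of \emph{mixed} complexes $(\mathrm{CP}^0_\bullet(A,A_\nu),\partial_\nu,d)\cong(\mathrm{CP}^\bullet_0(A^!,A^{\ac}_{\nu^!}),\partial_{\nu^!},d^*)$. The $\partial_\nu\leftrightarrow\partial_{\nu^!}$ compatibility is already contained in Theorem \ref{thm:Koszul+PDforPoisson} (using Proposition \ref{prop:isoofmodular} for the twistings); what remains is to check on generators that $\Phi$, which acts by $dx_i\mapsto\xi_i^*$ and \eqref{eq:correspond}, carries $d$ to the dual de Rham differential $d^*$ of \eqref{diag:unimodularPoisson222}. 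This is a direct computation: both $d$ and $d^*$ are determined by their action on the respective generators, and the correspondence $dx_i\leftrightarrow\xi_i^*$, $x_i\leftrightarrow\partial/\partial\xi_i$ is exactly the duality under which they are adjoint.

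Finally, the gravity statement is formal once the mixed-complex isomorphism of the previous paragraph is in hand. It induces an isomorphism between the negative cyclic homology of $(\mathrm{CP}^0_\bullet(A,A_\nu),\partial_\nu,d)$ and the cyclic cohomology of $(\mathrm{CP}^\bullet_0(A^!,A^{\ac}_{\nu^!}),\partial_{\nu^!},d^*)$, that is $\mathrm{PC}^-_\bullet(A)\cong\mathrm{PC}^\bullet(A^!)$ in the sense of Definition \ref{def:negativecyclicPoisson}. The higher brackets of Lemma \ref{lemma:gravity} are built functorially from the maps $\pi_*$ and $\beta$ of the associated long exact sequence and from the product on $\partial_\nu$-homology transported through Poincar\'e duality; since $K$ and $K'$ intertwine the Batalin--Vilkovisky data by the second paragraph, and the mixed-complex isomorphism intertwines $\pi_*$ and $\beta$, the higher brackets correspond, giving the desired isomorphism of gravity algebras.
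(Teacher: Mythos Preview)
Your proposal is correct and follows the same route as the paper's (very terse) proof: semisimplicity from Proposition~\ref{prop:isoofmodular}, the Batalin--Vilkovisky isomorphism from the commutative square of Theorem~\ref{thm:Koszul+PDforPoisson}, and the gravity isomorphism from Lemma~\ref{lemma:gravity}. You go further than the paper by explicitly isolating and verifying the one step it leaves implicit---that the chain-level map $\Phi$ carries $d$ to $d^*$ and hence is an isomorphism of mixed complexes, which is exactly what is needed to match the BV operators $\Delta=\phi^{-1}d\phi$ and $\Delta^!=(\phi^!)^{-1}d^*\phi^!$ through the square.
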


\begin{proof}
The first half follows from Proposition \ref{prop:isoofmodular}.
The Batalin-Vilkovisky algebra isomorphism follows from
Theorem \ref{thm:Koszul+PDforPoisson}.
\end{proof}

The following result is proved in \cite[Theorem 1.1]{Liu}:

\begin{lemma}\label{lemma:Koszul+BValg}
Let $A$ be a Koszul AS-regular algebra.
Let $A^!$ be its Koszul dual algebra.
If the Nakayama automorphism $\sigma$ is diagonalizable, then
so is its Koszul dual $\sigma^!$.
In this case, we have
$$
\mathrm{HH}^\bullet(A)\cong\mathrm{HH}^\bullet(A^!)
$$
as Batalin-Vilkovisky algebras.
\end{lemma}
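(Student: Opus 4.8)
The plan is to follow the same strategy as in the proof of Lemma \ref{lemma:Koszul+BVPoisson}, replacing the Poisson ingredients by their associative analogues. First I would establish the semisimplicity transfer. By Proposition \ref{prop:ASregularFrob} the Nakayama automorphism $\sigma$ of $A$ and the Nakayama automorphism $\sigma^!$ of $A^!$ are Koszul dual to each other; concretely, if $\sigma$ acts on the space $V$ of degree-one generators by a matrix $M$, then $\sigma^!$ acts on the dual generators by (up to transpose and inverse) $M$, so an eigenbasis for $\sigma$ on $V$ produces an eigenbasis for $\sigma^!$ on $V^*$. Hence diagonalizability passes from $\sigma$ to $\sigma^!$, which is the first assertion.

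For the Batalin--Vilkovisky isomorphism, the key inputs are the two twisted Poincar\'e dualities together with Theorem \ref{thm:Koszul+PDforassoc}. Since $\sigma$ is semisimple, Kowalzig and Krahmer \cite{KK} endow $\mathrm{HH}^\bullet(A)$ with a Batalin--Vilkovisky structure whose operator $\Delta$ is obtained by transporting Connes' operator $\mathrm B$ along the twisted duality $\mathrm{HH}^\bullet(A)\cong\mathrm{HH}_{n-\bullet}(A,A_\sigma)$ of Theorem \ref{thm:BZ}; dually, since $\sigma^!$ is semisimple, Lambre, Zhou and Zimmermann \cite{LZZ} endow $\mathrm{HH}^\bullet(A^!)$ with a Batalin--Vilkovisky structure built from the duality $\mathrm{HH}^\bullet(A^!)\cong\mathrm{HH}^{\bullet-n}(A^!,A^\ac_{\sigma^!})$ of Theorem \ref{thm:LZZ}. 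The cup-product algebra isomorphism $\mathrm{HH}^\bullet(A)\cong\mathrm{HH}^\bullet(A^!)$ is the left vertical arrow of \eqref{eq:HHtoHHtwisted}, and I would upgrade it to a Batalin--Vilkovisky isomorphism by chasing that diagram: the right vertical arrow identifies $\mathrm{HH}_{n-\bullet}(A,A_\sigma)$ with $\mathrm{HH}^{\bullet-n}(A^!,A^\ac_{\sigma^!})$ compatibly with $\mathrm B$, so the two transported operators $\Delta$ agree under the horizontal Poincar\'e dualities. This is exactly what was recorded by the second author in \cite{Liu}, Theorem \ref{thm:Koszul+PDforassoc} being its underlying statement.

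The gravity algebra isomorphism is then a formal consequence. Both $\mathrm{HC}^-_\bullet(A)$ and $\mathrm{HC}^\bullet(A^!)$ are computed from the respective mixed (co)complexes supplied by the semisimple hypothesis, and by Lemma \ref{lemma:gravity} their gravity structures are functorially induced from the Batalin--Vilkovisky structures on $\mathrm{HH}^\bullet$ with $\mathrm B$ generating the Gerstenhaber bracket. Since the Batalin--Vilkovisky isomorphism just produced is by construction compatible with $\mathrm B$, applying Lemma \ref{lemma:gravity} on the two sides and using naturality of the sequence relating $\mathrm{HC}^-$ and $\mathrm{HH}$ yields the desired isomorphism of gravity algebras.

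The main obstacle will be the compatibility of Connes' cyclic operator $\mathrm B$ under the Koszul duality identification of the two Hochschild homologies. Unlike the cup product or the duality pairing, $\mathrm B$ is a cyclic datum, and one must verify at the chain level that the Koszul duality map intertwines $\mathrm B$ on $\mathrm{CH}_\bullet(A,A_\sigma)$ with $\mathrm B$ on the dual cochain complex of $A^!$; only this guarantees that the transported operators $\Delta$ match and that Lemma \ref{lemma:gravity} produces matching gravity brackets. This delicate point is precisely the one handled in \cite{Liu}, and everything else in the argument is bookkeeping around the commutative square \eqref{eq:HHtoHHtwisted}.
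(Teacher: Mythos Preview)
Your proposal is correct and follows essentially the same route as the paper: the Batalin--Vilkovisky isomorphism is deferred to \cite{Liu} (the paper simply cites \cite[Theorem 1.1]{Liu} without unpacking the argument as you do), and the gravity isomorphism is then obtained by combining that with Lemma~\ref{lemma:gravity}. Your additional remarks on the semisimplicity transfer and on the compatibility of Connes' $\mathrm B$ under Koszul duality are accurate elaborations of what \cite{Liu} contains, but the overall strategy is identical.
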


We now reach to the proof of the following two theorems, which supersede
the results obtained in \cite{CCEY,CEL} for unimodular Poisson algebras.

\begin{theorem}\label{thm:BViso}
Suppose $A=\mathbb R[x_1,\cdots, x_n]$.
For a quadratic Poisson structure on $A[\![\hbar]\!]$ with diagonalizable
modular vector field, the following
\begin{equation}\label{diag:commBV}
\begin{split}
\xymatrixcolsep{3pc}
\xymatrix
{
\mathrm{HP}^\bullet(A[\![\hbar]\!])\ar[r]^-{\cong}\ar[d]^-{\cong}&\mathrm{HP}^\bullet(A^![\![\hbar]\!])\ar[d]^-{\cong}\\
\mathrm{HH}^\bullet(A_\hbar)\ar[r]^-{\cong}&\mathrm{HH}^\bullet(A^!_\hbar)
}
\end{split}
\end{equation}
is a commutative diagram of isomorphisms of Batalin-Vilkovisky algebras.
\end{theorem}

\begin{proof}
Combine the left side diagram of
\eqref{maindiagram}
with the first halves of Theorems \ref{thm:DQ+BVforPoisson}
and \ref{thm:DQ+BVforFrobPoisson} and Lemmas
\ref{lemma:Koszul+BVPoisson} and \ref{lemma:Koszul+BValg}.
\end{proof}

\subsection{The gravity algebra structure}\label{subsect:gravity}

In this last subsection, we briefly discuss the gravity algebra
structure on the negative cyclic homology of Poisson algebras
with diagonalizable modular vector field.

The notion of gravity algebra
was introduced by Getzler in
\cite{Getzler2}; it
plays an important role
in the study of equivariant topological conformal field theory.
In \cite{CEL}, the first two authors of the current paper together with Eshmatov
showed that the negative
cyclic Poisson homology of a {\it unimodular} Poisson algebra
has a gravity algebra structure.
In what follows we generalize the result of \cite{CEL}
to the case of Poisson algebras whose modular vector field is diagonalizable.

\begin{definition}[Getzler \cite{Getzler2}]
Suppose $V$ is a (graded) vector space over $k$.
A {\it gravity algebra structure} on $V$ consists of a sequence of
(graded) skew symmetric operators
(called the {\it higher Lie brackets})
\[
\{x_1,\ldots, x_n\}: V^{\otimes n}\rightarrow V, n=2,3,\cdots
\]
such that
\begin{eqnarray*}
\sum_{1\leq i<j\leq n}(-1)^{\epsilon_{ij}}\{\{x_i,x_j\},
x_1\ldots,\widehat{x_i},\ldots,\widehat{x_j},\ldots, x_n,y_1,\ldots, y_m\}\\
=\left\{\begin{array}{cl}
\{\{x_1\ldots, x_n\},y_1,\ldots, y_m\}, &\mbox{if}\; m>0,\\
0,&\mbox{otherwise,}
\end{array}\right.
\end{eqnarray*}
where $\epsilon_{ij}=(\vert x_i\vert +1)(\vert x_1\vert+\ldots+
\vert x_{i-1}\vert+i-1)+(\vert x_j\vert +1)(\vert x_1\vert
+\ldots+\vert x_{j-1}\vert+j-1)-(\vert x_i\vert +1)(\vert x_j\vert +1)$.
\end{definition}

Now suppose $(\mathrm C_\bullet, b, B)$ is a mixed complex.
Denote by $\mathrm{CC}^-_{\bullet}(\mathrm C_\bullet)$
the negative cyclic complex of $\mathrm C_\bullet$.
Then we have a short exact sequence
$$0\longrightarrow
u\cdot \mathrm{CC}_{\bullet+2}^{-} (\mathrm C_\bullet)
\stackrel{\iota}\longrightarrow
\mathrm{CC}_\bullet^{-} (\mathrm C_\bullet)
\stackrel{\pi}\longrightarrow
\mathrm{C}_\bullet \longrightarrow 0,$$
where
$\iota: u\cdot \mathrm{CC}_{\bullet+2}^{-} (\mathrm C_\bullet)
\to
\mathrm{CC}_\bullet^{-} (\mathrm C_\bullet)
$
is the embedding
and
$$
\pi:\mathrm{CC}_\bullet^{-}(\mathrm{C}_{\bullet}) \to
\mathrm{C}_\bullet ,\quad
\displaystyle\sum_i x_i\cdot u^i\mapsto x_0
$$
is the projection.
It induces functorially a long exact sequence
$$
\cdots\longrightarrow
\mathrm{HC}_{\bullet+2}^{-} (\mathrm C_\bullet)
\longrightarrow
\mathrm{HC}_\bullet^{-} (\mathrm C_\bullet)
\stackrel{\pi_*}\longrightarrow
\mathrm{HH}_\bullet (\mathrm C_\bullet)
\stackrel{\beta}\longrightarrow
\mathrm{HC}_{\bullet+1}^{-} (\mathrm C_\bullet)
\longrightarrow\cdots,
$$
where $\mathrm{HH}_\bullet(\mathrm C_\bullet)$
and $\mathrm{HC}^-_\bullet(\mathrm C_\bullet)$
are the Hochschild and negative cyclic homology of $\mathrm C_\bullet$ respectively.
(Recall that $\mathrm{HH}_\bullet(\mathrm C_\bullet)$ is just the $b$-homology of
$\mathrm C_\bullet$.)
The main result obtained in \cite{CEL} is the following.

\begin{lemma}[\cite{CEL} Theorem 1.1]\label{lemma:gravity}
Suppose $(\mathrm C_\bullet, b, B)$ is a mixed complex.
If $\mathrm{HH}_\bullet(\mathrm C_\bullet)$
has a Batalin-Vilkovisky algebra structure
such that $B$ is the generator of the Gerstenhaber bracket, then
the following sequence of maps
$$\begin{array}{cccl}
\{-,\cdots,-\}:&(\mathrm{HC}^{-}_\bullet(\mathrm C_\bullet))^{\otimes n}
&\longrightarrow& \mathrm{HC}^{-}_\bullet(\mathrm C_\bullet)\\
&(x_1,\cdots, x_n)&\longmapsto&(-1)^{\varepsilon_n}\beta\big(\pi_*(x_1)
\bullet \pi_*(x_2)\bullet\cdots\bullet
\pi_*(x_n)\big),\quad n=2, 3,\cdots
\end{array}
$$
where $\varepsilon_n=(n-1)|x_1|+(n-2)|x_2|+\cdots+|x_{n-1}|$
and $\bullet$ is the product on the Hochschild homology (coming from
the Batalin-Vilkovisky algebra structure),
gives on $\mathrm{HC}^{-}_\bullet(\mathrm C_\bullet)$ a gravity algebra structure.
\end{lemma}

In what follows, we shall also study the cyclic cohomology of an associative
and Poisson algebra. Suppose $(\mathrm C^\bullet, \delta, B^*)$ is a mixed complex
with degrees of $\delta$ and $B^*$ being $1$ and $-1$ respectively;
in order to distinguish,
we would call this type of mixed complex in what follows {\it mixed cochain complex},
and call the usual mixed complex, like $(\mathrm C_\bullet, b, B)$ above, {\it
mixed chain complex}.
By our convention, the cyclic cohomology of a mixed cochain complex $(\mathrm C^\bullet, \delta, B^*)$,
denoted by
$\mathrm{HC}^\bullet(\mathrm C^\bullet)$,
is the {\it cohomology} of the negative cyclic complex
of the mixed chain complex obtained from $(\mathrm C^\bullet, \delta, B^*)$
by negating the gradings. Thus the cyclic cohomology
is essentially the same as the negative cyclic homology.

Now suppose $A$ is a Poisson algebra and respectively $A^!$
is a Frobenius Poisson
algebra, both with diagonalizable modular vector fields. In the previous section
we have shown that $(\mathrm{CP}_\bullet^0(A, A_\nu), \partial_\nu, d)$
is a mixed chain complex
and $(\mathrm{CP}^\bullet_0(A^!, A_{\nu^!}^{\ac}), \partial_{\nu^!}, d^*)$
is a mixed cochain complex.

\begin{definition}\label{def:negativecyclicPoisson}
Suppose $A$ is a Poisson algebra
with diagonalizable modular vector field $\nu$.
The negative cyclic homology
of the mixed complex
$$
(\mathrm{CP}_\bullet^0(A, A_\nu), \partial_\nu, d)
$$
is called the {\it negative cyclic Poisson homology}
of $A$, and is denoted by $\mathrm{PC}^{-}_{\bullet}(A)$.
Similarly,
suppose $A^!$
is a Frobenius Poisson
algebra with
diagonalizable modular vector field $\nu^!$.
The cyclic cohomology of
$(\mathrm{CP}^\bullet_0(A^!, A_{\nu^!}^{\ac}),
\partial_{\nu^!}, d^*)$
is called the {\it cyclic Poisson cohomology} of
$A^!$, and is denoted by $\mathrm{PC}^\bullet(A^!)$.
\end{definition}

\begin{theorem}\label{thm:gravityiso}
Suppose $A=\mathbb R[x_1,\cdots, x_n]$.
For a quadratic Poisson structure on $A[\![\hbar]\!]$ with diagonalizable modular vector field,
the following diagram
\begin{equation}\label{diag:commgravity}
\begin{split}
\xymatrixcolsep{3pc}
\xymatrix{
\mathrm{PC}^{-}_\bullet(A[\![\hbar]\!])\ar[r]^-{\cong}\ar[d]^-{\cong}&\mathrm{PC}^\bullet(A^![\![\hbar]\!])\ar[d]^-{\cong}\\
\mathrm{HC}^{-}_\bullet(A_\hbar)\ar[r]^-{\cong}&\mathrm{HC}^\bullet(A^!_\hbar)
}
\end{split}
\end{equation}
commutes, where $\mathrm{HC}_\bullet^{-}(A_\hbar)$
and $\mathrm{HC}^\bullet(A_\hbar^!)$ are the usual negative cyclic homology
and the cyclic cohomology of $A_\hbar$ and $A_\hbar^!$ resepctively.
Moreover, after the degrees shifted down by $n$, the above is a commutative diagram of isomorphisms of gravity algebras.
\end{theorem}

\begin{proof}By transporting the Batalin-Vilkovisky algebra
structures in Theorem \ref{thm:BViso} to the right hand side
of the diagram \eqref{maindiagram},
the theorem then follows from Lemma \ref{lemma:gravity}.
\end{proof}

\end{document}